\DeclareRobustCommand{\qed}{%
  \ifmmode 
  \else \leavevmode\unskip\penalty9999 \hbox{}\nobreak\hfill
  \fi
  \quad\hbox{\qedsymbol}}
\newcommand{\openbox}{\leavevmode
  \hbox to.77778em{%
  \hfil\vrule
  \vbox to.675em{\hrule width.6em\vfil\hrule}%
  \vrule\hfil}}
\newcommand{\qedsymbol}{\openbox}
\newenvironment{proof}[1][\proofname]{\par
  \normalfont
  \topsep6\p@\@plus6\p@ \trivlist
  \item[\hskip\labelsep\itshape
    #1.]\ignorespaces
}{%
  \qed\endtrivlist
}
\newcommand{\proofname}{Proof}
\numberwithin{equation}{section}
\def \b1{\bf{1}}
\def \I{\mathbb{I}}
\def \N{\mathbb{N}}
\def \R{\mathbb{R}}
\def \E{\mathbb{E}}
\def \F{\mathbb{F}}
\def \P{\mathbb{P}}
\def \S{\mathbb{S}}
\def \W{\mathbb{W}}
\def \bd{\mathbb{d}}
\def \bd{\boldsymbol{d}} 
\def \bV{\boldsymbol{V}}
\def \balpha{\boldsymbol{\alpha}} 
\def \bbeta{\boldsymbol{\beta}}
\def \mra{\mathrm{a}} 
\def \mrx{\mathrm{x}}
\def \mry{\mathrm{y}}
\def \mrz{\mathrm{z}}
\def \d{\mathrm{d}}
\def\esssup_#1{\underset{#1}{\mathrm{ess\,sup\, }}}
\def\argmin_#1{\underset{#1}{\mathrm{argmin\, }}}
\def\argmax_#1{\underset{#1}{\mathrm{argmax\, }}}
\def\dmu{\frac{\delta}{\delta m}}
\def\dm#1{\frac{\delta}{\delta m}}
\def \Ac{{\cal A}}
\def \Bc{{\cal B}}
\def \Cc{{\cal C}}
\def \Ec{{\cal E}}
\def \Fc{{\cal F}}
\def \Hc{{\cal H}}
\def \Ic{{\cal I}}
\def \Lc{{\cal L}}
\def \Pc{{\cal P}}
\def \Nc{{\cal N}}
\def \Sc{{\cal S}}
\def \Uc{{\cal U}}
\def \Wc{{\cal W}}
\def \Ik{\mathfrak{I}}
\def \eps{\varepsilon}
\def\reff#1{{\rm(\ref{#1})}}
\def\bX{{\bf X}}
\def\bY{{\bf Y}}
\def\bZ{{\bf Z}}
\def\bd{{\bf d}}
\def \mra{\mathrm{a}}
\def \d{\mathrm{d}}
\def \mry{\mathrm{y}} 
\def \mrz{\mathrm{z}} 
\def \mrv{\mathrm{v}}
\def\beqs{\begin{eqnarray*}}
\def\enqs{\end{eqnarray*}}
\def\beq{\begin{eqnarray}}
\def\enq{\end{eqnarray}}
\newcommand{\red}[1]{\textcolor{red}{#1}}
\def\red#1{{\color{red}#1}}
\newtheorem{Theorem}{Theorem}[section]
\newtheorem{Definition}{Definition}[section]
\newtheorem{Proposition}{Proposition}[section]
\newtheorem{Assumption}{Assumption}[section]
\newtheorem{Lemma}{Lemma}[section]
\newtheorem{Corollary}{Corollary}[section]
\newtheorem{Remark}{Remark}[section]
\newtheorem{Example}{Example}[section]
\numberwithin{equation}{section}
\title{Stochastic maximum principle for optimal control problem of non exchangeable mean field systems}
\author{Idris Kharroubi \footnote{LPSM, Sorbonne University and  Universit\'e Paris Cit\'e, \sf kharroubii at lpsm.paris. 
The work of this author is  partially supported by Agence Nationale de la Recherche (ReLISCoP grant ANR-21-CE40-0001). } 
\and 
Samy Mekkaoui\footnote{Ecole Polytechnique, CMAP, \sf samy.mekkaoui at polytechnique.edu; This author is supported by the S-G Chair "Risques Financiers", and the "Deep Finance and Statistics" Qube-RT Chair.}
\and 
Huy\^en Pham\footnote{Ecole Polytechnique, CMAP, \sf huyen.pham at polytechnique.edu; This author is supported by
the BNP-PAR Chair “Futures of Quantitative Finance", the Chair “Risques Financiers", and by FiME,
Laboratoire de Finance des Marchés de l’Energie, and the “Finance and Sustainable Development” EDF -
CACIB Chair.}}
\date{}
\begin{document}

\maketitle

\begin{abstract}
We study the Pontryagin maximum principle by deriving necessary and sufficient conditions for a class of optimal control problems arising in non exchangeable mean field systems, where agents interact through heterogeneous and asymmetric couplings. Our analysis leads to a collection of 
forward-backward stochastic differential equations (FBSDE)  of non exchangeable mean field type. Under suitable assumptions, we establish the solvability of this system. As an illustration, we consider the linear-quadratic case, where the optimal control is characterized by an infinite dimensional system of Riccati equations.  
\end{abstract}

\vspace{5mm}

\noindent {\bf MSC Classification}: 49N80; 60K35; 93E20.   

\vspace{5mm}

\noindent {\bf Key words}: Pontryagin maximum principle;  mean-field FBSDE; heterogeneous interactions; non-exchangeable systems; linear-quadratic graphon optimal control.
\newpage
\tableofcontents
\section{Introduction}

The study of large population and complex systems is a central question in mathematical modeling with wide-ranging  applications in modern society, including social networks, power grid infrastructures, financial markets, and lightning networks. The analysis of such systems has given rise to models in which the representative state of an agent depends on its own distribution, leading to the class of McKean-Vlasov (MKV) equations. These equations have  been extensively studied  in the context of mean-field game (MFG) or mean-field control (MFC). We refer the reader to the seminal lectures of P.L. Lions \cite{lasry2007mean} and the comprehensive monographs by Carmona and Delarue \cite{carmona_probabilistic_2018,carmona_probabilistic_2018-1}, which introduce the foundational mathematical tools for these problems, such as  It\^o's calculus along flow of probability measures, stochastic maximum principle, MKV-type FBSDEs, and the Master Bellman equation in the Wasserstein space.

In this paper, motivated by the modeling of more complex networks, we focus on large-scale systems of agents with  asymmetric and heterogeneous interactions, leading to non exchangeable systems. Recent years have witnessed a growing interest in such models, notably via the graphon framework introduced by  Lovász in \cite{lovasz_large_2010}, and later explored in the context of heterogeneous mean-field systems in \cite{coppini2024nonlinear,bayraktar2023graphon,jabin2025mean,lacker2023label}. 
More recently, in the setting of optimal control, new general frameworks have emerged where the interactions are modelled via the full collection of marginal laws of all agents without necessarily assuming a specific graphon-type structure. 
This generalization leads to the study of a new class coupled systems with non-exchangeable interactions 
\begin{align}\label{eq : dynamics X}
\d X_t^u &= \;  b(u,X_t^u,(\P_{X_t^v})_{v \in I} ,\alpha_t^u) \d t  + \sigma(u,X_t^u,(\P_{X_t^v})_{v \in I}, \alpha_t^u) \d W_t^u, \quad 
u\in I = [0,1], \; t \in [0,T].  
\end{align}
In the context of MFC, we are then looking for a collection of control processes $\alpha = (\alpha^u)_{u \in I}$ to minimize the following cost functional over a finite horizon $T$ 
\begin{equation}\label{eq : functional J}
J(\alpha) = \int_{I} \E \Big[\int_{0}^{T} f(u,X_t^u,(\P_{X_t^v})_{v},\alpha_t^u) \d t + g \big(u,X_T^u, (\P_{X_T^v})_{v} \big) \Big] \d u.
\end{equation}
Such control problem has recently been studied by a dynamic programming approach in  \cite{de2024mean}. 
This leads to  a master equation on the space $L^2(\Pc_2(\R^d))$, consisting of collections $(\mu^u)_{u \in I}$ of square integrable probability measures on $\R^d$ s.t $\int_{I}\int_{\R^d} |x|^2 \mu^u(dx) du < + \infty$. 
Since the Pontryagin maximum principle provides a natural and complementary method for analyzing stochastic control problems \cite{peng_general_1990,yong_stochastic_1999}, this motivated our investigation of its formulation in the 
context of non-exchangeable mean field systems.  Due to the heterogeneity among agents, encoded in the collection 
of SDEs \eqref{eq : dynamics X}, we are naturally led, to study a family of FBSDEs indexed by $u$ $\in$ $I$, in contrast to classical mean-field models that can be summed up into a single FBSDE (see \cite{carmona_forwardbackward_2015}). 
Such a family of FBSDEs has been considered by \cite{bayraktar2023propagation} in the context of linear graphon-based interactions. 
Very recently, and independently of our current work, the paper \cite{cao_probabilistic_2025} has extended 
this study from a control perspective by Pontryagin maximum principle in the case of nonlinear graphon-based interactions, i.e., the coefficients $b,\sigma$ depend on the collection of laws only through a graphon-weighted sum.  In this work, we address the general case of non exchangeable systems as in 
\eqref{eq : dynamics X}.

A particularly notable subclass of optimal control problems within the Pontryagin maximum principle framework arises when the controlled dynamics is linear and the cost functional is quadratic. In the classical mean-field setting, existence and uniqueness results for the corresponding FBSDE systems are well established (see \cite{carmona_forwardbackward_2015}). This leads to the so-called  linear-quadratic mean-field (LQ-MF) control framework, which has been widely studied due to its analytical tractability and broad applicability  \cite{basei2019weak,Yong2013,carmona_mean_2015}. More recently, the work \cite{de2025linear} extended this theory to the linear-quadratic non-exchangeable mean-field (LQ-NEMF) setting, where heterogeneity in interactions is explicitly modeled. This generalization introduces a new class of infinite-dimensional Riccati equations that capture the structure of non-exchangeable interactions and opens the door to novel applications.


\paragraph{Our work and contributions.}
In the spirit of classical mean-field control, we formulate the Pontryagin maximum principle for non-exchangeable systems of interacting agents, where the state process $X=(X^u)_{u \in I}$ evolves according to the dynamics 
\eqref{eq : dynamics X}, and the optimality criterion is given by the cost functional \eqref{eq : functional J}.   
This leads us to introduce the associated Hamiltonian function
\begin{align}\label{eq : Hamiltonian def}
H(u,x,(\mu^v)_v,y,z,a) &:= \;  b(u,x,(\mu^v)_v,a) \cdot y + \sigma(u,x,(\mu^v)_v , a):z + f(u,x,(\mu^v)_v,a),
\end{align}
where $(Y,Z) = (Y^u,Z^u)_{u \in I}$ are solution to the following collection of  adjoint equations

\begin{equation}\label{eq : adjoint equations Intro}
        \left\{\begin{aligned} 
        \d Y_t^u &= \; - \partial_{x} H(u,X_t^u,(\P_{X_t^v})_{v},Y_t^u,Z_t^u,\alpha_t^u) dt  + Z_t^u 
        \d W_t^u \\
        & \qquad - \;  \int_{I} \mathbb{\tilde{E}}\Big[ \partial \frac{\delta }{\delta m}H(\tilde{u},\tilde{X}_t^{\tilde{u}},(\P_{X_t^v}),\tilde{Y}_t^{\tilde{u}},\tilde{Z}_t^{\tilde{u}},\tilde{\alpha}_t^{\tilde{u}})(u,X_t^u)\Big] \d\tilde{u} \d t , \\
        Y_T^u &= \; \partial_{x} g \big(u,X_T^u, (\P_{X_T^v})_{v} \big) + \int_{I}  \mathbb{\tilde{E}} \Big[\partial \frac{\delta }{\delta m}g \big(\tilde{u},\tilde{X}_T^{\tilde{u}},(\mathbb{P}_{X_T^v})_{v}\big)(u,X_T^{u}) \Big] \d\tilde{u}, \\
        \end{aligned}
        \right.
\end{equation}
where $(\tilde{X},\tilde{Y},\tilde{Z},\tilde{\alpha})$ is an independent copy of $(X,Y,Z,\alpha)$ defined on another probability space $(\tilde{\Omega},\tilde{\Fc},\tilde{\P})$.
Here $\frac{\delta}{\delta m}$ and $\partial \frac{\delta}{\delta m}$ stand for the linear functional derivative and his gradient and will be precised in the sequel.

Our objective is to characterize the optimal control via a pointwise minimization of the Hamiltonian $H$. To this end, we introduce a suitable notion of convexity for functions defined on  the space $L^2 (\Pc_2(\R^d) )$ and assume this property for both $H$  and the terminal cost $g$. Under these assumptions, we establish a necessary and sufficient condition for a control $\alpha=(\alpha^u)_{u \in I}$ to be optimal. The characterization, in turn, leads to the study of a collection of FBSDEs of  the following form 
\begin{equation}\label{eq : FBSDE equations}
        \left\{\begin{aligned} 
        \d X_t^u &= \;   b(u,X_t^u,(\P_{X_t^v})_{v}, \hat{\alpha}_t^u ) \d t 
        + \sigma(u,X_t^u,(\P_{X_t^v})_{v}, \hat{\alpha}_t^u)  \d W_t^u, \\
        dY_t^u &= \;  -\bigg[ \partial_{x} H(u,X_t^u,(\mathbb{P}_{X_t^v})_{v},Y_t^u,Z_t^u,\hat{\alpha}_t^u)   + \int_{I} \mathbb{\tilde{E}}\Big[ \partial \frac{\delta }{\delta m}H(\tilde{u},\tilde{X}_t^{\tilde{u}},(\mathbb{P}_{X_t^v})_{v},\tilde{Y}_t^{\tilde{u}},\tilde{Z}_t^{\tilde{u}},\tilde{\hat{\alpha}}_t^{\tilde{u}})(u,X_t^u)\Big] \d \tilde{u} \bigg] \d t  \\
        &\qquad + \;  Z_t^u \d W_t^u, \\
        X_0^u &= \; \xi^u , \\
        Y_T^u &= \;  \partial_x g(u,X_T^u, (\mathbb{P}_{X_T^v})_{v}) + \int_{I}\mathbb{\tilde{E}} \Big[\partial \frac{\delta }{\delta m}g (\tilde{u}, \tilde{X}_T^{\tilde{u}},(\mathbb{P}_{X_T^v})_{v})(u,X_T^u) \Big] \d\tilde{u},
        \end{aligned}
        \right.
\end{equation}
where $\hat{\alpha}_t^u = \hat{\mra}(u,X_t^u,(\P_{X_t^v})_{v}, Y_t^u,Z_t^u)$ for a Borel measurable function 
$\hat{\mra}$ defined on $I \times  \R^d \times L^2 (\Pc_2(\R^d)) \times \R^d \times \R^{d \times n}$. 
Under standard assumptions on the model coefficients, and by introducing a suitable  solution space for the collection of FBSDE \eqref{eq : FBSDE equations}, we establish the existence and uniqueness of solutions. We would like to emphasize that the  well posedness of the control problem and the related family of FBSDEs require a suitable definition of admissible controls, specifically in terms of measurability with respect to the label. 
In particular, we pay a careful attention to check that the derived optimal controls satisfy the admissibility condition. Finally, we illustrate our methodology in the linear-quadratic setting and prove the consistency of our stochastic maximum principle  with the dynamic programming approach (see \cite{de2025linear}).


\paragraph{Outline of the paper.}
The structure of the paper is as follows. In Section 2, we formulate the mean-field control problem for non-exchangeable systems. Section 3 introduces key preliminaries on functions defined over the space $L^2 (\Pc_2(\R^d))$, which are essential for the subsequent analysis. In Section 4, we derive necessary and sufficient optimality conditions using the stochastic maximum principle, under suitable regularity and convexity assumptions. This leads to the study of a collection of FBSDE of non-exchangeable mean-field type. Section 5 is devoted to establishing existence and uniqueness results for this system of FBSDE under appropriate technical conditions. Finally, in Section 6, we illustrate our approach in the linear-quadratic setting.

\paragraph{Notations.}
\begin{itemize}
\item We denote by $x\cdot y$ the scalar product between vectors $x,y$, and by $A:B$ $=$ ${\rm tr}(A B^{\top})$ the inner product of two matrices $A,B$ with compatible dimensions, where $B^{\top}$ is the transpose matrix of $B$. The Euclidean norm of a vector will be denoted by $ |\cdot |$.
\item  Throughout the paper, $T>0$ denotes a fixed time horizon. For any integer $\ell\geq1$ and $[a,b]\subset [0,T]$ the space $C([a,b];\R^\ell)$ of continuous functions $[a,b]\to \R^\ell$ will be denoted simply $C_{[a,b]}^\ell$. It is endowed with the supremum norm and the corresponding distance and Borel $\sigma$-algebra. For $w\in C^\ell_{[a,b]}$ and $[a',b']\subset [a,b]$,  $w_{[a',b']}\in C^\ell_{[a',b']}$ stands for the restriction of $w$ to $[a',b']$.    
We also denote by
$\mathbb{W}_T$ the Wiener measure on $C^\ell_{[0,T]}$.
\item  For $a,b,c\in[0,T]$ such that $a\leq b\leq c$, $\hat{w}\in C^d_{[a,b]}$ and $\check{w}\in C^d_{[b,c]}$  we define the concatenation $\hat{w}\oplus \check{w} \in C^d_{[a,c]}$ by the formula
\begin{equation}\label{DefConc}
(\hat{w}\oplus \check{w})(s)=\left\{ \begin{array}{ll}
\hat{w}(s)& \text{ if } s\in [a,b],
\\
\hat{w}(b)+\check{w}(s)-\check{w}(b)& \text{ if } s\in [b,c].
\end{array}\right.
\end{equation}
\item  For any generic Polish space $E$ with a complete metric $d$,
we denote by $\Pc_2(E)$ the Wasserstein space of Borel probability measures $\rho$ on $E$ satisfying 
$\int_E d(x,0)^2\,\rho(\d x)<\infty$, 
where $0$ denotes an arbitrary fixed element in $E$ (the origin when $E$ is a vector space).  $\Pc_2(E)$ is
endowed with the $2$-Wasserstein distance $\Wc_2$ corresponding to the quadratic transport cost $(x,y)\mapsto d(x,y)^2$. As a measurable space, $\Pc_2(E)$ is 
endowed with the corresponding Borel $\sigma$-algebra.  
\item Given a random variable $Y$ on a probability space $(\Omega,\Fc,\P)$, we denote by $\P_{Y}$ the law of $Y$ under $\P$. We shall also use the notation  $\Lc(Y)$ for the law of $Y$ (under $\P$) when there is no ambiguity.
\item [$\bullet$] We denote by $I$ the interval $(0,1)$ and by $L^2(I; \Sc^{d})$ the set of collection of processes $Y=(Y^u)_{u \in I}$ such that $Y^u$ is $\F^u$-adapted valued in $\R^d$, continuous and 
\begin{align}
    \lVert Y \rVert_{L^2(I;\Sc^d)}:= \Big(\int_{I} \E \big[\underset{0 \leq t \leq T}{\text{ sup }} |Y_t^u|^2 \big] \d u\Big)^{\frac{1}{2}} < + \infty .
\end{align}
Similarly, we denote by $L^2(I; \Hc^{2,d \times n})$ the set of collection of processes $Z=(Z^u)_{u \in I}$ such that $Z^u$ is $\F^u$-adapted valued in $\R^{d \times n}$ and 
\begin{align}
    \lVert Z \rVert_{L^2(I;\Hc^{2, d \times n})} := \Big(\int_{I} \E \big[\int_{0}^{T} | Z_t^u|^2 \d t \big] \d u \Big)^{\frac{1}{2}} < + \infty .
\end{align}
\item [$\bullet$] For a Banach space $(E, |.|_{E})$ endowed with $\Bc(E)$, we denote by $L^2(I;E)$ and  $L^{\infty}(I;E)$ the set of elements $\phi = (\phi^u)_u$ s.t $u \mapsto \phi^u \in E$ is measurable and $\int_{I} |\phi^u|^2_{E} du <  + \infty$, resp $\underset{u \in I}{\text{ sup }}  |\phi^u|_{E} < + \infty$. We also denote by $L^2(I \times I;E)$, resp $L^{\infty}(I \times I; E)$ the set of measurable functions $G : I \times I \to E $, s.t. $\int_{I} \int_{I} |G(u,v)|^2_{E} du dv < + \infty$, resp $\underset{u,v \in I \times I}{\text{ sup }} |G(u,v)|_{E} < + \infty$. 

\item [$\bullet$] For any integer $q \in \N^*$, we denote by  $\S^q$ (resp $\S^q_{+}$, $\S^q_{> +}$) the set of $q$-dimensionnal symmetric  (resp positive semi-definite, positive definite) matrices.

\end{itemize}


\section{Control of mean field non exchangeable SDE
} \label{secMFC}

\subsection{Heterogeneous population}

We recall that $I$ is the interval $(0,1)$. 
The interval $I$ represents the continuum of heterogeneous agents and is endowed with its Borel $\sigma$ algebra together with Lebesgue measure. To take into account  the heterogenity of a population we shall  consider mappings $u \mapsto \mu^u$ from $I$ to $\Pc_2(E)$. 

To check measurability of such maps, we will use the fact that the Borel $\sigma$-algebra in $\Pc_2(E)$ coïncides with the trace of the Borel $\sigma$-algebra corresponding to the weak topology, and that measurability holds iif the maps of the form 

\begin{equation}\label{eq : Measurability condition}
    u \in I \mapsto \int_{E} \Phi(x) \mu^u(\d x) \in \R, 
\end{equation}
are measurable for every choice of bounded continuous functions $\Phi : E \to \R$.

The space $L^2 \big(\Pc_2(E) \big)$ consists of elements $\mu=(\mu^u)_{u \in I}$ that are measurable functions $I \to \Pc_2(E)$ satisfying 
\begin{equation}
    \int_{I} \int_{\R^d} d(x,0)^2 \mu^u(\d x) \d u  = \int_{I} W_2(\mu^u,\delta_{0})^2 \d u  < \infty,
\end{equation}
where $\delta_0$ denotes the Dirac mass at the fixed element $0$. Morever, the space $L^2 (\Pc_2(E))$ is endowed with the complete metric 
{\begin{equation}\label{eq : distance collection mesures}
    \bd(\mu,\nu) := \left(\int_{I} W_2(\mu^u,\nu^u)^2 \d u \right)^{1/2}, \quad \mu,\nu \in L^2\big(\Pc_2(E)\big).
\end{equation}
}

For a collection $\xi = (\xi^u)_{u \in I}$ of random variables defined on a probability space $(\Omega,\Fc,\P)$, we denote by $\P_{\xi^.}$ the collection of probability laws $(\P_{\xi^u})_{u \in I}$.

We denote by $A$ the set of control actions and we assume that it is a convex subset of $\R^m$ with $m \in \N^*$.


\subsection{Controlled mean field non exchangeable SDE}

Let $(\Omega,\Fc,\P)$ be a complete probability space. For every $u \in I$, we are given an $\R^n$-valued standard Brownian motion $W^u=(W^u_t)_{0 \leq t \leq T}$ and an independent random variable $U^u$ having uniform distribution in $(0,1)$. We assume that 
{$(W^u,U^u)$ and $(W^v,U^v)$ are independent for $u,v\in I$ with $u\neq v$}.
For every $u \in I$, we denote by $(\Fc_t^{W^u})_{t \in [0,T]}$  the natural filtration generated by $W^u$
 and by $\F^u =(\Fc_t^u)_{t \in [0,T]}$ the filtration given by  

\begin{align}
    \Fc_t^u := \Fc_t^{W_t^u} \vee \sigma(U^u) \vee \Nc,\quad t \in [0,T],
\end{align}
where $\Nc$ is the family of $\P$-null sets.

\noindent We fix drift and diffusion functions $b,\sigma:~I\times \R^d\times L^2(\Pc_2(\R^d) )\times A\rightarrow \R^d,\R^{d\times n}$, on which we make the following assumption.
\begin{Assumption}\label{assumptionzero}
The functions $b$ and $\sigma$ are Borel measurable. There exist constants $L\ge0$, $M\ge 0$ such that
\begin{itemize} 
\item[(i)] $$ |b(u,x,\mu,a)-b(u,x',\mu',a)| \le L\big(|x-x'|+ \bd(\mu,\mu') \big),$$
 \item[(ii)] 
 $$
|\sigma(u,x,\mu,a)-\sigma(u,x',\mu',a)|
 \le L\big(|x-x'|+ \bd(\mu,\mu') \big),
 $$
\item[(iii)] 
$$
|b(u,0,\delta_0,a)|+
|\sigma(u,0,\delta_0,a)|
 \le M\big( 1+|a|\big),$$
\end{itemize} 
 for every  $u\in I$, $x,x'\in\R^d$, $\mu,\mu'\in L^2 (\Pc_2(\R^d))$, $a\in A$.
\end{Assumption}


We consider  a class of mean-field non exchangeable controlled systems described by a collection of controlled state process $X = (X^u)_{u \in I}$ satisfying the following (coupled) SDEs
\begin{align}\label{eq : control process X}
    \left\{\begin{aligned}
        \d X_t^u &= b(u,X_t^u,\P_{X_t^.},\alpha_t^u) \d t + \sigma(u,X_t^u, \P_{X_t^.},\alpha_t^u) \d W_t^u \quad 0 \leq t \leq T, \\
        X_0^u &= \xi^u, u \in I.
    \end{aligned}\right.
\end{align}
The initial condition is given by a collection $\xi=(\xi^u)_{u \in I}$ of $\R^d$-valued random variable satisfying 

\vspace{2mm}

- there exists a Borel measurable function $\xi : I \times (0,1) \to \R^d$ such that
\begin{align} \label{condIC} 
\xi^u ~=~ \xi(u,U^u)\mbox{ for } u \in I, & \quad \mbox{ and } \quad   \int_{I} \mathbb{E}\big[|\xi^u|^2 \big] \d u ~< ~\infty. 
\end{align} 



\vspace{2mm}

\noindent The control is given by a collection $\alpha=(\alpha^u)_{u \in I}$ of $A$-valued random processes satisfying 

\vspace{2mm}

- there exists a Borel measurable function $\alpha : I \times [0,T] \times \Cc^n_{[0,T]} \times (0,1) \rightarrow A$, such that
\begin{align}
    \alpha_t^u = \alpha(u,t,W^u_{. \wedge t},U^u), \quad t \in [0,T],~ u \in I\;, & \;  \mbox{ and } \; 
    \int_{I} \int_{0}^{T} \mathbb{E}\big[|\alpha_t^u|^2 \big] \d t \d u < + \infty, 
\end{align}
where $W^u_{. \wedge t}$ is the path $s \mapsto W^u_{s \wedge t}, s \in [0,T]$. 
Such a control $\alpha$ is called admissible and we denote by $\Ac$ the class of all admissible controls.

\vspace{2mm}

We next give the precise definition of a solution to the non exchangeable mean field controlled SDE.
\begin{Definition}\label{def : solution SDE}
Fix an initial condition $\xi = (\xi^u)_{u \in I}$ satisfying \eqref{condIC} and an admissible control  $\alpha\in \Ac$.
We define a solution to equation \eqref{eq : control process X} as a collection 
$X=(X^u)_{u\in I}\in L^2(I; \Sc^{d})$ such that 
\begin{enumerate}[(i)]
\item the map
$u\mapsto \Lc(X^{u}, W^u, U^u)$
is Borel measurable from $I$ to $\Pc_2(C_{[0,T]}^d\times C_{[0,T]}^n\times (0,1))$,
\item $X^u$ solves \eqref{eq : control process X} for all $u\in I$.
\end{enumerate}
We say that the solution is unique if, whenever 
$(X^u)_{u\in I}$ and $(\tilde X^u)_{u\in I}$ are solutions to 
  \eqref{eq : control process X}   then  the processes $X^u$ and $\tilde X^u$ coincide, up to a $\P$-null set,  for almost all $u\in I$.
\end{Definition}

From Theorem 2.1 in \cite{de2024mean} we have the following existence and uniqueness result for the controlled mean-field non exchangeable system. 
\begin{Theorem}\label{THMexistUniqX}
    Under Assumption \ref{assumptionzero}, for an initial condition $\xi = (\xi^u)_{u \in I}$ satisfying \eqref{condIC} and an admissible control  $\alpha\in \Ac$, there exists a unique solution $X$ to
 \eqref{eq : control process X} in the sense of Definition \ref{def : solution SDE}. Moreover,  there exists a Borel measurable function  $\mrx$ from $I \times [0,T] \times C^n_{[0,T]} \times (0,1)$ to $\R^d$ such that $X_t^u = \mrx(u,t,W^u_{. \wedge t}, U^u)$ $\P$-a.s. for all  $(t,u)\in [0,T]\times I$.
\end{Theorem}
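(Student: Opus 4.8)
The statement is, almost word for word, Theorem~2.1 of \cite{de2024mean}, so the most economical route is to verify that the present hypotheses — Assumption~\ref{assumptionzero} on $(b,\sigma)$, the square-integrability and functional form of the initial data \eqref{condIC}, and the admissibility $\alpha\in\Ac$ — are exactly those required there, and then to quote the result. I will nevertheless sketch the self-contained argument, since the only genuinely non-classical ingredient (joint measurability in the label) is worth isolating. The guiding observation is that the coupling in \eqref{eq : control process X} enters \emph{only} through the collection of marginal laws $\P_{X^._t}\in L^2(\Pc_2(\R^d))$, never through the pathwise values of the other agents; this lets me decouple the system and set up a fixed point over the flow of law-collections.

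\emph{Decoupling and the fixed-point map.} Freeze a candidate flow $m=(m_t)_{t\in[0,T]}$ with $m_t\in L^2(\Pc_2(\R^d))$ and $(t,u)\mapsto m^u_t$ measurable, and consider the $u$-indexed family of \emph{ordinary} (non-McKean--Vlasov) SDEs obtained by substituting $m_t$ for $\P_{X^._t}$ in \eqref{eq : control process X}. For each fixed $u$ the coefficients $x\mapsto b(u,x,m_t,\alpha^u_t)$ and $x\mapsto\sigma(u,x,m_t,\alpha^u_t)$ are Lipschitz with the uniform constant $L$ of Assumption~\ref{assumptionzero}(i)--(ii) and of linear growth by (iii), so classical strong existence and uniqueness give a solution $X^{u,m}$. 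Standard Burkholder--Davis--Gundy and Gronwall estimates yield $\E[\sup_{t\le T}|X^{u,m}_t|^2]\le C(1+\E|\xi^u|^2+\int_0^T\E|\alpha^u_t|^2\d t+\sup_t\Wc_2(m^u_t,\delta_0)^2)$ with $C$ independent of $u$; integrating in $u$ and using \eqref{condIC} and admissibility shows $X^{\cdot,m}\in L^2(I;\Sc^d)$. I then define $\Phi(m)_t:=(\P_{X^{u,m}_t})_{u\in I}$.

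\emph{Contraction and uniqueness.} For two inputs $m,m'$ the Lipschitz bounds and the elementary coupling inequality $\Wc_2(\P_{X^{u,m}_t},\P_{X^{u,m'}_t})^2\le\E|X^{u,m}_t-X^{u,m'}_t|^2$ give, after a Gronwall step and integration over $u$, an estimate $\sup_{t\le T}\bd(\Phi(m)_t,\Phi(m')_t)^2\le C_T\int_0^T\bd(m_s,m'_s)^2\d s$. This makes $\Phi$ a contraction for small $T$ (or under a time-weighted norm $e^{-\lambda t}$, or by iterating on short subintervals and concatenating), and its unique fixed point $m^*$ satisfies $m^*_t=\P_{X^._t}$, i.e.\ the associated $X$ solves \eqref{eq : control process X}. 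Uniqueness in the sense of Definition~\ref{def : solution SDE} follows from the same computation: setting $\phi(t):=\int_I\E[\sup_{s\le t}|X^u_s-\tilde X^u_s|^2]\d u$ for two solutions driven by the same noise, the Lipschitz and coupling bounds give $\phi(t)\le C\int_0^t\phi(s)\d s$, whence $\phi\equiv0$ and $X^u=\tilde X^u$ $\P$-a.s.\ for a.e.\ $u$.

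\emph{Measurability and representation — the main obstacle.} The delicate, genuinely non-classical point is to propagate Borel measurability in the label $u$ through the construction, so that $u\mapsto\Lc(X^{u},W^u,U^u)$ is measurable (guaranteeing $m^*_t\in L^2(\Pc_2(\R^d))$ and that the integrals $\int_I\,\d u$ are meaningful). The plan is to build the solution by Picard iteration: each iterate $X^{u,(k+1)}$ is obtained from $X^{u,(k)}$ through the classical It\^o solution map, which is jointly measurable in its data; since $b,\sigma$ are Borel in $u$ and $(\xi^u,\alpha^u)$ are, by \eqref{condIC} and admissibility, measurable functionals of $(U^u,W^u)$, measurability in $u$ is preserved at each step and passes to the $L^2(I;\Sc^d)$-limit. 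This simultaneously delivers the functional representation $X^u_t=\mrx(u,t,W^u_{\cdot\wedge t},U^u)$, as every iterate is such a functional and the form survives the limit. I expect essentially all the work (beyond citing \cite{de2024mean}) to sit in this measurable-selection bookkeeping and in verifying it is stable under $\Phi$; the moment estimates, the contraction, and uniqueness are routine adaptations of the classical single-agent McKean--Vlasov theory.
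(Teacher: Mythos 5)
Your proposal is correct and follows essentially the same route as the paper: the paper likewise quotes Theorem~2.1 of \cite{de2024mean} for existence and uniqueness and devotes its proof entirely to the measurable representation $X^u_t=\mrx(u,t,W^u_{\cdot\wedge t},U^u)$, obtained exactly as you describe — Picard iterates inherit the functional form (via the measurable It\^o solution map, Lemma~10.1 of \cite{rogers_diffusions_2000}) and the form survives the $L^2(I;\Sc^d)$-limit by taking $\limsup$ of the representing functions. The only organisational difference is that the paper runs this as a two-level iteration (first for the SDE with frozen law-flow, then for the outer iteration on the measure flow), which your single "Picard iteration" sketch implicitly nests but does not separate.
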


\begin{proof}
   We fix $\xi$ and $\alpha \in \Ac$. The existence and uniqueness of $X=(X^u)_{u\in I}$ is given by Theorem 2.1 in \cite{de2024mean}. We therefore only need to prove the existence of a Borel measurable function  $x$ such that $X_t^u = \mrx(u,t,W^u_{. \wedge t}, U^u)$ $\P$-a.s. for all  $(t,u)\in [0,T]\times I$.
   To prove the result, we proceed in two steps.

\vspace{2mm}

\noindent   \textit{Step 1.}   
   For any $\nu$ in $L^2\big(\Pc_2(\Cc^d_{[0,T]})\big)$ and for any $u \in I$, we denote by $X^{\nu,u}$ the solution to 
   \begin{align}
       \left\{\begin{aligned}
        \d X_t^{\nu,u} &= b(u,X_t^{\nu,u},(\nu_t^v)_v,\alpha_t^u) \d t + \sigma(u,X_t^{\nu,u},(\nu_t^v)_v,\alpha_t^u) \d W_t^u \quad 0 \leq t \leq T, \\
        X_0^u &= \xi^u, u \in I.
    \end{aligned}\right.
   \end{align}
 Under Assumption \ref{assumptionzero}, the process $X^{\nu,u}$ is uniquely defined for any $\nu\in L^2 \big(\Pc_2(\R^d)\big)$ and $u\in I$.  
We prove that  for any $\nu \in L^2\big(\Pc_2(\Cc^d_{[0,T]})\big)$, the process $X^{\nu,u}$ admits the required property: there exists a Borel measurable function  $\mrx^\nu$ such that $X_t^{\nu,u} = \mrx^\nu(u,t,W^u_{. \wedge t}, U^u)$ $\P$-a.s. for all  $(t,u)\in [0,T]\times I$.

Define the sequence of processes $(X^{\nu,m,u})_{m\geq0}$ by  $X_t^{\nu,0,u}=\xi^u$ for $t\in[0,T]$ and 
\begin{align}
       \left\{\begin{aligned}
        \d X_t^{\nu,m+1,u} &= b(u,X_t^{\nu,m,u},(\nu_t^v)_v,\alpha_t^u) \d t + \sigma(u,X_t^{\nu,m,u},(\nu_t^v)_v,\alpha_t^u) \d W_t^u \quad 0 \leq t \leq T, \\
        X_0^u &= \xi^u, u \in I.
    \end{aligned}\right.
   \end{align}
   for $m\geq0$.

Using  Lemma 10.1 in  \cite{rogers_diffusions_2000}, 
we get by an induction argument that there exists a Borel measurable function  $\mrx^{\nu,m}$ such that $X_t^{\nu,m,u} = \mrx^{\nu,m}(u,t,W^u_{. \wedge t}, U^u)$ $\P$-a.s. for all  $(t,u)\in [0,T]\times I$ and all $m\geq0$.

From Assumption \ref{assumptionzero}, we have by classical estimates on diffusion processes
\begin{align}\label{convXnunXnu}
\sup_{u\in I}\E\Big[\sup_{t\in[0,T]}|X_t^{\nu,m,u}-X_t^{\nu,u}|^2\Big] \xrightarrow[m\rightarrow+\infty]{}0\;.
\end{align}
Define the measurable maps $\mrx^{\nu}$ by 
\begin{align*}
\mrx^{\nu} & =  \limsup_{m\rightarrow+\infty}~ \mrx^{\nu,m}.
\end{align*}
From \eqref{convXnunXnu}, we get $X_t^{\nu,u} = \mrx^{\nu}(u,t,W^u_{. \wedge t}, U^u)$ $\P$-a.s. for all  $(t,u)\in [0,T]\times I$. 

   \vspace{2mm}

\noindent   \textit{Step 2.} We turn to the general case. Define the sequence of processes $(X^{m,u})_{n\geq0}$ by  $X_t^{0,u}=\xi^u$ for $t\in[0,T]$ and 
\begin{align}
       \left\{\begin{aligned}
        \d X_t^{m+1,u} &= b(u,X_t^{m+1,u},\P_{X^{m,.}_t},\alpha_t^u) \d t + \sigma(u,X_t^{\nu,m+1,u},\P_{X^{m,.}_t},\alpha_t^u) \d W_t^u \quad 0 \leq t \leq T, \\
       X_0^u &= \xi^u, u \in I.
    \end{aligned}\right.
   \end{align}
   for $n\geq0$.
 From Step 1, we have by an induction argument $X_t^{m,u} = \mrx^{m}(u,t,W^u_{. \wedge t}, U^u)$ $\P$-a.s. for all  $(t,u)\in [0,T]\times I$ with $\mrx^n$ a Borel function  for all $n\geq 0$. 
We next define the measurable map $\mrx$ by 
\begin{align}\label{xlimsupxn}
\mrx & =  \limsup_{m\rightarrow+\infty}~\mrx^{m}
\end{align}
From Step II of the proof of Theorem 2.1 in \cite{de2024mean}, we have for $r$ large enough
\begin{align}\label{convXnkX}
\int_{I}\E\Big[\sup_{t\in[0,T]}|X_t^{mr,u}-X_t^{u}|^2\Big]du \xrightarrow[m\rightarrow+\infty]{}0\;.
\end{align}
From \eqref{xlimsupxn} and \eqref{convXnkX}, we get the result.  
\end{proof}

\subsection{The control problem}
We next introduce two reward functions 
\beqs 
f:~I\times \R^d\times L^2\big(\Pc_2(\R^d) \big)\times A\to \R & \mbox{ and }&
  g:I\times \R^d\times L^2 \big(\Pc_2(\R^d)\big)\to \R
\enqs
and we make the following assumption. 
\begin{Assumption}\label{assumptiononfgbis}$\,$
The functions $f$ and $g$
 are Borel measurable and there exists a constant   $M\ge 0$ such that 
 \begin{align}
- M \big(1+|x|^2+  \bd(\mu,\delta_0)^2  \big)\leq  f(u,x,\mu,a)\leq  M \big(1+|x|^2+  \bd(\mu,\delta_0)^2 + |a|^2 \big) , 
\end{align} 
 and 
 \begin{align}
 |g(u,x,\mu)| \leq  M  \big( 1+ |x|^2 + \bd(\mu,\delta_0)^2 \big),
\end{align} 
 for every  $u\in I$, $a\in A$, $x\in\R^d$, $\mu\in L^2 (\Pc_2(\R^d))$. 
 \end{Assumption}

We define the cost functional $J:~\Ac\rightarrow\R$ by
\begin{align}
J(\alpha) &:= \; \int_{I} \mathbb{E} \big[\int_{0}^{T} f(u,X_t^u,\P_{X_t^\cdot},\alpha_t^u) dt  + g(u,X_T^u,\P_{X_T^\cdot}) \big] \d u.
\end{align}
Under Assumptions \ref{assumptionzero} and \ref{assumptiononfgbis}, we get from Theorem \ref{THMexistUniqX} that $J(\alpha)$ is well defined for any $\alpha\in\Ac$. Our aim is to study the optimal control problem consisting in minimizing the function $J$ over $\Ac$, that is, computing 
\begin{align} \label{eq : Fonctionnelle Coût}
V_0 & := \;  \inf_{\alpha\in\Ac} J(\alpha),
\end{align}
and finding an optimal control $\alpha^*\in\Ac$, that is
\begin{align}
J(\alpha^*) & = \;  V_0.
\end{align}
We notice that the bounds of $f$ in Assumption \ref{assumptiononfgbis} ensures that $V_0$ is finite.



\section{Hamiltonian and adjoint equations for non exchangeable SDE}

In this section, we present the key concepts that will serve as the foundation for analyzing the stochastic optimization problem \eqref{eq : Fonctionnelle Coût}.

\subsection{Derivative and convexity in $L^2(\Pc_2(\R^d))$}


 We first recall the definition of the linear functional derivative on $L^2 (\Pc_2(\R^d))$ introduced in \cite{de2024mean} as an extension of the derivative on the Wasserstein space $\Pc_2(\R^d)$.

\vspace{2mm}
 
For $\mu \in L_2 \big(\Pc_2(\R^d)\big)$ and $\phi:~I\times \R^d\rightarrow \R$ a measurable function  with quadratic growth in $x$, uniformly in $u$, we define the duality product $\langle \phi, \mu \rangle$ by
\begin{equation}\label{eq : duality product}
    \langle \phi, \mu \rangle := \int_{I} \int_{\R^d} \phi(u,x) \mu^u(\d x) \d u.
\end{equation}

\begin{Definition}
\label{Defderivative} (i) Given a  function
$v: L^2(\Pc_2(\R^d))\to \R$, we say that a measurable function 
\begin{align}
\dmu v:  L^2 (\Pc_2(\R^d)) \times I \times \R^d \; \ni \; (\mu,u,x)   \; \longmapsto \; \dmu v(\mu)(u,x) \in \R    
\end{align}
is the linear functional derivative (or flat derivative) of $v$ if
\begin{enumerate}
\item  $(\mu,x)\mapsto\dmu v(\mu)(u,x)$ is continuous from $L^2 (\Pc_2(\R^d))\times\R^d$ to $\R$ for  all $u\in I$;
    \item for every compact set $K\subset  L^2 (\Pc_2(\R^d))$ there exists a constant $C_K>0$ such that
    $$\left|
    \dmu v(\mu)(u,x) \right|\le C_K\, (1+|x|^2),  $$
    for all $u\in I$, $x\in \R^d$, $\mu\in K$;
    \item  we have
\beqs
v(\nu)-v(\mu) & = & \int_0^1 \langle \dmu v(\mu + \theta(\nu-\mu)),\nu - \mu \rangle \d \theta \\
 & =  &   \int_0^1 \int_I  \int_{\R^d} \dmu v(\mu + \theta(\nu-\mu)(u,x) \,(\nu^u-\mu^u)(\d x) 
 \d u \d \theta
\enqs
for all $\mu,\nu\in  L^2 (\Pc_2(\R^d))$.
\end{enumerate}
(ii) We say that the function $v$ admits a continuously differentiable flat derivative if 
\begin{enumerate}
\item $v$ admits a flat derivative $ \dmu v$ satisfying $x\mapsto  \dmu v(\mu)(u,x)$ is Fr\'echet differentiable with Fr\'echet derivative denoted by $x\mapsto\partial \dmu v(\mu)(u,x)$ for all $(\mu, u)\in L^2 (\Pc_2(\R^d))\times I$; 
\item  $(\mu,x)\mapsto\partial \dmu v(\mu)(u,x)$ is continuous from $L^2(\Pc_2(\R^d))\times\R^d$ to $\R$ for  all $u\in I$;
    \item for every compact set $K\subset  L^2 (\Pc_2(\R^d))$ there exists a constant $C_K>0$ such that
    $$\left|
    \partial \dmu v(\mu)(u,x) \right|\le C_K\, (1+|x|),  $$
    for all $u\in I$, $x\in \R^d$, $\mu\in K$.

\end{enumerate}
\end{Definition}

\begin{Remark}\label{Extended functions}
    In the sequel, the function $v$ will be mostly defined on $I \times \R^d \times L^2(\Pc_2(\R^d)) \to \R$. In this case, the flat derivative of $v$ is defined as a measurable function 
    \begin{equation}
        \dmu v: I \times \R^d \times  L^2(\Pc_2(\R^d)) \times I \times \R^d \; \ni \; (u,x,\mu,\tilde u,\tilde x)   \; \longmapsto \; \dmu v(u,x,\mu)(\tilde u,\tilde x) \in \R
        \end{equation}
        satisfying :
    \begin{enumerate}
    \item  $(x,\mu,\tilde{x}) \mapsto \frac{\delta}{\delta m} v(u,x,\mu)(\tilde u,\tilde{x})$ is continuous from   $\R^d \times L^2 (\Pc_2(\R^d)) \times \R^d$  to $\R$ for all $u,\tilde u\in I$;

    \item     For every compact set $K \subset L^2(\Pc_2(\R^d) )$, there exists a constant $C_K > 0$ such that :

    \begin{align}
        \Big | \frac{\delta}{\delta m}v(u,x,\mu)(\tilde{u},\tilde{x}) \Big |  \leq C_K \big( 1+ |x|^2 + | \tilde{x}|^2 \big)
    \end{align}
    for any $u,\tilde{u} \in I$, $x, \tilde{x} \in \R^d$ and $\mu \in K$;

    \item    we have 
    \begin{align}
    v(u,x,\nu) - v(u,x,\mu) &=  \int_0^1  \langle \frac{\delta}{\delta m}v \big(u,x,\mu + \theta(\nu - \mu)\big), \nu - \mu) \d \theta \notag \\
    &= \int_{0}^1 \int_I \int_{\R^d} \frac{\delta}{\delta m}v\big(u,x,\mu + \theta( \nu  - \mu) \big)(\tilde{u},\tilde{x})(\nu^{\tilde{u}} - \mu^{\tilde{u}})(\d \tilde{x}) \d \tilde{u} \d \theta
    \end{align}
    for any $u \in I, x \in \R^d$ and $\mu,\nu \in L^2 (\Pc_2(\R^d))$.
\end{enumerate}
In this case, the flat derivative is said to be Fr\'echet continuously differentiable if
\begin{enumerate}
\item $v$ admits a flat derivative $ \dmu v$ satisfying $\tilde x\mapsto  \dmu v(u,x,\mu)(\tilde u,\tilde x)$ is Fr\'echet differentiable with Fr\'echet derivative denoted by $\tilde x\mapsto\partial \dmu v(u,x,\mu)(\tilde u,\tilde x)$ for all $(x,\mu, u,\tilde u)\in \R^d\times L^2 (\Pc_2(\R^d))\times I^2$;
\item  $(\mu,x, \tilde x)\mapsto\partial \dmu v(u,x,\mu)(\tilde u,\tilde x)$ is continuous from $L^2(\Pc_2(\R^d))\times\R^d\times\R^d$ to $\R^d$ for  all $u,\tilde{u} \in I$;
    \item for every compact set $K\subset  L^2(\Pc_2(\R^d))$ there exists a constant $C_K>0$ such that
    $$\left|
    \partial \dmu v(u,x,\mu)(\tilde u,\tilde x) \right|\le C_K\, (1+|x|+|\tilde x|),  $$
    for all $u, \tilde  u\in I$, $x, \tilde  x\in \R^d$, $\mu\in K$.\end{enumerate}
\end{Remark}

We next give some examples of functions for which we compute the linear functional derivatives.

\begin{Example}\label{example linear functional derivative}
\begin{enumerate}
\item[(i)] {\it Linear functions}
$$v(\mu)=
\int_I\int_{\R^d} \varphi(u,x)
\mu^u(\d x)\d u,
$$
where $\varphi$ is a measurable function with quadratic growth in $x$. 
Then, 
\begin{align}
\dmu v(\mu)(u,x) &= \varphi(u,x).     
\end{align}


\item[(ii)] Collection of cylindrical functions: Let $\phi= (\phi_1,\ldots,\phi_k)$ be an $\R^k$ valued  function defined on $I \times \R^d$.
\begin{align}
v(\mu) &= \; \int_I     
F\Big(\int_{\R^d}\phi(u,x) \mu^u(dx) \Big) \d u. 
\end{align}
Denoting the partial derivatives of $F$ by $\partial_iF$, we have, 
\begin{align*}
\dmu v(\mu)(u,x)  &=  \sum_{i=1}^k \partial_iF
\Big(\int_{\R^d}
\phi(u,x) \mu^u(dx) \Big) \,\phi_i(u,x).
\end{align*}
\item[(iii)]\label{example : Cylindrical functions} Cylindrical functions of measure collection: 
 Let $\phi= (\phi_1,\ldots,\phi_k)$ be an $\R^k$ valued  function defined on $I \times \R^d$.
\begin{align} 
v(\mu) & = \; F\Big(
\int_I\int_{\R^d}\phi(u,x) \,\mu^u(\d x)\d u
\Big),
\end{align} 
where $F:\R^k\to \R$ and $\phi_i$ are real functions on $\R^d$. 
Then, 
\begin{align*}
\dmu v(\mu)(u,x) &= \sum_{i=1}^k \partial_iF
\Big(
\int_I\int_{\R^d}\phi(u,x) \,\mu^u(\d x)\d u
\Big)\phi_i(u,x)
\end{align*}
\item[(iv)] $k$-interaction functions:
\begin{align}
v(\mu) &= \; \int_{I^k} 
\int_{(\R^d)^k} \varphi(u_1,\ldots,u_k,x_1,\ldots,x_k) 
\mu^{u_1}(\d x_1) \ldots \mu^{u_k}(\d x_k) \d u_1\ldots\d u_k.      
\end{align}
Then, 
\begin{align}
\dmu v(\mu)(u,x) = \sum_{i=1}^k \int_{U^{k-1}} \int_{(\R^d)^{k-1}} 
\varphi(u_1,\ldots,u_{i-1},u,u_{i+1},\ldots,u_k,x_1,\ldots,x_{i-1},x,x_{i+1},\ldots,x_k) \\
  \mu^{u_1}(\d x_1) \ldots  \mu^{u_{1-1}}(\d x_{i-1}) 
 \mu^{u_{1+1}}(\d x_{i+1}) \ldots \mu^{u_k}(\d x_k) \d u_1 \ldots 
 \d u_{i-1} \d u_{i+1} \ldots \d u_k.  
\end{align}

\end{enumerate}
\end{Example}

We now present a key lemma that will play a crucial role in the next section, in the derivation of the Pontryagin maximum principle.

\begin{Lemma}\label{lemma 1bis}
Let $f: L^2(\Pc_2(\mathbb{R}^d)) \to \mathbb{R}$. Suppose that $f$  has a continuously differentiable  linear functional derivative $\dmu{f}$ . 
 For $X,Y \in L^2 \big(\Omega,\Fc,\P,\R^d\big)^I$ such that $\P_{X^.},\P_{Y^.}\in L^2(\Pc_2(\mathbb{R}^d))$ we have
\begin{equation}\label{eq : lemma1}
     \underset{\epsilon \to 0}{\lim} \frac{1}{\epsilon} \Big(f(\P_{X^.+ \epsilon Y^.}) - f(\P_{X^.})\Big) = 
     \int_{I} \mathbb{E}\big[\partial_{} \frac{\delta f}{\delta m}(\P_{X^.})(u,X^u) \cdot Y^u \big] \d u.  \quad 
\end{equation}
\end{Lemma}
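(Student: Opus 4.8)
The plan is to reduce the statement to the defining expansion property of the flat derivative (Definition \ref{Defderivative}(i), item 3) combined with a first-order Taylor expansion in the space variable, and then to pass to the limit by dominated convergence. Write $\mu := \P_{X^\cdot}$ and $\mu_\eps := \P_{X^\cdot + \eps Y^\cdot}$; both lie in $L^2(\Pc_2(\R^d))$ since $X^u, Y^u \in L^2$ and the maps $u \mapsto \mu^u,\mu_\eps^u$ are Borel (this follows from measurability of $u\mapsto \Lc(X^u,Y^u)$ together with \eqref{eq : Measurability condition}). For $\theta \in [0,1]$ set $\mu_{\theta,\eps} := \mu + \theta(\mu_\eps - \mu) = (1-\theta)\mu + \theta\mu_\eps$, which is again in $L^2(\Pc_2(\R^d))$ as a fiberwise convex combination of square-integrable probability measures.

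First I would apply item 3 of Definition \ref{Defderivative}(i) with $\nu = \mu_\eps$ and unfold the duality product \eqref{eq : duality product}, obtaining
\[
\frac{f(\mu_\eps) - f(\mu)}{\eps} = \int_0^1 \frac{1}{\eps}\int_I \int_{\R^d} \dmu f(\mu_{\theta,\eps})(u,x)\,(\mu_\eps^u - \mu^u)(\d x)\,\d u\,\d\theta .
\]
Since $\mu_\eps^u = \P_{X^u + \eps Y^u}$ and $\mu^u = \P_{X^u}$, the inner $x$-integral equals $\E\big[\dmu f(\mu_{\theta,\eps})(u, X^u + \eps Y^u) - \dmu f(\mu_{\theta,\eps})(u, X^u)\big]$. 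Because the flat derivative is continuously differentiable, $x \mapsto \dmu f(\mu)(u,x)$ is Fréchet differentiable with derivative $\partial\dmu f$, so the fundamental theorem of calculus rewrites this difference quotient as $\E\big[\int_0^1 \partial\dmu f(\mu_{\theta,\eps})(u, X^u + r\eps Y^u)\cdot Y^u\,\d r\big]$, turning the right-hand side into an integral over $(\theta,u,r)$ of an expectation.

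Next I would pass to the limit $\eps \to 0$ under the integral sign. Pointwise, the integrand converges to $\partial\dmu f(\mu)(u,X^u)\cdot Y^u$: indeed $X^u + r\eps Y^u \to X^u$, and $\mu_{\theta,\eps} \to \mu$ in $L^2(\Pc_2(\R^d))$ since, by joint convexity of $\Wc_2^2$,
\[
\bd(\mu_{\theta,\eps}, \mu)^2 = \int_I \Wc_2(\mu_{\theta,\eps}^u, \mu^u)^2\,\d u \le \int_I \Wc_2(\mu_\eps^u, \mu^u)^2\,\d u \le \eps^2 \int_I \E\big[|Y^u|^2\big]\,\d u ,
\]
so continuity of $(\mu,x)\mapsto\partial\dmu f(\mu)(u,x)$ gives the convergence. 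For the domination I would use the family $\{\mu_{\theta,\eps}:\theta\in[0,1],\,\eps\in[0,\eps_0]\}$, which is the image of the compact set $[0,1]\times[0,\eps_0]$ under the $\bd$-continuous map $(\theta,\eps)\mapsto\mu_{\theta,\eps}$, hence contained in a compact $K$; on $K$ the bound in Definition \ref{Defderivative}(ii) gives $|\partial\dmu f(\mu')(u,x)|\le C_K(1+|x|)$, so the integrand is dominated by $C_K\big(1+|X^u|+\eps_0|Y^u|\big)|Y^u|$, which is integrable against $\d\P\,\d r\,\d u\,\d\theta$ because $X,Y\in L^2$. Dominated convergence then yields
\[
\lim_{\eps \to 0}\frac{f(\mu_\eps) - f(\mu)}{\eps} = \int_0^1 \int_I \E\big[\partial\dmu f(\mu)(u, X^u)\cdot Y^u\big]\,\d u\,\d\theta = \int_I \E\big[\partial\dmu f(\mu)(u, X^u)\cdot Y^u\big]\,\d u ,
\]
which is \eqref{eq : lemma1}.

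The main obstacle I anticipate is the domination step, i.e.\ showing that all interpolants $\mu_{\theta,\eps}$ sit inside a single compact subset of $L^2(\Pc_2(\R^d))$ so the local constant $C_K$ applies uniformly in $(\theta,\eps)$. This hinges on joint continuity of $(\theta,\eps)\mapsto\mu_{\theta,\eps}$, which I would establish in two directions: the $\eps$-direction is controlled exactly as in the displayed $\bd$-estimate via joint convexity of $\Wc_2^2$, while the $\theta$-direction is controlled by an explicit mixture coupling of $(1-\theta)\mu^u+\theta\mu_\eps^u$ and $(1-\theta')\mu^u+\theta'\mu_\eps^u$, both integrated over $u$ using the finite second moments of $X$ and $Y$. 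The remaining measurability and joint-integrability verifications are routine given the standing measurability of $u\mapsto\Lc(X^u,Y^u)$ and the continuity assertions in Definition \ref{Defderivative}.
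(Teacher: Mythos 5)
Your proposal is correct and follows essentially the same route as the paper's proof: apply item 3 of the definition of the flat derivative, rewrite the measure increment as an expectation, use Fréchet differentiability in $x$ to get the inner $\lambda$-integral, observe that $\bd(\P_{X^\cdot+\eps Y^\cdot},\P_{X^\cdot})\le \eps(\int_I\E[|Y^u|^2]\,\d u)^{1/2}$ and that the interpolants form a compact set as the continuous image of $[0,1]^2$, and conclude by dominated convergence using the linear-growth bound on $\partial\dmu f$ over that compact set together with Cauchy--Schwarz. Your additional care about joint continuity of $(\theta,\eps)\mapsto\mu_{\theta,\eps}$ in the $\theta$-direction is a detail the paper leaves implicit, but it does not change the argument.
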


\begin{proof}
    By definition of  the linear functional derivative $\dmu f$ and its Fr\'echet derivative $\partial\dmu f$, we have :
     \begin{align}
       f(\P_{X^.+ \epsilon Y^.}) - f(\P_{X^.}) &= \int_{0}^{1} \int_{I} \int_{\mathbb{R}^{d}} \frac{\delta }{\delta m}f (  \theta \P_{X^.+ \epsilon Y^.} + (1-\theta) \P_{X^.})(u,x) (\P_{X^u + \epsilon Y^u} - \P_{X^u})(dx) \d u  \d \theta \notag \\
       &= \int_{0}^{1} \int_{I} \mathbb{E}\Big[\frac{\delta }{\delta m}f(\theta \P_{X^.+ \epsilon Y^.}+ (1-\theta) \P_{X^.})(u,X^u + \epsilon Y^u) \notag \\
       & \hspace{0.35 cm}-  \frac{\delta }{\delta m}f(\theta  \P_{X^.+ \epsilon Y^.} + (1-\theta) \P_{X^.})(u,X^u) \Big] \d u \d \theta \notag \\
       &= \epsilon  \int_{0}^{1} \left( \int_{I} \mathbb{E} \Big[\int_{0}^{1} \partial\frac{\delta }{\delta m}f(\theta \P_{X^.+ \epsilon Y^.} + (1-\theta) \P_{X^.})(u,X^u + \lambda \epsilon Y^u) \cdot Y^u   \d \lambda \Big] \d u \right) \d \theta \notag\;.
   \end{align}
We then notice that 
   \begin{align}
\bd(\P_{X^.+ \epsilon Y^.},\P_{X^.}) \leq \epsilon \left( \int_{I} \E \big[|Y^u|^2 \big]  \d u\right)^{\frac{1}{2}}\xrightarrow[\epsilon \rightarrow 0]{}0\;.
   \end{align}
 Moreover, the set $K=\lbrace \theta \P_{X^.+ \epsilon Y^.} + (1-\theta) \P_{X^.}: \epsilon \in [0,1], \theta \in [0,1]\rbrace$ is compact as the image of the compact set $[0,1]^2$ by the continuous map $(\theta,\eps)\mapsto \theta \P_{X^.+ \epsilon Y^.} + (1-\theta) \P_{X^.}$. 
 From Cauchy-Schwarz inequality, the quadratic growth and continuity of  $\partial \frac{\delta }{\delta m}f$  we conclude from the convergence dominate theorem.
\end{proof}

\begin{Remark}
In the case where $f$ is defined on $I \times \R^d \times L^2(\Pc_2(\R^d)) \times A$ and has a continuously differentiable linear functional derivative $\frac{\partial}{\partial m} f$ in the sense of Remark \ref{Extended functions}, Lemma \ref{lemma 1bis} gives  
\begin{align}
        \underset{\epsilon \to 0}{\lim} \frac{1}{\epsilon} \big( f(u,x,\P_{X^.+ \epsilon Y^.},a) - f(u,x,\P_{X^.},a) \big)=  \int_{I} \mathbb{E}\big[\partial \frac{\delta }{\delta m}f(u,x,\P_{X^.},a)(\tilde{u},X^{\tilde{u}}) \cdot Y^{\tilde{u}} \big] \d \tilde{u},
\end{align}
 for $(u,x,a)\in I\times \R^d\times A$ and $X,Y \in L^2 \big(\Omega,\Fc,\P,\R^d\big)^I$ such that $\P_{X^.},\P_{Y^.}\in L^2(\Pc_2(\R^d))$.
\end{Remark}

We now introduce a concept of convexity that aligns with the notion of differentiability previously defined. A  function $f$ defined on $L^2(\Pc_2(\R^d))$ assumed to have a continuous differentiable flat derivative in the sense defined above is said to be convex if for every $\mu=(\mu^u)_{u}, \mu'=((\mu')^u)_{u} \in L^2(\Pc_2(\R^d))$, we have 

\begin{equation}\label{eq : convex case 1}
    f(\mu')-f(\mu) \geq \int_{I} \mathbb{E} \big[\partial \frac{\delta }{\delta m}f(\mu)(u,X^u) \cdot (X'^{u} - X^{u}) \big] \d u,
\end{equation}
where $X^{u} \sim \mu^{u}$ and $X'^{u} \sim (\mu')^{u}$.

\noindent More generally,  a function $f$ defined on $I \times \R^d \times L^2 (\Pc_2(\R^d))$ which is jointly differentiable in $(x,\mu) \in \R^d \times L^2 (\Pc_2(\R^d)) $ is said to be convex if for every $(x,\mu)$ and $(x',\mu')$ and for every $u \in I$, we have 

\begin{equation}\label{eq : convex case 2}
    f(u,x',\mu') - f(u,x,\mu) \geq \partial_x f(u,x,\mu) \cdot (x'-x) + \int_{I} \mathbb{E} \big[\partial \frac{\delta }{\delta m}f(u,x,\mu)(\tilde{u},X^{\tilde{u}}) \cdot (X'^{\tilde{u}}-X^{\tilde{u}}) \big]  \d \tilde{u}.
\end{equation}

\begin{Remark}\label{remark: convex case}
    In the most general setting, the function $f$ defined on $I \times \R^d \times L^2 \big(\Pc_2(\R^d)\big) \times A$ assumed to be jointly differentiable in $(x,\mu,a) \in \R^d \times L^2 \big(\Pc_2(\R^d)\big) \times A$ is said to be convex if for every $(x,\mu,a)$ and $(x',\mu',a')$ and for every $u \in I$, we have 
    \begin{align}\label{eq : convexity function}
        f(u,x',\mu',a') - f(u,x,\mu,a) &\geq \partial_x f(u,x,\mu,a) \cdot (x'-x)  + \partial_{a} f(u,x,\mu,a) \cdot (a'-a) \\
         &\quad + \int_{I} \mathbb{E} \big[\partial \frac{\delta }{\delta m}f(u,x,\mu,a)(\tilde{u},X^{\tilde{u}}) \cdot (X'^{\tilde{u}}-X^{\tilde{u}}) \big] \d \tilde{u}.
    \end{align}

\end{Remark}

\subsection{The Hamiltonian and the dual equations}

In our framework, the Hamiltonian of the stochastic optimization problem is defined as the function $H$ valued in $\R$ given by 

\begin{equation}\label{eq : Hamiltonien}
    H(u,x,\mu,y,z,a) := b(u,x,\mu,a) \cdot y + \sigma(u,x,\mu,a):z + f(u,x,\mu,a),
\end{equation}
for $(u,x,\mu,y,z,a) \in I \times \R^d \times L^2 (\Pc_2(\R^d)) \times \R^d \times \R^{d \times n} \times A$.

\begin{Definition}\label{def : adjoint processes}
Suppose  that the coefficients $b$, $\sigma$, $f$ and $g$ are differentiable with respect to $x$ and admit differentiable  flat derivatives with respect to $\mu$. Fix an admissible control $\alpha \in \Ac$ and inital condition $\xi$, and  denote by $X= (X^{u})_{u\in I}$ the solution to \eqref{eq : control process X}. 
Suppose also that 
\begin{equation}
    \int_{I} \E\bigg[\int_{0}^{T} \left[ | \partial_x f(u,X_t^u,\P_{X_t^.} , \alpha_t^u)|^2 + \int_I \tilde{\E} \big[ |\partial \frac{\delta }{\delta m}f(\tilde{u},\tilde{X}_t^u,\P_{X_t^.} , \tilde{\alpha}_t^u)(u,X_t^u) |^2  \big] \d \tilde{u} \right] \d t\bigg] \d u  < + \infty ,
\end{equation}
and
\begin{equation}
    \int_{I} \E \bigg[ | \partial_x g(u,X_T^u,\P_{X_T^.})|^2  + \int_{I} \tilde{\E} \big[| \partial \frac{\delta }{\delta m}g(\tilde{u},\tilde{X}_T^u,\P_{X_T^.})(u,X_T^u)|^2 \big] \d \tilde{u} \bigg] \d u < + \infty, 
\end{equation}
where $(\tilde X^u,\tilde \alpha^u)$ is a copy of $(X^u,\alpha^u)$ defined on another probability space $(\tilde{\Omega},\tilde{\Fc},\tilde{\P})$ with expectation operator $\tilde \E$.

We call adjoint processes of $X$ any pair $(Y,Z) = (Y_t^u,Z_t^u)_{u \in I, 0 \leq t \leq T}$ of processes in  $L^2(I; \Sc^{d}) \times L^2(I; \Hc^{2,d \times n})$  satisfying the following conditions.
\begin{enumerate}[(i)]
\item $(Y,Z)$ is solution to the adjoint equations 
 \begin{align}  \label{eq : adjoint equations}
        \begin{cases} 
        dY_t^u  = - \partial_{x} H(u,X_t^u,\P_{X_t^.},Y_t^u,Z_t^u,\alpha_t^u) \d t  + Z_t^u \d W_t^u  \\
         \qquad  \quad - \int_{I} \mathbb{\tilde{E}}\left[ \partial \frac{\delta }{\delta m}H(\tilde{u},\tilde{X}_t^{\tilde{u}},\P_{X_t^.},\tilde{Y}_t^{\tilde{u}},\tilde{Z}_t^{\tilde{u}},\tilde{\alpha}_t^{\tilde{u}})(u,X_t^u)\right] \d \tilde{u} \d t \;,\quad t\in[0,T]\;, \\
        \; Y_T^u   =  \partial_{x} g(u,X_T^u, \P_{X_T^.}) + \int_{I}  \mathbb{\tilde{E}} \left[\partial \frac{\delta }{\delta m}g(\tilde{u},\tilde{X}_T^{\tilde{u}},\P_{X_T^.})(u,X_T^{u}) \right] \d \tilde{u}\;, \\
        \end{cases}
\end{align}
for  every $u \in I$ where $(\tilde{X},\tilde{Y},\tilde{Z},\tilde{\alpha})$ is an independent copy of $(X,Y,Z,\alpha)$ defined on $(\tilde{\Omega},\tilde{\Fc},\tilde{\P})$. 
\item There exists Borel  functions  $\mry$ and $\mrz$ defined on $I \times [0,T]  \times \Cc^n_{[0,T]} \times (0,1)$ such that
\begin{align}
    Y_t^u = \mry(u,t,W^u_{. \wedge t}, U^u), \quad \text{ and } \quad  Z_t^u = \mrz(u,t,W^u_{. \wedge t}, U^u).
\end{align}
for $t\in[0,T]$, $\P$-a.s. and $u\in I$.
\end{enumerate}
\end{Definition}

\section{Pontryagin principle for optimality}

\noindent In this section, we explore the necessary and sufficient conditions for optimality under suitable convexity assumptions on the Hamiltonian. Before that, we introduce the key regularity properties that will be used throughout this section and the reminder of the paper.

\begin{Assumption}\label{assumptionbasic}$\,$ 
\begin{enumerate}[(i)]
\item The functions $b$ and $\sigma$
 are   differentiable with respect to $(x,a)$, $\partial_x (b,\sigma)$ and $\partial_{a} (b,\sigma)$ are  bounded  and the mappings $(x,\mu,a) \mapsto \partial_x(b,\sigma,f)(u,x,\mu,a)$ and $(x,\mu,a) \mapsto \partial_{a}(b,\sigma,f)(u,x,\mu,a)$ are continuous for any $u \in I$.
\item The functions $b$ and $\sigma$ are assumed to have Fr\'echet differentiable linear functional derivatives $\partial \dmu{b}$ and $\partial \dmu{\sigma}$ satisfying the following  property: there exist  constants $L$ and $M$ such that
 \begin{align}
    \big|\partial \dmu{b}(u,x,\mu,a)(\tilde u,\tilde x)-\partial \dmu{b}(u,x',\mu',a)(\tilde u,\tilde x')\big| &~ \leq ~L(|x-x'|+|\tilde x-\tilde x'|+\bd(\mu,\mu')),\\
        \big|\partial \dmu{\sigma}(u,x,\mu,a)(\tilde u,\tilde x)-\partial \dmu{\sigma}(u,x',\mu',a)(\tilde u,\tilde x')\big| &~ \leq ~L(|x-x'|+|\tilde x-\tilde x'|+\bd(\mu,\mu')),
 \end{align}
for all $u,\tilde u\in I$, $x,x',\tilde x,\tilde x'\in \R^d$ and $\mu,\mu'\in L^2(\Pc_2(\R^d))$ and
 \begin{align}
\big|\partial \dmu{b}(u,0,\delta_0,a)(\tilde u,0)\big|+
\big|\partial \dmu{\sigma}(u,0,\delta_0,a)(\tilde u,0)\big|
 \le M\big( 1+|a|\big),
  \end{align}
 for all  $u,\tilde u \in I$ and $a\in A$.
\item The functions $f$ (resp. $g$) is differentiable with respect to $(x,a)$ (resp. $x$), $\partial_x f$ and $\partial_{a} f$ (resp. $\partial_x g$) are locally bounded uniformly in $u\in I$ and the mappings $(x,\mu,a) \mapsto \partial_x f(u,x,\mu,a)$ and $(x,\mu,a) \mapsto \partial_{a}f (u,x,\mu,a)$  (resp. $\partial_x g(u,x,\mu)$) are continuous for any $u \in I$.
\item The functions $f$ and $g$ admit Fr\'echet differentiable linear functional derivatives. Moreover, for any $X \in L^2(I;\Sc^d) $ the following quantities are locally bounded uniformly in $u \in I$
 \begin{align}
     \int_{I} \tilde{\E}\big[ |\partial \frac{\delta }{\delta m}f(u,x,\mu,\alpha)(\tilde{u},X^{\tilde{u}})|^2 \big] \d \tilde{u}, \quad \text{ and }  \int_{I} \tilde{\E}\big[ |\partial \frac{\delta }{\delta m}g(u,x,\mu)(\tilde{u},X^{\tilde{u}})|^2 \big] \d \tilde{u}.
 \end{align}

\end{enumerate}  
\end{Assumption}

\subsection{A necessary condition}

We denote for $\alpha \in \Ac$,   $X = X^{\alpha}$ the collection of stochastics processes $(X^{\alpha,u}_t)_{u \in I,t \in [0,T]}$ solution of \eqref{eq : control process X} with admissible initial condition $\xi $. We now choose $\beta \in L^2(I;\Hc^{2,m})$ such that $\alpha + \epsilon (\beta- \alpha) \in \Ac$ for $0 \leq \epsilon \leq 1$ as $A$ is convex. We denote by $\delta: = \beta - \alpha$.

 In the following, we note $ \big(\theta_t^u = (X_t^u, \mathbb{P}_{X_t^.},\alpha_t^u) \big)_{0 \leq t \leq T, u \in I}$ and we define the variation processes $V=(V^u)_{u \in I}$ as the solution of the following stochastic differential equation 

\begin{align}\label{eq : SDE V}
    dV_t^u &= \left[\gamma_t^uV_t^u + \int_{I} \mathbb{\tilde{E}}\left[\partial  \frac{\delta }{\delta m}b(u,\theta_t^u)(\tilde{u},\tilde{X}_t^{\tilde{u}})\tilde{V}_t^{\tilde{u}} \right] \d \tilde{u}  +  \eta_t^u\delta_t^u \right] \d t   \\
    &\quad + \left[\hat{\gamma}_t^uV_t^u + \int_{I} \mathbb{\tilde{E}}\left[\partial  \frac{\delta}{\delta m}\sigma(u,\theta_t^u)(\tilde{u},\tilde{X}_t^{\tilde{u}})\tilde{V}_t^{\tilde{u}}\right] \d \tilde{u}  + \hat{\eta}_t^u  \delta_t^u \right]\d W_t^u, \notag
\end{align}
valued in $\R^d$ with $V_0^u=0$ and 
\begin{enumerate}
    \item [$\bullet$] $\gamma_t^u = \partial_{x} b\left(u,\theta_t^u \right) \in \R^{d \times d}$,
    \item [$\bullet$] $\eta_t^u = \partial_{a} b \left(u,\theta_t^u \right) \in \R^{d \times m}$,
    \item [$\bullet$] $\hat{\gamma}_t^u = \partial_{x} \sigma(u,\theta_t^u)$ $\in \mathbb{R}^{(d \times n) \times d}$,
    \item [$\bullet$] $\hat{\eta}_t^u = \partial_{a} \sigma(u,\theta_t^u)$ $\in \mathbb{R}^{(d \times n) \times m }$,
\end{enumerate}
for $t\in[0,T]$ and $u\in I$.
The process $(X,V)$ satisfies a coupled SDE falling into the framework of \eqref{eq : control process X} with control pair $(\alpha,\beta)$ and admissible initial condition $(\xi,0)$. Under Assumptions \ref{assumptionzero} and  \ref{assumptionbasic} (i)-(ii), we get from Theorem \ref{eq : control process X} existence and uniqueness of $(X,V)$ and hence $V$.

\begin{Lemma}\label{lemDerX=V}
     For $\epsilon  \in [0,1]$, we define the admissible control  $\alpha^{\epsilon}:= \alpha + \epsilon(\beta - \alpha)= \alpha + \epsilon \delta$ and by $X^{\epsilon}=(X^{\epsilon,u})_{u} =(X^{\alpha^{\epsilon},u})_u$ the corresponding controlled state.  Then, the mapping $u \mapsto \P_{(X^u,X^{\epsilon,u},V^u)}$is measurable  and 
    \begin{align}\label{derXV}
        \underset{\epsilon \to 0}{\lim}  \int_{I}  \mathbb{E} \Big[\underset{ 0 \leq t \leq T}{\sup}  \big| \frac{X_t^{\epsilon,u} - X_t^{u}}{\epsilon} - V_t^{u} \big|^2 \Big] \d u ~ = ~0.
    \end{align} 
\end{Lemma}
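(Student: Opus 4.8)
The plan is to prove the measurability assertion from the measurable flow representation, and the convergence \eqref{derXV} by writing an SDE for the normalized error $\epsilon^{-1}(X^{\epsilon,u}-X^u)-V^u$, expanding the coefficients around the base point, and closing the estimate with Gronwall's lemma. Regarding measurability, since $X^\epsilon$ solves \eqref{eq : control process X} with control $\alpha^\epsilon$ and the pair $(X,V)$ solves a coupled system falling into the framework of \eqref{eq : control process X} (as noted just before the statement), Theorem \ref{THMexistUniqX} furnishes Borel functions representing $X_t^u$, $X_t^{\epsilon,u}$ and $V_t^u$ as measurable maps of $(u,t,W^u_{.\wedge t},U^u)$; since the three are driven by the common pair $(W^u,U^u)$, the joint law $u\mapsto\P_{(X^u,X^{\epsilon,u},V^u)}$ depends measurably on $u$. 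Writing $\Delta^{\epsilon,u}:=X^{\epsilon,u}-X^u$, I would first record the a priori bound obtained from the Lipschitz continuity of $b,\sigma$ in $(x,\mu)$ (Assumption \ref{assumptionzero}) and their Lipschitz continuity in $a$ (from the boundedness of $\partial_a(b,\sigma)$ in Assumption \ref{assumptionbasic}(i)), via Burkholder--Davis--Gundy and Gronwall, controlling the measure increment by $\bd(\P_{X_t^{\epsilon,.}},\P_{X_t^.})^2\le\int_I\E[|\Delta_t^{\epsilon,v}|^2]\,dv$:
\[
\int_I\E\Big[\sup_{0\le t\le T}|\Delta_t^{\epsilon,u}|^2\Big]\,du\;\le\;C\,\epsilon^2\int_I\E\Big[\int_0^T|\delta_t^u|^2\,dt\Big]\,du.
\]
In particular $\epsilon^{-1}\Delta^{\epsilon}$ is bounded in $L^2(I;\Sc^d)$ uniformly in $\epsilon$, while $\Delta^\epsilon\to0$ in $L^2(I;\Sc^d)$; a parallel estimate gives $V\in L^2(I;\Sc^d)$.

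Next I would set $\xi^{\epsilon,u}:=\epsilon^{-1}\Delta^{\epsilon,u}-V^u$, so $\xi_0^{\epsilon,u}=0$, and subtract \eqref{eq : SDE V} from the difference quotient of the dynamics \eqref{eq : control process X} for $X^\epsilon$ and $X$. Expanding each increment of $b$ and $\sigma$ by the fundamental theorem of calculus in the $x$ and $a$ variables, and by the flat-derivative expansion of Lemma \ref{lemma 1bis} (in the form of its Remark) in the measure variable, interpolating linearly between $\theta_t^u=(X_t^u,\P_{X_t^.},\alpha_t^u)$ and $\theta_t^{\epsilon,u}=(X_t^{\epsilon,u},\P_{X_t^{\epsilon,.}},\alpha_t^{\epsilon,u})$, would produce
\begin{align*}
d\xi_t^{\epsilon,u}
&=\Big[\gamma_t^u\xi_t^{\epsilon,u}+\int_I\tilde\E\big[\partial\tfrac{\delta}{\delta m}b(u,\theta_t^u)(\tilde u,\tilde X_t^{\tilde u})\,\tilde\xi_t^{\epsilon,\tilde u}\big]\,d\tilde u+R_t^{\epsilon,u}\Big]dt \\
&\quad+\Big[\hat\gamma_t^u\xi_t^{\epsilon,u}+\int_I\tilde\E\big[\partial\tfrac{\delta}{\delta m}\sigma(u,\theta_t^u)(\tilde u,\tilde X_t^{\tilde u})\,\tilde\xi_t^{\epsilon,\tilde u}\big]\,d\tilde u+\hat R_t^{\epsilon,u}\Big]dW_t^u,
\end{align*}
where the remainders $R^{\epsilon},\hat R^{\epsilon}$ collect the differences between the coefficients $\partial_x(b,\sigma)$, $\partial_a(b,\sigma)$, $\partial\frac{\delta}{\delta m}(b,\sigma)$ evaluated along the interpolation and at the base point $\theta_t^u$, multiplied by the $L^2$-bounded factors $\epsilon^{-1}\Delta^\epsilon$, $\delta$ or $\epsilon^{-1}\tilde\Delta^\epsilon$.

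I would then apply Itô's formula to $|\xi^{\epsilon,u}|^2$, BDG and Cauchy--Schwarz, using the boundedness of $\gamma,\hat\gamma$ and the growth bounds of Assumption \ref{assumptionbasic}(ii) to dominate the mean-field terms by $\int_I\tilde\E[|\tilde\xi_s^{\epsilon,\tilde u}|^2]\,d\tilde u$, and integrate in $u$ to obtain
\begin{align*}
\int_I\E\Big[\sup_{s\le t}|\xi_s^{\epsilon,u}|^2\Big]du
&\le C\int_0^t\int_I\E\big[|\xi_s^{\epsilon,u}|^2\big]du\,ds \\
&\quad+C\int_I\E\Big[\int_0^T\big(|R_s^{\epsilon,u}|^2+|\hat R_s^{\epsilon,u}|^2\big)ds\Big]du.
\end{align*}
Gronwall's lemma then reduces \eqref{derXV} to showing that the remainder term tends to $0$. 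Since $\Delta^\epsilon\to0$ in $L^2(I;\Sc^d)$, the interpolation arguments converge to $\theta_t^u$; the continuity of $\partial_x(b,\sigma)$, $\partial_a(b,\sigma)$ and $\partial\frac{\delta}{\delta m}(b,\sigma)$ (Assumption \ref{assumptionbasic}(i)--(ii)) makes the coefficient differences vanish, while the multiplying factors $\epsilon^{-1}\Delta^\epsilon$ and $\delta$ stay bounded in $L^2$, and a dominated-convergence argument yields the claim.

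The delicate step is the vanishing of the measure-derivative contribution to the remainder: the coefficient $\partial\frac{\delta}{\delta m}b(u,\cdot)(\tilde u,\cdot)$ is only of linear, not bounded, growth and is contracted against $\epsilon^{-1}\tilde\Delta^\epsilon$, which merely remains bounded in $L^2$. Passing to the limit thus requires combining the uniform continuity of the derivatives on the compact set swept out by the interpolation with the $L^2$-boundedness of the normalized increment over the double index $I\times I$ and over both probability spaces, e.g. by extracting subsequences converging almost everywhere and invoking dominated convergence with the integrable dominating function supplied by the growth bounds of Assumption \ref{assumptionbasic} and the moment estimates on $X,X^\epsilon,\alpha$.
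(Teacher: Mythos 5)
Your proposal is correct and follows essentially the same route as the paper: represent $X$, $X^{\epsilon}$, $V$ by Borel functions of $(u,t,W^u_{.\wedge t},U^u)$ for the measurability, set up the SDE for the normalized error $\epsilon^{-1}(X^{\epsilon,u}-X^u)-V^u$, expand $b,\sigma$ along the linear interpolation between $\theta^u_t$ and $\theta^{\epsilon,u}_t$ using Lemma \ref{lemma 1bis}, isolate a linear part plus remainders, and close with BDG, Fubini and Gr\"onwall, the remainders vanishing by continuity of the derivatives and dominated convergence. The only cosmetic difference is that you freeze the leading coefficients at the base point $\theta^u_t$ while the paper keeps them at the interpolation point and puts the coefficient differences (times the $L^2$-bounded factors $\delta$, $V$, $\tilde V$) into the remainders; you also correctly single out, and treat more explicitly than the paper does, the delicate measure-derivative remainder where the linearly growing kernel is contracted against the merely $L^2$-bounded normalized increment.
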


\begin{proof}
Let $\epsilon > 0$. The mesurability of $u \mapsto \P_{(X^u,X^{\epsilon,u},V^u)}$ is a consequence of the representation of $X^u$ and $V^u$ and $X^{\epsilon,u}$ by measurable mappings $\mrx$, $\mrx^{\epsilon}$ and $\mrv$  such that 

\begin{align}
    X_t^u = \mrx(u,t,W^u_{. \wedge t}, U^u), \quad  X_t^{\epsilon,u} = \mrx^{\epsilon}(u,t,W^u_{. \wedge t},U^u), \quad V_t^u = \mrv(u,t,W^u_{. \wedge t}, U^u),
\end{align}
    We now prove \reff{derXV}. We define $\theta_t^{\epsilon,u} = (X_t^{\epsilon,u}, \P_{X_t^{\epsilon,.}},\alpha_t^{\epsilon,u})$ and $V_t^{\epsilon,u} = \frac{1}{\epsilon}(X_t^{\epsilon,u} - X_t^u)- V_t^u$ for  $t\in[0 ; T]$, and  $u \in I$. From the dynamics of $X$, $X^\eps$ and $V$ we have
\begin{align}\label{dynVeps}
    dV_t^{\epsilon,u}
    &=V_t^{\epsilon,u,1} \d t + V_t^{\epsilon,u,2} \d W_t^u, \notag
\end{align}
with
\begin{align}
V_t^{\epsilon,u,1} &~=~\frac{1}{\epsilon}\big[b(u,\theta_t^{\epsilon,u}) - b(u,\theta_t^u) \big] - \partial_{x} b(u,\theta_t^u)V_t^u - \partial_{a} b(u,\theta_t^u)\delta_t^u -\int_{I} \mathbb{\tilde{E}}\left[\partial  \frac{\delta }{\delta m}b(u,\theta_t^u)(\tilde{u},\tilde{X}_t^{\tilde{u}})\tilde{V}_t^{\tilde{u}} \right] \d \tilde{u}  \;, \\
V_t^{\epsilon,u,2}    &~=~\frac{1}{\epsilon}\big[\sigma(u,\theta_t^{\epsilon,u}) - \sigma(u,\theta_t^u)\big] - \partial_{x} \sigma(u,\theta_t^u)V_t^u - \partial_{a} \sigma(u,\theta_t^u)\delta_t^u - \int_{I} \mathbb{\tilde{E}}\left[\partial  \frac{\delta }{\delta m}\sigma(u,\theta_t^u)(\tilde{u},\tilde{X}_t^{\tilde{u}})\tilde{V}_t^{\tilde{u}}\right] \d \tilde{u}  \;.
    \end{align}
We next define 
\begin{align}
    X_t^{\epsilon,\lambda,u} = X_t^u + \lambda \epsilon (V_t^{\epsilon,u} + V_t^{u})\;,\quad \alpha_t^{\epsilon,\lambda,u} = \alpha_t^{u} + \lambda \epsilon \delta_t^u~~\mbox{ and }~~\theta_t^{\epsilon,\lambda,u}=(X_t^{\epsilon,\lambda,u},\P_{X_t^{\epsilon,\lambda,.}},\alpha_t^{\epsilon,\lambda,u}),
\end{align}
for $t \in [0,T]$, $u \in I$ and $\lambda \in [0,1]$.
From Lemma \ref{eq : lemma1}, we have 
\begin{align}
    \frac{1}{\epsilon}\left[b(u,\theta_t^{\epsilon,u}) - b(u,\theta_t^{u}) \right] &= \int_{0}^{1} \partial_{x}b(u,\theta_t^{\epsilon,\lambda,u})(V_t^{\epsilon,u} + V_t^u) \d \lambda + \int_{0}^{1}  \partial_{a}b(u,\theta_t^{\epsilon,\lambda,u})\delta_t^u \d \lambda \notag \\
    &\quad + \int_{0}^{1} \int_{I} \mathbb{\tilde{E}}\left[\partial  \frac{\delta }{\delta m}b(u,\P_{X_t^{\epsilon,\lambda,.}},{X}_t^{\epsilon,\lambda,u},\alpha_t^{\epsilon,\lambda,u})(\tilde{u},\tilde{X}_t^{\epsilon,\lambda,\tilde{u}})(\tilde{V}_t^u+ \tilde{V}_t^{\epsilon,u}) \right] \d \tilde{u}  \d \lambda.
\end{align}
Plugging this expression in $V_t^{\epsilon,u,1}$ gives
\begin{align}
    V_t^{\epsilon,u,1}&= \int_{0}^{1} \partial_x b(u,\theta_t^{\epsilon,\lambda,u}) V_t^{\epsilon,u} d\lambda + \int_{0}^{1} \int_{I} \mathbb{\tilde{E}}\left[\partial  \frac{\delta }{\delta m}b(u,\P_{X_t^{\epsilon,\lambda,.}},{X}_t^{\epsilon,\lambda,u},\alpha_t^{\epsilon,\lambda,u})(\tilde{u},\tilde{X}_t^{\epsilon,\lambda,\tilde{u}})\tilde{V_t}^{\epsilon,u} \right] \d \tilde{u}  \d \lambda 
    \notag \\
    &\quad+ \int_{0}^{1} \left[\partial_{a}b(u,\theta_t^{\epsilon,\lambda,u}) - \partial_{a}b(u,\theta_t^{u})\right] \delta_t^u \d\lambda +  \int_{0}^{1}  \left[\partial_{x}b(u,\theta_t^{\epsilon,\lambda,u})-\partial_{x} b(u,\theta_t^{u})\right] V_t^u  \d \lambda \notag \\
    &\quad+ \int_{0}^{1} \int_{I} \mathbb{\tilde{E}}\bigg[\Big(\partial  \frac{\delta }{\delta m}b(u,\P_{X_t^{\epsilon,\lambda,.}},{X}_t^{\epsilon,\lambda,u},\alpha_t^{\epsilon,\lambda,u})(\tilde{u},\tilde{X}_t^{\epsilon,\lambda,\tilde{u}})-\partial  \frac{\delta }{\delta m}b(u,\P_{X_t^{.}},X_t^{u},\alpha_t^{u})(\tilde{u},\tilde{X}_t^{\tilde{u}})\Big.\tilde{V}_t^{\tilde{u}} \bigg] \d \tilde{u}  \d \lambda  \notag \\
    &= \int_{0}^{1} \partial_x b(u,\theta_t^{\epsilon,\lambda,u}) V_t^{\epsilon,u} d\lambda + \int_{0}^{1} \int_{I} \mathbb{\tilde{E}}\left[\partial  \frac{\delta }{\delta m}b(u,\P_{X_t^{\epsilon,\lambda,.}},{X}_t^{\epsilon,\lambda,u},\alpha_t^{\epsilon,\lambda,u})(\tilde{u},\tilde{X}_t^{\epsilon,\lambda,\tilde{u}})\tilde{V_t^{\epsilon,u}} \right] \d \tilde{u}  \d \lambda  \notag \\
    &\quad + I_t^{\epsilon,u,1} + I_t^{\epsilon,u,2} + I_t^{\epsilon,u,3},
\end{align}
where
\begin{align}
    I_t^{\epsilon,u,1} = & \int_{0}^{1} \big[\partial_{a}b(u,\theta_t^{\epsilon,\lambda,u}) - \partial_{a}b(u,\theta_t^{u})\big]  \delta_t^u \d \lambda\;,\\
    I_t^{\epsilon,u,2} = &\int_{0}^{1}  \big[\partial_{x}b(u,\theta_t^{\epsilon,\lambda,u})-\partial_{x} b(u,\theta_t^{u})\big] V_t^u  \d \lambda\;,\\
    I_t^{\epsilon,u,3} = & \int_{0}^{1} \int_{I} \mathbb{\tilde{E}}\Big[\big(\partial  \frac{\delta }{\delta m}b(u,\P_{X_t^{\epsilon,\lambda,.}},{X}_t^{\epsilon,\lambda,u},\alpha_t^{\epsilon,\lambda,u})(\tilde{u},\tilde{X}_t^{\epsilon,\lambda,\tilde{u}})-\partial  \frac{\delta }{\delta m}b(u,\P_{X_t^{.}},X_t^{u},\alpha_t^{u})(\tilde{u},\tilde{X}_t^{\tilde{u}})\big) \tilde{V}_t^{\tilde{u}} \Big] \d \tilde{u}  \d \lambda\;.
\end{align}
From \eqref{dynVeps} and since $V_0^{\eps,u}=0$ we have using Young inequality
%
\begin{align}
    |V_t^{\epsilon,u}|^2 &\leq C \bigg( \big|\int_{0}^{t} V_s^{\epsilon,u,1} \d s \big|^2 + \big|\int_{0}^{t} V_s^{\epsilon,u,2} \d W_s^u  \big|^2 \bigg), 
\end{align}
where $C$ is a generic constant, $i.e.$ depending on $T$ but not depending on $\epsilon$, which may vary from line to line. 
Taking the supremum and  the expectation and now using Jensen and BDG's inequality, we therefore have 
\begin{align}
     \mathbb{E} \big[\underset{0 \leq t \leq r}{\text{ sup}} |V_t^{\epsilon,u}|^2 \big] ~\leq~ C \bigg( \int_{0}^{r}  \mathbb{E} \big[ |V_t^{\epsilon,u,1}|^2 \big] \d t + | \int_{0}^{r} \mathbb{E} \big[ |V_t^{\epsilon,u,2}|^2 \big] \d t \bigg),
\end{align}
for $r \in [0,T]$. Using Fubini-Tonelli theorem we get
\begin{align}
     \int_{I} \mathbb{E} \big[\underset{0 \leq t \leq r}{\text{ sup}} |V_t^{\epsilon,u}|^2 \big] \d u \leq  C \bigg( \int_{0}^{r} \int_{I} \mathbb{E} \big[ |V_t^{\epsilon,u,1}|^2 \big] \d u \d t + \int_{0}^{r} \int_{I} \mathbb{E} \big[ | V_t^{\epsilon,u,2}|^2 \big] \d u \d t \bigg),
\end{align}
for $r \in [0,T]$. From Assumption \ref{assumptionbasic} and Cauchy Schwarz inequality we have
\begin{align}
   \int_0^T \int_I \E\big[|I_t^{\epsilon,u,i}|^2 \big] \d u \d t~\leq~C_\eps    \int_0^T\int_I \E\big[|V_t^{u}|^2+|\alpha_t^{u}|^2+|\beta_t^{u}|^2 \big] \d u \d t~:=~\bar C_\eps,
\end{align}
for $i \in \lbrace 1,2,3 \rbrace$ where $\lim_{\eps\rightarrow0}C_\eps=0$. This gives from the definition of $V^{\epsilon,u,1}$ and Assumption \ref{assumptionbasic}
\begin{align}
    \int_I \mathbb{E} \big[ \big|V_r^{\epsilon,u,1} \big|^2 \big]  \d u ~\leq~ \bar C_\epsilon  + C\int_{I}\mathbb{E} \big[\underset{0 \leq t \leq r}{\text{sup}} \big|V_t^{\epsilon,u} \big|^2  \big] \d u 
    .
\end{align}
The  term $V_t^{\epsilon,u,2}$ is handled in a similar way after using Jensen and BDG's inequality such that we have a similar estimate. We finally get  
\begin{align}\label{eq : borne sup}
    \int_{I} \mathbb{E} \big[\underset{0 \leq t \leq r}{\text{ sup}} \big|V_t^{\epsilon,u} \big|^2 \big] \d u  \leq 2\bar C_\epsilon+C\int_{0}^{r} \int_{I} \mathbb{E} \big[\underset{0 \leq s \leq t}{\text{ sup}} \big|V_s^{\epsilon,u} \big|^2 \big] \d u \d t,
\end{align}
for $r\in[0,T]$. Applying Grönwall's lemma  and sending $\epsilon$ to 0 we get \reff{derXV}.
\end{proof}

\begin{Lemma}\label{eq : Lemma Gateaux J}
    The function $\alpha \in \Ac \mapsto J(\alpha)$ is  G\^ateaux differentiable with G\^ateaux derivative in the direction  $\beta\in \Ac$ given by
    \begin{align}
     {\lim_{\epsilon \to 0} } \frac{1}{\epsilon} \big( J(\alpha + \epsilon (\beta-\alpha)) - J(\alpha)\big) 
    &= \\
    \int_{I} \mathbb{E}\left[\int_{0}^{T} \partial_{x} f(u,\theta_t^u) \cdot  V_t^u + \partial_{a} f(u,\theta_t^{u}) \cdot (\beta_t^u-\alpha_t^u) + \int_{I} \mathbb{\tilde{E}} \Big[ \partial\frac{\delta}{\delta m} f(u,\theta_t^u)(\tilde{u}, \tilde{X}_t^{\tilde{u}}) \cdot  \tilde{V}_t^{\tilde{u}}\Big] \d\tilde{u} \d t \right] \d u \notag 
    &\\
    + \int_{I} \mathbb{E}\left[ \partial_x g(u,X_T^u, \P_{X_T^.}) \cdot  V_T^u + \int_{I}\mathbb{\tilde{E}}\Big[\partial \frac{\delta }{\delta m}g (u, X_T^u,\P_{X_T^.})(\tilde{u},\tilde{X}_T^{\tilde{u}}) \cdot \tilde{V}_T^{\tilde{u}}\Big] \d\tilde{u}\right] \d u\;. &
\end{align}
\begin{proof} We use the same notations as in the proof of Lemma \ref{lemDerX=V} and we recall that
$\alpha^{\epsilon}=(\alpha^u + \epsilon \beta^u)_{u \in I}$ $(X^{\epsilon,u})_{u\in I} =(X^{\alpha^{\epsilon},u})_{u\in I}$,  $(\theta_t^{\epsilon,u})_{u\in I} = (X^{\epsilon,u}, \P_{X^{\epsilon,.}},\alpha^{\epsilon,u})_{u\in I}$, $(V^{\epsilon,u})_{u\in I} = (\frac{1}{\epsilon}(X^{\epsilon,u} - X^u)- V^u)_{u\in I}$,   $(X^{\epsilon,\lambda,u})_{u\in I} = (X^u + \lambda \epsilon (V^{\epsilon,u} + V^{u}))_{u\in I}$, $(\alpha^{\epsilon,\lambda,u})_{u\in I} = (\alpha^{u} + \lambda \epsilon \beta^u)_{u\in I}$ and $(\theta_t^{\epsilon,\lambda,u})_{u\in I}=(X^{\epsilon,\lambda,u},\P_{X^{\epsilon,\lambda,.}},\alpha^{\epsilon,\lambda,u})_{u\in I}$. 
From the definition of $J(\alpha)$, we have 
\begin{align}
      \underset{\epsilon \to 0}{\text{lim}} \frac{1}{\epsilon} \big( J(\alpha + \epsilon (\beta - \alpha)) - J(\alpha) \big)  &=\\  \underset{\epsilon \to 0}{\text{lim}} \frac{1}{\epsilon} \int_{I}    \mathbb{E} \Big[\int_{0}^{T} \big[f(u,\theta_t^{\epsilon,u}) - f(u,\theta_t^u)\big] \d t 
     +  g(u,X_T^{\epsilon,u},\P_{X_T^{\epsilon,.}}) -  g(u,X_T^{u},\P_{X_T^.}) \Big]   \d u. &
\end{align}
Using Taylor formula we have
\begin{align}
   \lim_{\epsilon \to 0} \int_{I} \frac{1}{\epsilon} \E \Big[\int_{0}^{T} \big(f(u,\theta_t^{\epsilon,u}) - f(u,\theta_t^u) \big) \d t \Big] \d u & = \\
      \lim_{\epsilon \to 0}  \frac{1}{\epsilon} \int_{I} \mathbb{E} \bigg[ \int_{0}^{T} \int_{0}^{1}  \Big(\partial_x f(u,\theta_t^{\lambda,\epsilon,u}) \cdot (V_t^{\epsilon,u} + V_t^u) + \partial_{a} f(u,\theta_t^{\lambda,\epsilon,u}) \cdot  (\beta_t^u - \alpha_t^u)  \notag & \\
   + \int_{I} \mathbb{\tilde{E}}\Big[\partial \frac{\delta }{\delta m}f(u,\theta_{t}^{\lambda,\epsilon,u})(\tilde{u},\tilde{X}_t^{\lambda,\epsilon,\tilde{u}}) \cdot  (\tilde{V}_t^{\epsilon,\tilde{u}} + \tilde{V}_t^{\tilde{u}})\d \tilde{u}\Big]\Big)\d \lambda \d t\bigg] \d u\;. &
\end{align}
From Assumption \ref{assumptionbasic} and Lemma \ref{lemDerX=V} we can apply the dominated convergence and we get 
%
\begin{align}
   \lim_{\epsilon \to 0} \int_{I} \frac{1}{\epsilon} \E \Big[\int_{0}^{T} \big(f(u,\theta_t^{\epsilon,u}) - f(u,\theta_t^u) \big) \d t \Big] \d u  & =\\ \int_{I} \mathbb{E}\bigg[\int_{0}^{T} \partial_{x} f(u,\theta_t^u) \cdot V_t^u + \partial_{a} f(u,\theta_t^{u}) \cdot (\beta_t^u - \alpha_t^u) + \int_{I} \mathbb{\tilde{E}} \Big[ \partial \frac{\delta }{\delta m}f(u,\theta_t^u)(\tilde{u}, \tilde{X}_t^{\tilde{u}}) \cdot \tilde{V}_t^{\tilde{u}} \d \tilde{u}\Big] \d t \bigg] \d u\;. & 
\end{align}
The same arguments give
\begin{align}
   \lim_{\epsilon \to 0} \int_{I} \frac{1}{\epsilon} \E \Big[\big(g(u,X_T^u, \P_{X_T^.}) - g(u,X_T^{\epsilon,u}, \P_{X_T^{\epsilon,.}}) \big)  \Big] \d u  & =\\  \int_{I} \mathbb{E}\left[ \partial_x g(u,X_T^u, \P_{X_T^.}) \cdot  V_T^u + \int_{I}\mathbb{\tilde{E}}\Big[\partial \frac{\delta }{\delta m}g (u, X_T^u,\P_{X_T^.})(\tilde{u},\tilde{X}_T^{\tilde{u}}) \cdot \tilde{V}_T^{\tilde{u}}\Big] \d\tilde{u}\right] \d u &, 
\end{align}
which ends the proof.

\end{proof}
\end{Lemma}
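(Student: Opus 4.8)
The plan is to evaluate the limit of the difference quotient $\frac{1}{\epsilon}\big(J(\alpha+\epsilon\delta)-J(\alpha)\big)$, with $\delta=\beta-\alpha$, by a first-order expansion along the interpolating path between the two state/measure/control configurations, and then to pass to the limit by dominated convergence. First I would split the increment into a running part $\int_I\E\big[\int_0^T\big(f(u,\theta_t^{\epsilon,u})-f(u,\theta_t^u)\big)\,\d t\big]\,\d u$ and a terminal part $\int_I\E\big[g(u,X_T^{\epsilon,u},\P_{X_T^{\epsilon,.}})-g(u,X_T^u,\P_{X_T^.})\big]\,\d u$, and treat the two by the same scheme. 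Throughout I reuse the notation of Lemma \ref{lemDerX=V}, in particular $V_t^{\epsilon,u}=\frac{1}{\epsilon}(X_t^{\epsilon,u}-X_t^u)-V_t^u$, the interpolation $X_t^{\epsilon,\lambda,u}=X_t^u+\lambda\epsilon(V_t^{\epsilon,u}+V_t^u)$, $\alpha_t^{\epsilon,\lambda,u}=\alpha_t^u+\lambda\epsilon\delta_t^u$, and $\theta_t^{\epsilon,\lambda,u}=(X_t^{\epsilon,\lambda,u},\P_{X_t^{\epsilon,\lambda,.}},\alpha_t^{\epsilon,\lambda,u})$, so that $\theta^{\epsilon,0,u}=\theta^u$ and $\theta^{\epsilon,1,u}=\theta^{\epsilon,u}$.

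For the running part, I would apply the fundamental theorem of calculus to $\lambda\mapsto f(u,\theta_t^{\epsilon,\lambda,u})$ on $[0,1]$. By the joint differentiability assumed in Assumption \ref{assumptionbasic} and the directional formula for the law argument from the Remark following Lemma \ref{lemma 1bis}, the chain rule gives
\begin{align*}
\frac{1}{\epsilon}\big(f(u,\theta_t^{\epsilon,u})-f(u,\theta_t^u)\big)
&=\int_0^1\Big(\partial_x f(u,\theta_t^{\epsilon,\lambda,u})\cdot(V_t^{\epsilon,u}+V_t^u)+\partial_a f(u,\theta_t^{\epsilon,\lambda,u})\cdot\delta_t^u\\
&\qquad+\int_I\tilde{\E}\big[\partial\dmu f(u,\theta_t^{\epsilon,\lambda,u})(\tilde u,\tilde X_t^{\epsilon,\lambda,\tilde u})\cdot(\tilde V_t^{\epsilon,\tilde u}+\tilde V_t^{\tilde u})\big]\,\d\tilde u\Big)\,\d\lambda.
\end{align*}
The same identity holds for $g$ with the terminal data and $V_T$. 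It then remains to let $\epsilon\to0$ inside the $\d u$-, $\d t$- and $\d\lambda$-integrals.

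The limit is handled by dominated convergence. From Lemma \ref{lemDerX=V}, $V^\epsilon\to0$ in $L^2(I;\Sc^d)$, hence $X^{\epsilon,\lambda}\to X$ and $\bd(\P_{X^{\epsilon,\lambda,.}},\P_{X^.})\to0$ while $\alpha^{\epsilon,\lambda}\to\alpha$; by the continuity of $\partial_x f,\partial_a f,\partial\dmu f$ (and their $g$-analogues) in Assumption \ref{assumptionbasic}, the integrands converge pointwise to the claimed expression, with $V^\epsilon$ and $\tilde V^\epsilon$ replaced by $0$. For the domination I would invoke the local boundedness, uniform in $u$, of $\partial_x f,\partial_a f$ and of $\int_I\tilde{\E}[|\partial\dmu f(u,\cdot)(\tilde u,X^{\tilde u})|^2]\,\d\tilde u$ from Assumption \ref{assumptionbasic}(iii)--(iv), together with the uniform $L^2$ bounds on $V^\epsilon+V$, $\delta$ and (via the independent copy) $\tilde V^\epsilon+\tilde V$, combined with Cauchy--Schwarz. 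I expect the main obstacle to be precisely this domination of the law-derivative term: it is a double integral over $I\times I$ coupled through the independent copy $(\tilde X,\tilde V)$, so one must carefully combine the quadratic growth of $\partial\dmu f$, Cauchy--Schwarz across the inner $\d\tilde u$-integral, and the convergence of $V^\epsilon$ in the full norm $\|\cdot\|_{L^2(I;\Sc^d)}$ (not merely $\P$-a.s.\ for each fixed $u$) in order to bound the integrand uniformly in $\epsilon$ and legitimately exchange limit and integration.
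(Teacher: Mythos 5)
Your proposal is correct and takes essentially the same route as the paper's proof: the same splitting into running and terminal parts, the same first-order expansion along the interpolation $\theta^{\epsilon,\lambda,u}$ via the chain rule (using the directional formula of Lemma \ref{lemma 1bis} for the measure argument), and the same passage to the limit by dominated convergence based on Lemma \ref{lemDerX=V} and Assumption \ref{assumptionbasic}. Your explicit attention to dominating the law-derivative term via Cauchy--Schwarz and the $L^2(I;\Sc^d)$ convergence of $V^\epsilon$ is a point the paper leaves implicit, but it does not change the argument.
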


\begin{Lemma}\label{eq : Lemma int YV}
 Let $(Y,Z) = (Y_t^u,Z_t^u)_{u \in I, 0 \leq t \leq T}$ be an adjoint process according to Definition \ref{def : adjoint processes}. We then have
\begin{align}
    \int_{I} \mathbb{E}[Y_T^u \cdot V_T^u] du  &= \int_{I} \mathbb{E} \bigg[\int_{0}^{T} Y_t^u \cdot  \big(\partial_{a} b(u,\theta_t^u)(\beta_t^u - \alpha_t^u) \big)  + Z_t^u : \big(\partial_{a} \sigma(u,\theta_t^u) (\beta_t^u-\alpha_t^u) \big) - \partial_{x} f(u,\theta_t^u) \cdot V_t^u \notag \\
     &\quad- \int_{I} \mathbb{\tilde{E}}\Big[\partial  \frac{\delta }{\delta m}f(u,X_t^u,\P_{X_t^.},\alpha_t^u)(\tilde{u},\tilde{X}_t^{\tilde{u}}) \cdot \tilde{V}_t^{\tilde{u}}\Big] \d \tilde{u}  \d t \bigg] \d u\;. 
\end{align}
\end{Lemma}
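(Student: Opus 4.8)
The plan is to apply Itô's product formula to the scalar process $t\mapsto Y_t^u\cdot V_t^u$ for each fixed label $u\in I$, then integrate in time, take expectations, and finally integrate over $u\in I$. Since $V^u$ and $Y^u$ are both driven by the same Brownian motion $W^u$, with diffusion coefficient $\Sigma_t^u:=\hat{\gamma}_t^uV_t^u+\int_I\tilde{\E}[\partial\frac{\delta}{\delta m}\sigma(u,\theta_t^u)(\tilde u,\tilde X_t^{\tilde u})\tilde V_t^{\tilde u}]\d\tilde u+\hat{\eta}_t^u\delta_t^u$ for $V^u$ (read off from \eqref{eq : SDE V}) and $Z_t^u$ for $Y^u$, the product rule produces the cross-variation term $Z_t^u:\Sigma_t^u\,\d t$. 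Using $V_0^u=0$ and the fact that $V\in L^2(I;\Sc^d)$ (the variation system $(X,V)$ falls into the framework of Theorem \ref{THMexistUniqX}), $Y\in L^2(I;\Sc^d)$, $Z\in L^2(I;\Hc^{2,d\times n})$, together with the boundedness of $\partial_x(b,\sigma)$, $\partial_a(b,\sigma)$ from Assumption \ref{assumptionbasic}(i)--(ii) and the integrability built into Definition \ref{def : adjoint processes}, the stochastic integrals are genuine martingales and vanish in expectation. Integrating over $u$ and invoking Fubini--Tonelli, I obtain $\int_I\E[Y_T^u\cdot V_T^u]\,\d u$ as the time-integral of a drift that I organize into three groups.

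Next I expand everything using $H=b\cdot y+\sigma:z+f$, so that $\partial_xH=\partial_x(b\cdot y)+\partial_x(\sigma:z)+\partial_xf$. The local (in $u$) terms are $Y_t^u\cdot\gamma_t^uV_t^u+Z_t^u:\hat{\gamma}_t^uV_t^u$ (coming from $Y^u$ paired against the drift of $V^u$, and from $Z^u:\Sigma^u$) set against $-V_t^u\cdot\partial_xH$ (coming from the drift of $Y^u$). Recalling $\gamma=\partial_xb$ and $\hat{\gamma}=\partial_x\sigma$, a direct index computation gives $Y_t^u\cdot\gamma_t^uV_t^u+Z_t^u:\hat{\gamma}_t^uV_t^u=V_t^u\cdot\partial_x\big(b\cdot Y_t^u+\sigma:Z_t^u\big)=V_t^u\cdot\big(\partial_xH-\partial_xf\big)$, so these terms cancel up to $-\partial_xf(u,\theta_t^u)\cdot V_t^u$, which is one of the right-hand side terms. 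The control contributions $Y_t^u\cdot\eta_t^u\delta_t^u+Z_t^u:\hat{\eta}_t^u\delta_t^u$ are precisely $Y_t^u\cdot\big(\partial_ab(u,\theta_t^u)(\beta_t^u-\alpha_t^u)\big)+Z_t^u:\big(\partial_a\sigma(u,\theta_t^u)(\beta_t^u-\alpha_t^u)\big)$, the first two terms on the right-hand side.

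It remains to handle the mean-field interaction terms, which is the \emph{main obstacle}. From $Y^u$ against the drift of $V^u$ and from $Z^u:\Sigma^u$ arise the $b$- and $\sigma$-interaction terms $\int_I\tilde{\E}[\,\cdot\,\partial\frac{\delta}{\delta m}(b,\sigma)(u,\theta_t^u)(\tilde u,\tilde X_t^{\tilde u})\tilde V_t^{\tilde u}]\d\tilde u$, whereas the drift of $Y^u$ contributes $-\int_I\tilde{\E}[\partial\frac{\delta}{\delta m}H(\tilde u,\tilde X_t^{\tilde u},\P_{X_t^.},\tilde Y_t^{\tilde u},\tilde Z_t^{\tilde u},\tilde\alpha_t^{\tilde u})(u,X_t^u)]\d\tilde u$. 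The crucial step is a relabeling argument: after integrating over $u$ and over the product space $\Omega\times\tilde\Omega$, I swap the roles of $(u,X^u,Y^u,Z^u,V^u)$ and $(\tilde u,\tilde X^{\tilde u},\tilde Y^{\tilde u},\tilde Z^{\tilde u},\tilde V^{\tilde u})$, the common measure argument $\P_{X_t^.}$ being invariant under this exchange. This is legitimate because $(\tilde X,\tilde Y,\tilde Z,\tilde V,\tilde\alpha)$ is an independent identically distributed copy of $(X,Y,Z,V,\alpha)$, both labels range over $I$ under Lebesgue measure, and the representations by measurable functions of $(W^u,U^u)$ provided by Theorem \ref{THMexistUniqX} and Definition \ref{def : adjoint processes}(ii) render the integrand a genuine symmetric function of the two (label, state) coordinates. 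Expanding $\partial\frac{\delta}{\delta m}H$ into its $b$-, $\sigma$- and $f$-parts and applying this swap, the $b$- and $\sigma$-interaction contributions from $V^u$ against the drift of $Y^u$ cancel exactly the corresponding terms from the $V$-dynamics, while the $f$-part survives and, after the swap, becomes $-\int_I\tilde{\E}[\partial\frac{\delta}{\delta m}f(u,X_t^u,\P_{X_t^.},\alpha_t^u)(\tilde u,\tilde X_t^{\tilde u})\cdot\tilde V_t^{\tilde u}]\d\tilde u$. Collecting the three groups yields the claimed identity. The delicate points to verify are the integrability underpinning each Fubini exchange and the martingale property, and a careful check that the index contractions in $\partial_xH$ and $\partial\frac{\delta}{\delta m}H$ match those in the variation equation \eqref{eq : SDE V}.
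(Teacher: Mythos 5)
Your proof is correct and follows essentially the same route as the paper: Itô's product formula on $Y^u\cdot V^u$, vanishing of the martingale parts in expectation, cancellation of the local $\partial_x b$, $\partial_x\sigma$ terms against $\partial_x H-\partial_x f$, and a Fubini/label-swap argument exploiting that $(\tilde X,\tilde Y,\tilde Z,\tilde V,\tilde\alpha)$ is an i.i.d.\ copy to cancel the $b$- and $\sigma$-interaction terms inside $\partial\frac{\delta}{\delta m}H$, leaving only the $f$-part. The paper compresses the last step into one sentence ("by using Fubini Theorem as $\tilde\Theta^u$ is a copy of $\Theta^u$"), whereas you spell it out; the substance is identical.
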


\begin{proof}
Let $\Theta_t^u = (X_t^u,\P_{X_t^.}, Y_t^u,Z_t^u,\alpha_t^u)$ for $u\in I$ and $t\in[0,T]$ and $\tilde{\Theta}_t^u = (\tilde{X}_t^{u},\P_{X_t^.},Y_t^u,Z_t^u,\tilde{\alpha}_t^{u})$ where $(\tilde{X},\tilde{\alpha})$ is an independent copy of $(X,\alpha)$ defined on $(\tilde{\Omega},\tilde{\Fc},\tilde{\P})$.   From Itô's formula we have  
\begin{align}
        Y_T^u\cdot  V_T^u~ = ~& Y_0^u \cdot V_0^u + \int_{0}^{T} Y_t^u \cdot \d V_t^u + \int_{0}^{T} \d Y_t^u \cdot  V_t^u + \int_{0}^{T} \d \langle Y^u,V^u \rangle_t  \notag \\
          = ~& M_T^u + \int_{0}^{T} \bigg[ Y_t^u \cdot  \big(\partial_x b(u,\theta_t^u) V_t^u \big)+ Y_t^u \cdot \big(\partial_{a} b(u,\theta_t^u) (\beta_t^u - \alpha_t^u)\big)+ Y_t^u \cdot \int_{I} \mathbb{\tilde{E}}\Big[\partial  \frac{\delta }{\delta m}b(u,\theta_t^u)(\tilde{u},\tilde{X}_t^{\tilde{u}})\tilde{V}_t^{\tilde{u}} \Big] \d \tilde{u}   \notag \\
        &- \partial_x H(u,\Theta_t^u) \cdot  V_t^u  - \int_{I} \mathbb{\tilde{E}} \Big[\partial  \dmu{ H}(\tilde{u},\tilde{\Theta}_t^{\tilde{u}})(u,X_t^u) \cdot V_{t}^{u} \Big] \d \tilde{u}  \notag \\
        &+ Z_t^u : \big(\partial_x \sigma(u,\theta_t^u) V_t^u \big) + Z_t^u : \big(\partial_{a} \sigma(u,\theta_t^u) (\beta_t^u - \alpha_t^u) \big) + Z_t^u : \int_{I} \mathbb{\tilde{E}}\Big[\partial  \frac{\delta }{\delta m}\sigma(u,\theta_t^u)(\tilde{u},\tilde{X}_t^{\tilde{u}})\tilde{V}_t^{\tilde{u}} \Big] \d\tilde{u} \bigg] \d t , \notag 
\end{align}
where $M^u$ denotes a martingale with respect to the filtration $\F^u$ starting from $M_0^u=0$ such that $\E[M_T^u] = 0$. 
Taking the expectation, we get the result by using  Fubini Theorem as  $\tilde \Theta^u$ is a copy of $\Theta^u$ for all $u\in I$ .
\end{proof}
The following result expresses the G\^ateaux derivative using the Hamiltonian $H$. 
\begin{Corollary}\label{eq : corollary} We have
\begin{equation}\label{eq : Gateaux Derivatives J}
    \lim_{\epsilon \to 0} \frac{1}{\epsilon} \big( J(\alpha + \epsilon (\beta- \alpha)) - J(\alpha) \big) = \int_{I} \mathbb{E} \Big[\int_{0}^{T} \big[\partial_{a} H(u,X_t^u,\mathbb{P}_{X_t^.}, Y_t^u, Z_t^u,\alpha_t^u) \cdot( \beta_t^u - \alpha_t^u)\big] \d t \Big]  \d u.   
\end{equation}
\end{Corollary}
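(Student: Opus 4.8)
The plan is to combine the two preceding lemmas with the terminal condition of the adjoint system \eqref{eq : adjoint equations}. The starting point is the Gâteaux derivative computed in Lemma \ref{eq : Lemma Gateaux J}; the goal is to absorb its running $\partial_x f$ and $\partial\dmu f$ contributions, together with its terminal $g$-terms, into the quantity $\int_I \E[Y_T^u \cdot V_T^u]\,\d u$ supplied by Lemma \ref{eq : Lemma int YV}.

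First I would rewrite the terminal block of Lemma \ref{eq : Lemma Gateaux J} using $Y_T^u = \partial_x g(u,X_T^u,\P_{X_T^.}) + \int_I \tilde{\E}[\partial\dmu g(\tilde u,\tilde X_T^{\tilde u},\P_{X_T^.})(u,X_T^u)]\,\d\tilde u$. Pairing this with $V_T^u$ and integrating over $u \in I$ produces two pieces: one matches $\int_I \E[\partial_x g(u,X_T^u,\P_{X_T^.}) \cdot V_T^u]\,\d u$ verbatim, while the other is a double integral over $I \times I$ against the product expectation $\E \otimes \tilde{\E}$. The key manipulation is to relabel $(u,\tilde u)$ and simultaneously exchange the roles of the i.i.d.\ copies $(X,V)$ and $(\tilde X,\tilde V)$, which turns this double integral into $\int_I \E[\int_I \tilde{\E}[\partial\dmu g(u,X_T^u,\P_{X_T^.})(\tilde u,\tilde X_T^{\tilde u}) \cdot \tilde V_T^{\tilde u}]\,\d\tilde u]\,\d u$, exactly the second terminal term of Lemma \ref{eq : Lemma Gateaux J}. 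Hence the full terminal contribution equals $\int_I \E[Y_T^u \cdot V_T^u]\,\d u$.

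Second, I would substitute into this identity the expression for $\int_I \E[Y_T^u \cdot V_T^u]\,\d u$ given by Lemma \ref{eq : Lemma int YV}, and plug the result back into Lemma \ref{eq : Lemma Gateaux J}. The two $\partial_x f(u,\theta_t^u) \cdot V_t^u$ terms then cancel, as do the two running $\int_I \tilde{\E}[\partial\dmu f \cdot \tilde V]$ terms, leaving only the contributions carrying the control variation $\beta_t^u - \alpha_t^u$: namely $\partial_a f(u,\theta_t^u) \cdot (\beta_t^u-\alpha_t^u)$, $Y_t^u \cdot (\partial_a b(u,\theta_t^u)(\beta_t^u-\alpha_t^u))$ and $Z_t^u : (\partial_a \sigma(u,\theta_t^u)(\beta_t^u-\alpha_t^u))$. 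Using the transpose identities $Y \cdot (\partial_a b\,(\beta-\alpha)) = ((\partial_a b)^\top Y) \cdot (\beta-\alpha)$ and the analogue for $\sigma$ and $Z$, together with the definition \eqref{eq : Hamiltonien} of $H$ whose $a$-gradient is $\partial_a H = (\partial_a b)^\top y + (\partial_a\sigma)^\top{:}z + \partial_a f$, these three surviving terms coalesce into $\partial_a H(u,X_t^u,\P_{X_t^.},Y_t^u,Z_t^u,\alpha_t^u) \cdot (\beta_t^u-\alpha_t^u)$, which is precisely \eqref{eq : Gateaux Derivatives J}.

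The only genuinely non-bookkeeping step, and hence the main obstacle, is the symmetrization in the first paragraph: one must carefully justify the relabelling $u \leftrightarrow \tilde u$ together with the exchange of the two independent copies. This relies on $(\tilde X,\tilde V)$ being an independent copy of $(X,V)$ and on Fubini's theorem, whose applicability follows from the integrability and measurability hypotheses in Definition \ref{def : adjoint processes} and the $L^2$-estimates on $V$ established in Lemma \ref{lemDerX=V}.
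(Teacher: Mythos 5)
Your proposal is correct and takes essentially the same route as the paper: the terminal $g$-terms of Lemma \ref{eq : Lemma Gateaux J} are identified with $\int_I \E[Y_T^u\cdot V_T^u]\,\d u$ by the Fubini/symmetrization swap of $(u,\tilde u)$ together with the independent copies, after which Lemma \ref{eq : Lemma int YV} is substituted in and the $\partial_x f$ and $\partial\frac{\delta}{\delta m}f$ terms cancel, leaving exactly $\partial_a H\cdot(\beta-\alpha)$. The cancellation bookkeeping you spell out is left implicit in the paper's one-line conclusion, but the argument is identical.
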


\begin{proof}
Using Fubini's theorem we have 
\begin{align}
&\int_{I} \mathbb{E}\bigg[ \partial_x g(u,X_T^u, \P_{X_T^.}) \cdot  V_T^u + \int_{I}\mathbb{\tilde{E}} \Big[\partial \frac{\delta }{\delta m}g (u, X_T^u,\P_{X_T^.})(\tilde{u},\tilde{X}_T^{\tilde{u}}) \cdot  \tilde{V}_T^{\tilde{u}}\Big] \d \tilde{u}\bigg] \d u \notag \\
=&\int_{I} \mathbb{E}\bigg[ \partial_x g(u,X_T^u,\P_{X_T^.}) \cdot  V_T^u + \int_{I}\mathbb{\tilde{E}} \Big[\partial \frac{\delta }{\delta m}g (\tilde{u}, \tilde{X}_T^{\tilde{u}},\P_{X_T^.})(u,X_T^u) \cdot V_T^{u} \Big] \d \tilde{u}\bigg] \d u \notag \\
=&\int_{I} \mathbb{E}[Y_T^u \cdot V_T^u] \d u \notag.
\end{align}
Then, the result follows from Lemmata \ref{eq : Lemma Gateaux J} and \ref{eq : Lemma int YV}. 
\end{proof}

\begin{Theorem}\label{Theorem : Neccessaryr Condition}
Let Assumption \ref{assumptionbasic} hold and  assume that the Hamiltonian  $H$ is a convex function in its last variable, i.e. the function $a \in A\mapsto H(u,x,\mu,y,z,a)$ is convex for any $(u,x,\mu,y,z)\in I\times \R^d\times L^2(\Pc_2(\R^d))\times \R^d\times\R^{d \times n}$. Let 
$\alpha=(\alpha_t^u)_{u \in I, 0 \leq t \leq T}$
 be an optimal control,  $X=(X_t^u)_{u \in I,0 \leq t \leq T}$ the associated  controlled state process and $(Y,Z)=(Y^u_t,Z^u_t)_{u \in I, 0 \leq t \leq T}$  the associated adjoint processes. 
Then we have 
 \begin{equation}
   H(u,X_t^u,\P_{X_t^.},Y_t^u,Z_t^u,\alpha_t^u)~ \leq~ H(u,X_t^u,\P_{X_t^.},Y_t^u,Z_t^u,a) \quad dt \otimes d\mathbb{P}-a.e. 
 \end{equation}
  for almost every $u \in I$, for all $a\in A$.
\end{Theorem}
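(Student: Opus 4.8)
The plan is to convert the optimality of $\alpha$ into a variational inequality through the Gâteaux-derivative formula of Corollary \ref{eq : corollary}, and then localize that inequality to a pointwise statement by exploiting the feedback representation of all the processes involved.

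First I would fix an arbitrary $\beta\in\Ac$. Since $A$ is convex, the control $\alpha^\epsilon:=\alpha+\epsilon(\beta-\alpha)$ belongs to $\Ac$ for every $\epsilon\in[0,1]$, so optimality gives $J(\alpha^\epsilon)\geq J(\alpha)$. Dividing by $\epsilon>0$ and letting $\epsilon\to0$, Corollary \ref{eq : corollary} yields
\[
\int_I \E\Big[\int_0^T \partial_a H(u,X_t^u,\P_{X_t^.},Y_t^u,Z_t^u,\alpha_t^u)\cdot(\beta_t^u-\alpha_t^u)\,\d t\Big]\d u \;\geq\; 0,
\]
valid for every $\beta\in\Ac$.

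The core of the argument is the localization. Using the measurable representations $X_t^u=\mrx(u,t,W^u_{.\wedge t},U^u)$ from Theorem \ref{THMexistUniqX}, together with $Y_t^u=\mry(u,t,W^u_{.\wedge t},U^u)$, $Z_t^u=\mrz(u,t,W^u_{.\wedge t},U^u)$ from Definition \ref{def : adjoint processes}(ii) and the feedback form $\alpha_t^u=\alpha(u,t,W^u_{.\wedge t},U^u)$, I introduce the Borel feedback map
\[
h(u,t,w,v):=\partial_a H\big(u,\mrx(u,t,w,v),\P_{X_t^.},\mry(u,t,w,v),\mrz(u,t,w,v),\alpha(u,t,w,v)\big),
\]
so that $\partial_a H(u,X_t^u,\P_{X_t^.},Y_t^u,Z_t^u,\alpha_t^u)=h(u,t,W^u_{.\wedge t},U^u)$ almost surely. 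For a fixed constant $a_0\in A$ I set $\Gamma:=\{(u,t,w,v):h(u,t,w,v)\cdot(a_0-\alpha(u,t,w,v))<0\}$, which is Borel, and define $\beta_t^u:=a_0\,\mathbf{1}_\Gamma(u,t,W^u_{.\wedge t},U^u)+\alpha_t^u\,\mathbf{1}_{\Gamma^c}(u,t,W^u_{.\wedge t},U^u)$. This $\beta$ is an admissible feedback control valued in $A$ (and square integrable, since $a_0$ is constant and $\alpha\in\Ac$); inserting it into the variational inequality forces the integral of $\mathbf{1}_\Gamma\,h\cdot(a_0-\alpha)$ to be $\geq0$, while on $\Gamma$ this integrand is strictly negative by construction. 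Hence $\Gamma$ is $\d t\otimes\d\P\otimes\d u$-null, giving $h(u,t,W^u_{.\wedge t},U^u)\cdot(a_0-\alpha_t^u)\geq0$, $\d t\otimes\d\P$-a.e. and for a.e.\ $u$.

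It then remains to pass from a fixed $a_0$ to all $a\in A$ and to invoke convexity. Taking a countable dense subset $D\subset A$, the previous inequality holds simultaneously for every $a_0\in D$ outside a single null set; since $a\mapsto\partial_a H(u,X_t^u,\P_{X_t^.},Y_t^u,Z_t^u,\alpha_t^u)\cdot(a-\alpha_t^u)$ is affine, hence continuous, density of $D$ in $A$ upgrades this to all $a\in A$ a.e. Finally, the assumed convexity of $a\mapsto H(u,x,\mu,y,z,a)$ gives, for every $a\in A$,
\begin{align*}
H(u,X_t^u,\P_{X_t^.},Y_t^u,Z_t^u,a) &\;\geq\; H(u,X_t^u,\P_{X_t^.},Y_t^u,Z_t^u,\alpha_t^u) \\
&\quad + \partial_a H(u,X_t^u,\P_{X_t^.},Y_t^u,Z_t^u,\alpha_t^u)\cdot(a-\alpha_t^u) \;\geq\; H(u,X_t^u,\P_{X_t^.},Y_t^u,Z_t^u,\alpha_t^u),
\end{align*}
which is the claim. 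I expect the main obstacle to be the localization step: one must check carefully that the perturbed control $\beta$ genuinely lies in $\Ac$, in particular that it retains the joint label-measurability and feedback structure demanded of admissible controls, and this is precisely where the measurable representations of $X$, $Y$ and $Z$ are indispensable.
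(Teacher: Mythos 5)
Your proposal is correct and follows essentially the same route as the paper: optimality plus Corollary \ref{eq : corollary} gives the integrated variational inequality, an indicator-based spike perturbation (which stays admissible because it is built from the Borel feedback representations of $X$, $Y$, $Z$, $\alpha$) localizes it to a pointwise statement, and convexity of $a\mapsto H$ concludes. The only cosmetic difference is the order of operations — you localize the first-order condition $\partial_a H\cdot(a-\alpha_t^u)\ge 0$ and then apply convexity pointwise, whereas the paper first converts the gradient inequality into an $H$-difference inequality via convexity and then localizes on the set where $H(\cdot,a)<H(\cdot,\alpha_t^u)$ — and your explicit countable-density step to pass from a fixed $a_0$ to all $a\in A$ is a welcome addition that the paper leaves implicit.
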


\begin{proof}
  Since $\alpha$ is optimal, we have from Corollary \ref{eq : corollary} :
    \begin{equation}\label{ineq-conv}
        \int_{I} \mathbb{E} \Big[\int_{0}^{T} \partial_{a} H(u,X_t^u, \P_{X_t^.}, Y_t^u Z_t^u,\alpha_t^u) \cdot  (\beta_t^u - \alpha_t^u) \d t \Big] \d u \geq 0
    \end{equation}
   for any $\beta\in\Ac$. By the convexity  of the Hamiltonian $H$ with respect to $a \in A$, we have 
    \begin{equation}\label{eq : Diff Hamiltonien}
        \int_{I} \mathbb{E}\left[\int_{0}^{T} \big(H(u,X_t^u, \P_{X_t^.},Y_t^u,Z_t^u,\beta_t^u) - H(u,X_t^u, \P_{X_t^.},Y_t^u,Z_t^u,\alpha_t^u)\big) \d t \right] \d u \geq 0,
    \end{equation} for all $\beta \in \Ac$.
For almost every $u \in I$  and for any $a \in A$, we define the   $\F^u$-progressively measurable set  $C^u \subset [0,T] \times \Omega$ (i.e. such that $C^u \cap [0,t] \in \Bc([0,t]) \otimes \Fc_t^u$   for any $t  \in [0,T]$) as follows 
\begin{align}
   C^u~=~& \Bigg\lbrace (t,\omega) \in [0,T] \times \Omega : \\
    & \qquad H(u,X_t^u(\omega),\P_{X_t^.},Y_t^u(\omega),Z_t^u(\omega),a)  <  H(u,X_t^u(\omega),\P_{X_t^.},Y_t^u(\omega),Z_t^u(\omega),\alpha_t^u(\omega)) \Bigg\rbrace\;.
\end{align}
Then, defining the admissible control process $\beta$ by 
\begin{align}
    \beta^u_t(\omega) = \alpha \mathds{1}(t,\omega)_{ C^u} + \alpha^u_t(\omega) \mathds{1}(t,\omega)_{ (C^u)^c}\;,\quad u\in I\;, (t,\omega)\in [0,T]\times \Omega\;,
\end{align}
we get from \reff{ineq-conv}
\begin{align}
    \mathbb{E}\Big[\int_{0}^{T} \mathds{1}_{C^u} \big(H(u,X_t^u, \P_{X_t^.}, Y_t^u,Z_t^u,a) - H(u,X_t^u, \P_{X_t^.}, Y_t^u,Z_t^u,\alpha_t^u) \big) \d t \Big]  \geq 0, \notag  \quad \text{du-a.e.}
\end{align}
for almost every $u \in I$. We therefore get 
the expected result.
\end{proof}
We also provide another characterization of an optimal control $\alpha$ when the convexity assumption on $A$ and $H$ does not hold.

\begin{Proposition}
 Let Assumption \ref{assumptionbasic} hold and suppose that $A$ is an open subset (not necessarily convex). Let  $\alpha=(\alpha_t^u)_{u \in I, 0 \leq t \leq T}$
 be an optimal control, $X=(X_t^u)_{u \in I,0 \leq t \leq T}$ the associated state process and $(Y,Z)=(Y^u_t,Z^u_t)_{u \in I, 0 \leq t \leq T}$ the associated adjoint processes solving \eqref{eq : adjoint equations}. We have
 \begin{align}
     \partial_{a} H(u,X_t^u,\P_{X_t^.},Y_t^u,Z_t^u,\alpha_t^u)= 0\quad dt \otimes d\P \text{ a.e.}
 \end{align}
 for almost every $u \in I$.
 \end{Proposition}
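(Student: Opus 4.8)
The plan is to reuse the Gâteaux derivative formula of Corollary \ref{eq : corollary}, the only change being that openness of $A$ permits \emph{two-sided} perturbations of the optimal control, so the variational inequality underlying Theorem \ref{Theorem : Neccessaryr Condition} is upgraded to an equality. First I would set up a localization to handle the fact that $A$ need not be convex. For each $k\in\N$ introduce the compact set $A_k:=\{a\in A:\ |a|\le k,\ \mathrm{dist}(a,A^c)\ge 1/k\}$, so that $\bigcup_k A_k=A$. Fix a vector $v\in\R^m$ with $|v|\le 1$ and an $\F^u$-progressively measurable set $C^u\subset\{(t,\omega):\alpha^u_t(\omega)\in A_k\}$, chosen within the admissible functional form (built from the data $u,t,W^u_{\cdot\wedge t},U^u$), and define the bounded direction $\delta^u_t:=\mathds{1}_{C^u}(t,\omega)\,v\in L^2(I;\Hc^{2,m})$. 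Since $\mathrm{dist}(a,A^c)\ge 1/k$ on $A_k$, the perturbed controls $\alpha\pm\epsilon\delta$ remain $A$-valued, and hence admissible, for all $|\epsilon|<\epsilon_0:=1/(2k)$, uniformly on $C^u$.

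Next, because $\alpha$ is optimal over $\Ac$ and both $\alpha\pm\epsilon\delta\in\Ac$ for small $\epsilon>0$, the scalar map $\epsilon\mapsto J(\alpha+\epsilon\delta)$ attains a local minimum at $\epsilon=0$. Running the computation of Lemmas \ref{lemDerX=V}, \ref{eq : Lemma Gateaux J} and \ref{eq : Lemma int YV} and of Corollary \ref{eq : corollary} verbatim with the admissible direction $\delta$ in place of $\beta-\alpha$ (the linearized variation equation \eqref{eq : SDE V} and all the estimates are unchanged, being linear in the direction), two-sided differentiability forces the Gâteaux derivative to vanish:
\begin{equation*}
\int_I \E\Big[\int_0^T \partial_a H\big(u,X_t^u,\P_{X_t^.},Y_t^u,Z_t^u,\alpha_t^u\big)\cdot v\;\mathds{1}_{C^u}(t,\omega)\,\d t\Big]\d u=0.
\end{equation*}

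The conclusion then follows by an exhaustion argument as in the proof of Theorem \ref{Theorem : Neccessaryr Condition}. Taking $v=e_j$, the $j$-th basis vector, and letting $C^u$ range over all progressively measurable subsets of $\{\alpha^u_t\in A_k\}$ (including, via a measurable set of labels $C\subset I$, the localization in $u$), the vanishing of the above integral for every such $C^u$ gives $\partial_{a_j}H(u,X_t^u,\P_{X_t^.},Y_t^u,Z_t^u,\alpha_t^u)\,\mathds{1}_{\{\alpha^u_t\in A_k\}}=0$, $dt\otimes d\P$-a.e., for a.e. $u\in I$. Letting $k\to\infty$ and using $\bigcup_k\{\alpha^u_t\in A_k\}=\{\alpha^u_t\in A\}$, which is the whole space since $\alpha$ is $A$-valued, and ranging over $j=1,\dots,m$, yields $\partial_a H(u,X_t^u,\P_{X_t^.},Y_t^u,Z_t^u,\alpha_t^u)=0$, $dt\otimes d\P$-a.e., for a.e. $u\in I$.

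The main obstacle is the admissibility bookkeeping: one must verify that the localizing sets $C^u$ and the resulting perturbations $\alpha\pm\epsilon\delta$ retain the measurability-in-label and functional-representation structure defining $\Ac$, and that the distance-to-boundary localization furnishes a single $\epsilon_0>0$ valid on all of $C^u$. Once admissibility is secured, the rest is a direct transcription of the already-established first-order expansion, the only genuinely new ingredient compared with Theorem \ref{Theorem : Neccessaryr Condition} being that openness of $A$ replaces the one-sided inequality by an equality.
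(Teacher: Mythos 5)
Your proposal is correct and follows essentially the same route as the paper: localize to the set where $\mathrm{dist}(\alpha^u_t,A^c)$ is bounded below so that small perturbations in a fixed direction stay in $A$, apply the Gâteaux derivative formula of Corollary \ref{eq : corollary} together with optimality to kill the first-order term, and then remove the localization using that $A$ is open. The only cosmetic differences are that you obtain the equality from a two-sided perturbation while the paper uses one-sided perturbations and the arbitrariness of the unit direction $\pm\beta$, and that you exhaust $A$ by the compact sets $A_k$ rather than sending the distance threshold $\epsilon_0\to 0$ along a countable sequence.
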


\begin{proof}
Let $\epsilon_0 > 0$, $\beta \in \R^k$ with $|\beta| = 1$. We define the   $\F^u$-progressively measurable set  $C^u \subset [0,T] \times \Omega$ (i.e. such that $C^u \cap [0,t] \in \Bc([0,t]) \otimes \Fc_t^u$   for any $t  \in [0,T]$) as follows : 
\begin{align}
   C^u~=~& \Bigg\lbrace (t,\omega) \in [0,T] \times \Omega ~: 
    ~ 
    H(u,X_t^u(\omega),\P_{X_t^.},Y_t^u(\omega),Z_t^u(\omega),\alpha_t^u(\omega))\leq 0 \Bigg\rbrace\;.
\end{align}
and consider for any $C \in \Bc(I)$ such that $\beta_t^u(\omega) = \beta \mathds{1}_{\{(t,\omega) \in C^u\}}\mathds{1}_{ \lbrace \text{dist}(\alpha^u_t(\omega),A^c) > \epsilon_0 \rbrace }$ for every $u \in I$ and $t \in [0,T]$. By definition,  we have  $\alpha_t^u + \epsilon \beta_t^u \in A $ for  $u \in I$, $t \in [0,T]$ and $\epsilon \in [0,\epsilon_0[$. Then, from Corollary \ref{eq : corollary}, we have
\begin{align}
   \int_{I} \mathbb{E} \Big[\int_{0}^{T} \partial_{a} H(u,X_t^u,\P_{X_t^.},Y_t^u,Z_t^u,\alpha_t^u) \cdot \beta_t^u \d t \Big] \d u \geq 0
\end{align}
From the definition of $\beta_t^u$, we deduce similarly to the proof of Theorem \ref{Theorem : Neccessaryr Condition} that 
\begin{align}
    \mathds{1}_{\text{dist}(\alpha_t^u,A^c) > \epsilon_0} \partial_{a} H(u,X_t^u,\mathbb{P}_{X_t^.},Y_t^u,Z_t^u,\alpha_t^u) \cdot \beta  \geq 0 \quad dt \otimes d\P \quad \text{a.e}.
\end{align}
Taking the limit $\epsilon_0\rightarrow 0+$ (along a countable sequence) we get
\begin{align}
    \mathds{1}_{\text{dist}(\alpha_t^u,A^c) > 0}  \partial_{a} H(u,X_t^u,\P_{X_t^.},Y_t^u,Z_t^u,\alpha_t^u) = 0 \quad dt \otimes d\P \quad  \text{a.e}.
\end{align}
for almost every $u \in I$. 
Since $A$ is an open subset and $\beta$ can be chosen arbitrarily in the unit ball of $\R^k$, we get the result.
\end{proof}

\subsection{A sufficient condition}

We now give some conditions that ensure the optimality of a control $\alpha$.

\begin{Theorem}\label{Theorem : Sufficient condition}
   Let $\alpha=(\alpha^u)_{u\in I} \in \Ac$, $X=X^{\alpha}$ the corresponding controlled state process and $(Y,Z)$ the corresponding adjoint processes. Assume that 
   \begin{align}
       &(1) \quad \mathbb{R}^{d} \times L^2 \big(\mathcal{P}_2 (\mathbb{R}^{d})\big) \ni (x,\mu) \mapsto g(u,x,\mu)  \text{ is convex,} \notag \\
       &(2) \quad \mathbb{R}^{d} \times L^2 \big(\mathcal{P}_2(\mathbb{R}^d)\big) \times A \ni (x,\mu,a) \mapsto H(u,x,\mu,Y_t,Z_t,a) \text{ is convex $dt \otimes d\mathbb{P}$ a.e.,}
   \end{align}
    for almost every $u \in I$. Suppose that 
   \begin{align}
       H(u,X_t^u,\P_{X_t^.},Y_t^u,Z_t^u,\alpha_t^u) = \inf_{a \in A} H(u,X_t^u,\P_{X_t^.},Y_t^u,Z_t^u,a), \quad \text{ $dt \otimes d\mathbb{P}$ a.e.}
   \end{align}
   for almost every $u \in I$. Then, $\alpha$ is an optimal control: 
   $J(\alpha) = {\inf}_{\alpha' \in \mathcal{A}} J(\alpha')$.
\end{Theorem}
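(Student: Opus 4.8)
Throughout, write $\theta_t^u=(X_t^u,\P_{X_t^\cdot},\alpha_t^u)$ and $\Theta_t^u=(X_t^u,\P_{X_t^\cdot},Y_t^u,Z_t^u,\alpha_t^u)$ as in the proof of Lemma \ref{eq : Lemma int YV}. It suffices to show $J(\alpha)\le J(\beta)$ for an arbitrary competitor $\beta\in\Ac$ (renaming the $\alpha'$ of the statement). Denote its state $X^\beta=(X^{\beta,u})_{u}$, set $\theta_t^{\beta,u}=(X_t^{\beta,u},\P_{X_t^{\beta,\cdot}},\beta_t^u)$, $\Delta X_t^u:=X_t^{\beta,u}-X_t^u$ (so $\Delta X_0^u=0$, both states starting from $\xi^u$), and $H_t^{\beta,u}:=H(u,X_t^{\beta,u},\P_{X_t^{\beta,\cdot}},Y_t^u,Z_t^u,\beta_t^u)$. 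I first split $J(\beta)-J(\alpha)$ into its running part $\int_I\E[\int_0^T(f(u,\theta_t^{\beta,u})-f(u,\theta_t^u))\,\d t]\,\d u$ and its terminal part. Applying the convexity inequality \reff{eq : convex case 2} for $g$ pointwise in $\omega$, with $x=X_T^u$, $x'=X_T^{\beta,u}$ and the natural coupling $(\tilde X_T^{\tilde u},\tilde X_T^{\beta,\tilde u})$ in the measure term, then integrating in $u$, taking expectations, and performing the Fubini symmetrization $u\leftrightarrow\tilde u$ exactly as in the proof of Corollary \ref{eq : corollary}, the terminal part is bounded below by $\int_I\E[Y_T^u\cdot\Delta X_T^u]\,\d u$ thanks to the terminal condition for $Y_T^u$ in \reff{eq : adjoint equations}.

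Next I would evaluate $\int_I\E[Y_T^u\cdot\Delta X_T^u]\,\d u$ by Itô's formula, mirroring Lemma \ref{eq : Lemma int YV} but with $\Delta X^u$ (whose drift and diffusion are the increments $b(u,\theta_t^{\beta,u})-b(u,\theta_t^u)$ and $\sigma(u,\theta_t^{\beta,u})-\sigma(u,\theta_t^u)$) in place of the variation process $V^u$. Taking expectations removes the martingale term, and the same Fubini swap of the $\partial\dmu H$ contribution expresses this quantity as the $\d u\,\d t$-integral of $Y_t^u\cdot(b(u,\theta_t^{\beta,u})-b(u,\theta_t^u))+Z_t^u:(\sigma(u,\theta_t^{\beta,u})-\sigma(u,\theta_t^u))-\partial_x H(u,\Theta_t^u)\cdot\Delta X_t^u-\int_I\tilde\E[\partial\dmu H(u,\Theta_t^u)(\tilde u,\tilde X_t^{\tilde u})\cdot(\tilde X_t^{\beta,\tilde u}-\tilde X_t^{\tilde u})]\,\d\tilde u$. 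Using the Hamiltonian identity $f=H-b\cdot y-\sigma:z$ from \reff{eq : Hamiltonien} (with $(Y_t^u,Z_t^u)$ frozen) to rewrite $f(u,\theta_t^{\beta,u})-f(u,\theta_t^u)$, the terms $Y_t^u\cdot(b(u,\theta_t^{\beta,u})-b(u,\theta_t^u))$ and $Z_t^u:(\sigma(u,\theta_t^{\beta,u})-\sigma(u,\theta_t^u))$ cancel against their Itô counterparts, leaving
$$J(\beta)-J(\alpha)\ge\int_I\E\Big[\int_0^T\Big(H_t^{\beta,u}-H(u,\Theta_t^u)-\partial_x H(u,\Theta_t^u)\cdot\Delta X_t^u-\int_I\tilde\E\big[\partial\dmu H(u,\Theta_t^u)(\tilde u,\tilde X_t^{\tilde u})\cdot(\tilde X_t^{\beta,\tilde u}-\tilde X_t^{\tilde u})\big]\,\d\tilde u\Big)\,\d t\Big]\,\d u.$$

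Finally I would invoke hypothesis (2): applying the convexity inequality \reff{eq : convexity function} to $a\mapsto H(u,\cdot,\cdot,Y_t^u,Z_t^u,\cdot)$ pointwise in $(t,\omega)$, with the coupling $(X_t^u,X_t^{\beta,u})$, shows that the integrand above dominates $\partial_a H(u,\Theta_t^u)\cdot(\beta_t^u-\alpha_t^u)$, whence $J(\beta)-J(\alpha)\ge\int_I\E[\int_0^T\partial_a H(u,\Theta_t^u)\cdot(\beta_t^u-\alpha_t^u)\,\d t]\,\d u$. Since $\alpha_t^u$ minimizes the convex map $a\mapsto H(u,X_t^u,\P_{X_t^\cdot},Y_t^u,Z_t^u,a)$ over the convex set $A$, the first-order optimality condition yields $\partial_a H(u,\Theta_t^u)\cdot(a-\alpha_t^u)\ge0$ for every $a\in A$; taking $a=\beta_t^u\in A$ makes the integrand nonnegative $\d t\otimes\d\P$-a.e.\ for a.e.\ $u$, so $J(\beta)\ge J(\alpha)$ and $\alpha$ is optimal.

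The main obstacle is not the algebra but the bookkeeping of the two Fubini symmetrizations (for $\partial\dmu g$ and $\partial\dmu H$) together with the underlying integrability: one must verify, under Assumption \ref{assumptionbasic} and the integrability built into Definition \ref{def : adjoint processes}, that the terms $Y^u\cdot\Delta X^u$, $Z^u:(\sigma(\theta^{\beta})-\sigma(\theta))$ and the flat-derivative integrands all lie in $L^1(\d u\,\d t\,\d\P)$, so that Itô's formula, the vanishing of the stochastic integral, and the interchanges of integration are legitimate. A secondary delicate point is that \reff{eq : convex case 2} and \reff{eq : convexity function} are applied with the \emph{specific} coupling furnished by the common-noise representation of $X^u$ and $X^{\beta,u}$ on $(\Omega,\Fc,\P)$ (both driven by $(W^u,U^u)$), and that the pointwise minimization hypothesis is upgraded to the variational inequality $\partial_a H\cdot(a-\alpha)\ge0$ using convexity of $H$ in $a$ together with convexity of $A$.
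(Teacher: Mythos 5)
Your proposal is correct and follows essentially the same route as the paper's proof: convexity of $g$ plus the terminal condition and Fubini symmetrization to bound the terminal cost difference by $\int_I\E[Y_T^u\cdot\Delta X_T^u]\,\d u$, Itô's formula applied to $Y^u\cdot\Delta X^u$ combined with the identity $f=H-b\cdot y-\sigma:z$ to cancel the drift/diffusion increments, convexity of $H$ in $(x,\mu,a)$ to reduce to the $\partial_a H\cdot(\beta-\alpha)$ term, and the first-order condition at the minimizer over the convex set $A$ to conclude. The only difference is cosmetic (you bound $J(\beta)-J(\alpha)$ from below rather than $J(\alpha)-J(\hat\alpha)$ from above), and your remarks on the integrability needed to justify the Fubini swaps and the vanishing of the stochastic integrals are points the paper leaves implicit.
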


\begin{proof}
 Let $\hat{\alpha}=(\hat{\alpha}^u)_{u \in I} \in \mathcal{A}$. As previously, we introduce the notation $\hat{\theta}_t^{u}= (\hat{X}_t^{u},\P_{\hat{X}_t^.},\hat{\alpha}_t^u)$. We also write  
 $\hat{\Theta}_t^u= (\hat{X}_t^u,,\P_{\hat{X}_t^.},Y_t^u,Z_t^u,\hat{\alpha}_t^u)$. We then have 
    \begin{align}
        J(\alpha) - J(\hat{\alpha}) &= \int_{I} \mathbb{E}\Big[g(u,X^u_T,\P_{X_T^.}) - g(u,\hat{X}_T^{u},\P_{\hat{X}_T^.}) \Big] \d u + \int_{I} \mathbb{E} \Big[\int_{0}^{T} \big(f(u,\theta_t^u) - f(u,\hat{\theta}_t^u)\big) \d t \Big] \d u \notag \\
        &= \int_{I} \mathbb{E}\Big[g(u,X^u_T,\P_{X_T^.}) - g(u,\hat{X}_T^{u},\P_{\hat{X}_T^.}) \Big] \d u +  \int_{I} \mathbb{E} \Big[\int_{0}^{T} \big(H(u,\Theta_t^u) - H(u,\hat{\Theta}_t^u)\big) \d t \Big] \d u \notag \\
        &\hspace{0.4 cm}- \int_{I} \mathbb{E}\Big[\int_{0}^{T}\big( b(u,\theta_t^u) - b(u,\hat{\theta}_t^u) \big) \cdot Y_t^u + \big(\sigma(u,\theta_t^u) - \sigma(u,\hat{\theta}_t^u)\big) :Z_t^u \d t \Big] \d u. \notag 
    \end{align}
Recalling the convex hypothesis (1) on $g$ in the sense \eqref{eq : convex case 2}, 
we have  
\begin{align}\label{eq : convexity inequality on g}
    g(u,x,\mu) - g(u,\hat{x},\hat{\mu}) \leq (x-\hat{x}) \cdot  \partial_x g(u,x,\mu) + \int_{I} \mathbb{\tilde{E}} [\partial \frac{\delta }{\delta m}g(u,x,\mu)(\tilde{u},\tilde{X}^{\tilde{u}}) \cdot (\tilde{X}^{\tilde{u}} -\tilde{\hat{X}}^{\tilde{u}})] \d \tilde{u}\;.
\end{align}
Therefore, by taking expectation and using  Fubini's theorem, we get 
\begin{align}\label{eq : Convexity Inequality}
    \mathbb{E}\big[g(u,X_T^u,\P_{X_T^.}) - g(u,\hat{X}_T^{u},\P_{\hat{X}_T^.})\big] ~\leq ~& \mathbb{E}\big[\partial_x g(u,X_T^u,\P_{X_T^.}) \cdot  (X_T^u - \hat{X}_T^u) \big] \notag \\
    &+ \mathbb{E} \bigg[\int_{I} \mathbb{\tilde{E}} \Big[\partial \frac{\delta }{\delta m}g(u,X_T^u,\P_{X_T^.})(\tilde{u},\tilde{X}_T^{\tilde{u}}) \cdot (\tilde{X}_T^{\tilde{u}} -\tilde{\hat{X}}_T^{\tilde{u}}) \Big] \d \tilde{u}\bigg] \notag \\
    ~\leq~ &\mathbb{E} [Y_T^u \cdot (X_T^u - \hat{X}_T^u)] \;.
\end{align}
By Itô's formula, we have  
\begin{align}
    \mathbb{E}[(X_T^u - \hat{X}_T^u) \cdot Y_T^u]  
    ~=~& \mathbb{E} \Big[\int_{0}^{T} (X_t^u-\hat{X}_t^u) \cdot \d Y_t^u + \int_{0}^{T} Y_t^u \cdot \d [X_t^u - \hat{X}_t^u] + \int_{0}^{T} \big(\sigma(u,\theta_t^u) - \sigma(u,\hat{\theta}_t^u)\big) :Z_t^u \d t \Big] \notag \\
    ~=~&\mathbb{E}[M_T^u]- \mathbb{E} \bigg[\int_{0}^{T} \Bigg((X_t^u - \hat{X}_t^u) \cdot  \partial_x H(u,\Theta_t^u) - \int_{I} \mathbb{\tilde{E}} \Big[\partial \frac{\delta }{\delta m}H(\tilde{u},\tilde{\Theta}_t^{\tilde{u}})(u,X_{t}^{u}) \cdot (X_t^{u} - \hat{X}_t^{u}) \Big] \d \tilde{u}  \notag \\
    &\qquad \qquad +   \big(b(u,\theta_t^u) - b(u,\hat{\theta}_t^u)\big) \cdot  Y_t^u + \big(\sigma(u,\theta_t^u) - \sigma(u,\hat{\theta}_t^u)\big) :Z_t^u  \Bigg)\d t\bigg],
\end{align}
where $M^u$ denotes the square integrable martingale defined by 
\begin{align}
M_s^u = \int_{0}^{s} Y_t^u \cdot  \left[\sigma(u,\theta_t^u) - \sigma(u,\hat{\theta}_t^u) \right] \d W_t^u + \int_{0}^{s} (X_t^u - \hat{X}_t^u) \cdot Z_t^u \d W_t^u\;,\quad s\in[0,T].    
\end{align}
Therefore, using the convexity assumption (2) on $H$ we get 
\begin{align}
    J(\alpha) - J(\hat{\alpha}) &\leq   \int_{I} \mathbb{E} \Big[\int_{0}^{T} \big(H(u,\Theta_t^u) - H(u,\hat{\Theta}_t^u)\big) \d t \Big] \d u \notag \\
    &- \int_{I} \mathbb{E} \bigg[\int_{0}^{T} \Big(\partial_x H(u,\Theta_t^u) \cdot  (X_t^u -\hat{X}_t^u) +\int_{I}  \mathbb{\tilde{E}} \Big[\partial \frac{\delta }{\delta m}H(\tilde{u},\tilde{\Theta}_t^{\tilde{u}})(u,X_{t}^{u}) \cdot (X_t^{u} - \hat{X}_t^{u}) \d \tilde{u} \Big]\Big) \d t \bigg] \d u \notag \\
    &\leq \int_{I} \mathbb{E} \Big[\int_{0}^{T} \partial_{a} H(u,\Theta_t^u) \cdot  (\alpha_t^u - \hat{\alpha}_t^u) \d t \Big] \d u\;.
\end{align}
From the convexity assumption on $H$ and since $\alpha$ is a minimum point of $H$, we have 
\begin{align}
\partial_{a} H(u,\Theta_t^u) \cdot (\alpha_t^u - a) \leq  0 \quad  dt \otimes d\mathbb{P}-a.e.    
\end{align}
 for almost every $u \in I$ and every $a \in A$,
which ends the proof. 
\end{proof}

\section{Solvability of the Pontryagin collection of FBSDE}

In this section, we will discuss the application of the stochastic maximum principle to the solution of the mean field control of non exchangeable SDE. As usual, the strategy is to identify  a minimizer of the Hamiltonian and use it in the forward dynamics and in the adjoint equations. Therefore, we get a collection of FBSDE due to the non exchangeable mean-field system. We establish existence and uniqueness of the system   by applying a continuation method for the collection of FBSDE as in \cite{carmona_probabilistic_2018-1}.

\subsection{Assumptions}

In this section, we now introduce assumptions that ensure the existence and uniqueness of the collection of FBSDE. 
\begin{Assumption}\label{Assumption : Existence and Uniqueness} There exists two constants $L \geq 0$ and $\lambda > 0$ such that  
\noindent 
 
\begin{itemize}
\item[(i)]  The drift $b$ and the volatility $\sigma$ are linear in $\mu$, $x$ and $\alpha$ such that 
    \begin{align}
        b(u,x,\mu,a) &= b_0(u) +   \int_{I} b_1(u,v) \bar{\mu}^v \d v + b_2(u) x + b_3(u) a, \notag \\
        \sigma(u,x,\mu,a)&=\sigma_0(u) +\int_{I} \sigma_1(u,v) \bar{\mu}^v \d v + \sigma_2(u)x + \sigma_3(u) a ,
    \end{align}
for some bounded measurable deterministic functions $b_0,b_1,b_2,b_3$ with values in $\R^d , \R^{d \times d},\R^{d \times d}, \R^{d \times m}$ and $\sigma_0,\sigma_1,\sigma_2,\sigma_3$ with values in $\R^{d \times n}, \R^{(d \times n) \times d},\R^{(d \times n) \times d}$ and $\R^{(d \times n) \times m}$ and where we used the notation $\bar{\mu}^v = \int_{\R^d} x \mu^v(\d x)$ for the mean of the measure $\mu^v$.

\item [(ii)] There exists a constant $L$ such that
%
%
%
\begin{align}
    \big| \partial_x f(u,x',\mu',a') - \partial_x f(u,x,\mu,a) \big| &\leq L \bigg(|x-x'| + |a - a'|+ \bd(\mu,\mu') \bigg),  \notag \\
    \big| \partial_x g(u,x',\mu') - \partial_x g(u,x,\mu) \big| &\leq L \bigg(|x-x'| + \bd(\mu,\mu') \bigg),
\end{align}
and
\begin{align}
    \big| \partial_a f(u,x',\mu',a') - \partial_a f(u,x,\mu,a) \big| &\leq L \bigg(|x-x'| + |a - a'|+ \bd(\mu,\mu') \bigg),  \notag \\
    \big| \partial_a g(u,x',\mu') - \partial_a g(u,x,\mu) \big| &\leq L \bigg(|x-x'| + \bd(\mu,\mu') \bigg),
\end{align}
for all $u \in I$,  $x,x' \in \R^d$, any $a,a' \in A$ and any $\mu=(\mu^u)_{u}, \mu'=(\mu'^{u})_{u} \in L^2(\Pc_2(\R^d))$.

\item [(iii)] There exists a constant $L$ such that 
\begin{align}
     & \int_{I} \mathbb{E} \Big[ \big| \partial \frac{\delta }{\delta m}f(u,x',\mu',a')(\tilde{u},X'^{\tilde{u}}) - \partial \frac{\delta }{\delta m}f(u,x,\mu,a)(\tilde{u},X^{\tilde{u}}) \big|^2 \Big] \d \tilde{u}  \\
    \leq ~~& L \left(|x'-x|^2  + |a' - a|^2 + \int_{I} \mathbb{E} \Big[\big|X'^u - X^u \big|^2 \Big] \d u\right),
\end{align}
and
\begin{align}
      & \int_{I} \mathbb{E} \Big[ \big| \partial \frac{\delta}{\delta m} g(u,x',\mu')(\tilde{u},X'^{\tilde{u}}) - \partial \frac{\delta }{\delta m}g(u,x,\mu)(\tilde{u},X^{\tilde{u}}) \big|^2 \Big] \d \tilde{u} \\
    \leq~~ & L \left( |x'-x|^2  + \int_{I} \mathbb{E} \Big[ \big|X'^u - X^u \big|^2 \Big] \d u\right),
\end{align}
for all $u \in I$, any $x,x' \in \R^d$ ,  $a,a' \in A$,  $\mu=(\mu^u)_{u}, \mu'=(\mu'^{u})_{u} \in L^2(\Pc_2(\R^d))$, and $X^u$ and $X'^{u}$ $\R^d$-valued random variables such that $X^u\sim \mu^u$ and $X'^{u} \sim \mu'^{u}$ and $u\mapsto \P_{(X^u,X'^u)}$ is measurable.

\item [(iv)]  The functions $f$ and $g$ satisfy the following convexity property 
\begin{align}
        f(u,x',\mu',a') - f(u,x,\mu,a) - \partial_x f(u,x,\mu,a) \cdot (x'-x) -
         \partial_{a} f(u,x,\mu,a) \cdot (a'-a) \notag & \\ 
         - \int_{I} \mathbb{E} \Big[\partial  \frac{\delta }{\delta m}f(u,x,\mu,a)(\tilde{u},X^{\tilde{u}}) \cdot (X'^{\tilde{u}}-X^{\tilde{u}}) \Big] \d \tilde{u} 
         & \geq \lambda |a' - a|^2,
    \end{align}
    and
       \begin{align}
       g(u,x',\mu') - g(u,x,\mu) - \partial_x g(u,x,\mu) \cdot (x'-x) -  \int_{I} \mathbb{E} \Big[\partial \frac{\delta }{\delta m}g(u,x,\mu)(\tilde{u},X^{\tilde{u}}) \cdot (X'^{\tilde{u}}-X^{\tilde{u}}) \Big] \d \tilde{u} \geq 0
   \end{align}
    for all $u \in I$, $(x,\mu,a) \in \R^d \times L^2(\Pc_2(\R^d)) \times A$ and $(x',\mu',a') \in \R^d \times L^2(\Pc_2(\R^d))  \times A$,  $X^u$ and $X'^{u}$ $\R^d$-valued random variables such that $X^u\sim \mu^u$ and $X'^{u} \sim \mu'^{u}$ and $u\mapsto \P_{(X^u,X'^u)}$ is measurable.

\end{itemize}

\end{Assumption}

\begin{Remark}\label{rmk : Convexity of H}
    Note that the L-convexity property of $f$ holds for the Hamiltonian with this specific form. We indeed have
    \begin{align}
     H(u,x',\mu',y,z,a') &- H(u,x,\mu,y,z,a) - \partial_x H(u,x,\mu,y,z,a) \cdot (x'-x) -
         \partial_{a} H(u,x,\mu,y,z,a) \cdot (a'-a) \notag \\
         &+ \int_{I} \mathbb{E}[\partial \frac{\delta }{\delta m}H(u,x,\mu,y,z,a)(\tilde{u},X^{\tilde{u}}) \cdot (X'^{\tilde{u}}-X^{\tilde{u}})] \d \tilde{u} 
         \geq \lambda |a' - a|^2
\end{align}
for every $u \in I$, $x,x' \in \R^d$, $\mu,\mu' \in L^2\big(\Pc_2(\R^d)\big)$, $y \in \R^d$, $z \in \R^{d \times n}$ and $a,a' \in A$.
\end{Remark}

\subsection{The Hamiltonian and the adjoint equations}

According to the linear form of $b$ and $\sigma$,  the Hamiltonian $H$ takes the following form 

\begin{align}
    H(u,x,\mu,y,z,a) &= \big[b_0(u) + \int_{I} b_1(u,v) \bar{\mu}^v \d v + b_2(u) x + b_3(u) a \big] \cdot  y \notag \\
    &\quad + \big[\sigma_0(u) + \int_{I} \sigma_1(u,v) \bar{\mu}^v \d v + \sigma_2(u)x + \sigma_3(u) a \big]:z \notag \\
    &\quad + f(u,x,\mu,a)\;.
\end{align}
Given $(u,x,\mu,y,z) \in I \times \R^d \times L^2(\Pc_2(\R^d)) \times \R^d \times \R^{d \times n}$, the function $A \ni a \mapsto H(u,x,\mu,y,z,a)$ is strictly convex because of the $L$-convexity of $H$ provided by Remark \ref{rmk : Convexity of H}. Hence,  there exists a unique minimizer $\hat{\mra}(u,x,\mu,y,z)$ 
\begin{align}\label{eq : optcontrol minimzor}
    \hat{\mra}(u,x,\mu,y,z) := \text{arg}\min_{a \in A} H(u,x,\mu,y,z,a).
\end{align}
The following lemma states some properties on the optimizer $\hat{\mra}$.

\begin{Lemma}\label{Lemma : Optimizer information}
   The function $I \times \R^d \times L^2\big(\Pc_2(\R^d)\big) \times \R^d \times \R^{d \times n} \ni (u,x,\mu,y,z) \mapsto \hat{\mra}(u,x,\mu,y,z) \in A$ is Borel mesurable and locally bounded, Moreover, for an arbitrary point  $\beta_0$ in $A$,  we have
   \begin{align}
       | \hat{\mra}(u,x,\mu,y,z)| \leq \frac{1}{\lambda} \big( |\partial_{a} f(u,x,\mu,\beta_0) | + |b_3(u)| |y| +  |\sigma_3(u)| |z| \big) + | \beta_0|
   \end{align}
  for any  $(u,x,\mu,y,z) \in I \times \R^d \times L^2 \big(\Pc_2(\R^d)\big) \times \R^d \times \R^{d \times n}$.
\end{Lemma}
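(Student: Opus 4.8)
The plan is to derive the explicit estimate first, since it immediately delivers the local boundedness as well, and to treat the Borel measurability of $\hat{\mra}$ afterwards. Fix $(u,x,\mu,y,z)$ and write $\hat a := \hat{\mra}(u,x,\mu,y,z)$. Since $b$ and $\sigma$ are affine in $a$ with coefficient matrices $b_3(u)$ and $\sigma_3(u)$, and $f$ is differentiable in $a$, the map $a\mapsto H(u,x,\mu,y,z,a)$ is differentiable and at $a=\beta_0$
\[
 |\partial_a H(u,x,\mu,y,z,\beta_0)| \;\le\; |\partial_a f(u,x,\mu,\beta_0)| + |b_3(u)|\,|y| + |\sigma_3(u)|\,|z|.
\]
The crux is the $L$-convexity of $H$ from Remark \ref{rmk : Convexity of H}: specialising it to $x'=x$, $\mu'=\mu$ (choosing the same random variables, so the measure term vanishes), $a=\beta_0$, $a'=\hat a$ gives the strong convexity in $a$,
\[
 \lambda|\hat a-\beta_0|^2 \;\le\; H(u,x,\mu,y,z,\hat a) - H(u,x,\mu,y,z,\beta_0) - \partial_a H(u,x,\mu,y,z,\beta_0)\cdot(\hat a-\beta_0).
\]
Because $\hat a$ minimises $H(u,x,\mu,y,z,\cdot)$ over $A$ and $\beta_0\in A$, the difference of the two $H$-values is nonpositive, whence $\lambda|\hat a-\beta_0|^2 \le -\partial_a H(u,x,\mu,y,z,\beta_0)\cdot(\hat a-\beta_0) \le |\partial_a H(u,x,\mu,y,z,\beta_0)|\,|\hat a-\beta_0|$ by Cauchy--Schwarz. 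Dividing by $|\hat a-\beta_0|$ (the case $\hat a=\beta_0$ being trivial), inserting the bound on $|\partial_a H(u,x,\mu,y,z,\beta_0)|$ and using $|\hat a|\le|\hat a-\beta_0|+|\beta_0|$ yields the stated inequality. Local boundedness is then immediate, since $b_3,\sigma_3$ are bounded by Assumption \ref{Assumption : Existence and Uniqueness}(i) and $(x,\mu)\mapsto\partial_a f(u,x,\mu,\beta_0)$ is locally bounded uniformly in $u$ by Assumption \ref{assumptionbasic}(iii).

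For the measurability I would use that $H$ is a Carath\'eodory map on $I\times\big(\R^d\times L^2(\Pc_2(\R^d))\times\R^d\times\R^{d\times n}\big)\times A$: Borel in $u$, and for each fixed $u$ jointly continuous in $(x,\mu,y,z,a)$ (the affine parts are continuous, and the derivative data of $f$ are continuous in $(x,\mu,a)$ by Assumption \ref{assumptionbasic}). Since the minimiser is unique by strict convexity, it suffices to exhibit a Borel selection of the argmin and to note it must coincide with $\hat{\mra}$. The a priori estimate localises the argmin inside the measurable, compact-valued correspondence $e=(u,x,\mu,y,z)\mapsto \overline A\cap\overline B(\beta_0,R(e))$, with $R(e)$ the (measurable) right-hand side of the bound; the measurable maximum theorem applied to $H$ over this correspondence then produces a Borel selection, which by uniqueness equals $\hat{\mra}$. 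Equivalently, one may show directly that $e\mapsto\hat{\mra}(e)$ is continuous in $(x,\mu,y,z)$ for each $u$: adding the first-order conditions $\partial_a H(e,\hat a(e))\cdot(a-\hat a(e))\ge0$ at two points $e,e'$ and using the strong monotonicity of $a\mapsto\partial_a H$ gives $2\lambda|\hat a(e')-\hat a(e)|\le|\partial_a H(e',\hat a(e))-\partial_a H(e,\hat a(e))|\to0$ as $e'\to e$; combined with measurability in $u$, the Carath\'eodory property then yields joint Borel measurability.

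I expect the measurability to be the only genuine difficulty. The estimate is a short convexity computation, whereas the selection step requires care because the parameter space carries the infinite-dimensional factor $L^2(\Pc_2(\R^d))$: one must use that this space is a separable complete metric space (as recorded in Section 2), so that the Carath\'eodory and measurable-selection machinery applies, and one must accommodate the fact that $A$ need not be closed when passing to the compact-valued correspondence, which is precisely why the a priori localisation to a ball, together with uniqueness and continuity of the minimiser, is needed.
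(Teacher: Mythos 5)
Your derivation of the quantitative bound is exactly the paper's argument: apply the $\lambda$-convexity of $H$ in $a$ (Remark \ref{rmk : Convexity of H}) at the pair $(\beta_0,\hat a)$, use $H(\hat a)\le H(\beta_0)$ and Cauchy--Schwarz on $\partial_a H(\cdot,\beta_0)=\partial_a f(\cdot,\beta_0)+b_3(u)^{\top}y+\sigma_3(u)^{\top}z$, then the triangle inequality; local boundedness follows the same way in both texts. The only point where you diverge is the Borel measurability: the paper disposes of it in one line by observing that $\hat{\mra}$ is the pointwise limit of a (projected) gradient-descent iteration, each iterate being a measurable function of $(u,x,\mu,y,z)$, whereas you invoke the measurable maximum theorem on the compact-valued correspondence cut out by the a priori bound, supplemented by a continuity-in-$(x,\mu,y,z)$ argument via strong monotonicity of $\partial_a H$. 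Both routes are valid; yours is more explicit about the selection issues (separability of $L^2(\Pc_2(\R^d))$, possible non-closedness of $A$), while the paper's is shorter and avoids selection theorems altogether since strong convexity already guarantees convergence of the explicit measurable iterates to the unique minimiser.
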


\begin{proof}
The Borel measurability of $\hat{\mra}$ follows from the measurability of $H$ and from the strict convexity of $H$ as $\hat{\mra}$ can be approximated through a gradient descent algorithm. For the upper bound, we  can write by noting $\beta_0 \in A$ an arbitrary point and using Remark \ref{rmk : Convexity of H} 
    \begin{align}
        H(u,x,\mu,y,z,\beta_0) \geq & H(u,x,\mu,y,z,\hat{\mra}(u,x,\mu,y,z)) \notag \\
        \geq & H(u,x,\mu,y,z,\beta_0) + \big(\hat{\mra}(u,x,\mu,y,z) - \beta_0 \big) \cdot \partial_{a} H(u,x,\mu,y,z,\beta_0)\\
         & + \lambda | \hat{\mra}(u,x,\mu,y,z) - \beta_0 |^2
    \end{align}
    Since $\partial_{a} H(u,x,\mu,y,z,\beta_0)  = \partial_{a} f(u,x,\mu,y,z,\beta_0) + b_3(u)y + \sigma_3(u)z$, we get from Cauchy-Schwarz inequality 
    \begin{align}
        | \hat{\mra}(u,x,\mu,y,z) - \beta_0 | \leq \frac{1}{\lambda} \big( | \partial_{a} f(u,x,\mu,\beta_0)|+ |b_3(u)| |y| + |\sigma_3(u)||z|\big) ,
    \end{align}
and then 
\begin{align}
    | \hat{\mra}(u,x,\mu,y,z)| \leq \frac{1}{\lambda} \big( | \partial_{a} f(u,x,\mu,\beta_0)| + |b_3(u)| |y| + |\sigma_3(u)||z|\big)  + |\beta_0|.
\end{align}
The local boundedness now follows from the local  boundedness of $\partial_{a} f$ and from the boudedness of $b_3$ and $\sigma_3$.
\end{proof}

Given the necessary and sufficient conditions proven in the previous section, we aim at using the control $\hat{\alpha}=(\hat{\alpha}^u)_{u \in I}$ defined by $\hat{\alpha}^u_t: = \hat{\mra}(u,X_t^u,\mathbb{P}_{X_t^.},Y_t^u,Z_t^u)$ where $\hat{\mra}$ is the minimizer function of the Hamiltonian in \eqref{eq : optcontrol minimzor} and $(X,Y,Z) = (X_t^u,Y_t^u,Z_t^u)_{u \in I , 0 \leq t \leq T}$ is a solution of the following collection of FBSDE

\begin{equation}\label{eq : FBSDE equations LQC}
        \left\{\begin{aligned} \d X_t^u &= \left[b_0(u) +   \int_{I} b_1(u,v) \mathbb{E}\big[X_t^v \big] \d v + b_2(u) X_t^u + b_3(u) \hat{\alpha}_t^u \right] \d t \\
        &\quad + \left[\sigma_0(u) + \int_{I} \sigma_1(u,v) \mathbb{E} \big[X_t^v \big] \d v + \sigma_2(u)X_t^u + \sigma_3(u) \hat{\alpha}_t^u \right] \d W_t^u ,\\
        \d Y_t^u &= - \Big[b_2(u)^{\top} Y_t^u + \sigma_2(u)^{\top}Z_t^u + \partial_x f(u,X_t^u,\mathbb{P}_{X_t^.},\hat{\alpha}_t^u) \Big] \d t   \\
        &\quad - \bigg[ \int_{I}  b_1(v,u)^{\top} \mathbb{E} \big[Y_t^v \big] \d v + \int_{I}  \sigma_1(v,u)^{\top} \mathbb{E} \big[Z_t^v \big] \d v + \int_{I} \mathbb{\tilde{E}} \Big[\partial \frac{\delta }{\delta m}f(\tilde{u},\tilde{X}_t^{\tilde{u}},\mathbb{P}_{X_t^.},\hat{\alpha}_t^u)(u,X_t^u) \Big] \d \tilde{u} \bigg] \d t  \\
        &\quad + Z_t^u \d W_t^u, \\
        X_0^u &= \xi^u,  \\
        Y_T^u &=  \partial_x g(u,X_T^u, \P_{X_T^.}) + \int_{I}\mathbb{\tilde{E}} \Big[\partial \frac{\delta }{\delta m}g (\tilde{u}, \tilde{X}_T^{\tilde{u}}, \P_{X_T^.})(u,X_T^u) \Big] \d \tilde{u}.
        \end{aligned}
        \right.
\end{equation}
We end this subsection by giving the precise definition of  a solution to the collection of FBSDE \eqref{eq : FBSDE equations LQC}.

\begin{Definition}\label{def : solution FBSDEs}
We will say that $(X,Y,Z) = (X_t^u,Y_t^u,Z_t^u)_{u \in I, 0 \leq t \leq T}$ valued in $\R^d \times \R^d \times \R^{d\times n}$ belongs to $\Sc$ if :
    \begin{enumerate}
        \item There exist measurable functions $\mrx$, $\mry$ and $\mrz$ defined on $I \times [0,T] \times \Cc^n_{[0,T]}  \times [0,1]$ such that
        \begin{align}
            X_t^u = \mrx(u,t,W^u_{. \wedge t},Z^u), \quad Y_t^u = \mry(u,t,W^u_{. \wedge t},U^u), \quad Z_t^u = \mrz(u,t,W^u_{. \wedge t},U^u).
        \end{align}
        \item Each process $X^u$ and $Y^u$ are  $\F^u$-adapted and continuous and $Z^u$ is $\F^u$-adapted and square integrable.
        \item The following norm is finite 
        \begin{align}
            \lVert (X,Y,Z) \rVert_{\Sc} : = \left(\int_{I} \mathbb{E}\left[\underset{t \in [0,T]}\sup \big|X_t^u \big|^2 + \underset{t \in [0,T]} \sup \big|Y_t^u \big|^2 + \int_{0}^{T} \big|Z_t^u \big|^2 \d t\right] \d u\right)^{\frac{1}{2}}\;.
        \end{align}
    \end{enumerate}
We say that $(X^u,Y^u,Z^u)_u \in \Sc$ is a solution to \eqref{eq : FBSDE equations LQC} if the equations in \eqref{eq : FBSDE equations LQC} are satisfied for almost every $u \in I$. Moreover, we say that the solution is unique if, whenever $(X^u,Y^u,Z^u)_u$, $(\tilde{X}^u, \tilde{Y}^u, \tilde{Z}^u)_{u} $, the processes $(X^u,Y^u,Z^u)$ and $(\tilde{X}^u,\tilde{Y}^u,\tilde{Z}^u)$ coïncide, up to a $\P$-null set, for almost every $u \in I$.
\end{Definition}

\begin{Remark}\label{rmk : Measurability of X,Y,Z}
    We note that  for $(X^u,Y^u,Z^u)_{u \in I},\; (\bar{X}^u,\bar{Y}^u,\bar{Z}^u)_{u \in I}\in\Sc$, the mapping 
    \begin{align}
        u \in I \mapsto \P_{\big((X^u,Y^u,Z^u),(\bar{X}^u,\bar{Y}^u,\bar{Z}^u)\big)},
    \end{align} is also measurable which makes the space $\Sc$ endowed with $\lVert. \rVert_{\Sc}$ a Banach space and allow for the construction of solutions via Picard iterations on this space. 
\end{Remark}

\subsection{Existence and uniqueness result}

\begin{Theorem}\label{theorem : existence unicity FBSDEs}
Suppose that  Assumptions \ref{assumptionbasic} and \ref{Assumption : Existence and Uniqueness} hold.  Let $\xi = (\xi^u)_{u \in I}$ be an admissible initial condition. Then there exists a unique solution to the collection of forward backward system \eqref{eq : FBSDE equations LQC} $(X,Y,Z) = (X^u,Y^u,Z^u)_u \in \Sc$ in the sense of Definition \ref{def : solution FBSDEs}.
\end{Theorem}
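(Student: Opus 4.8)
The plan is to follow the method of continuation in the coefficient for forward–backward systems, as developed for mean-field FBSDEs in \cite{carmona_probabilistic_2018-1}, adapted to the $u$-indexed collection coupled through the $L^2(I;\cdot)$ structure. First I would recast \eqref{eq : FBSDE equations LQC} in reduced form by inserting the Hamiltonian minimizer $\hat{\mra}=\hat{\mra}(u,x,\mu,y,z)$ of \eqref{eq : optcontrol minimzor}, and define
\[
B(u,x,\mu,y,z):=b(u,x,\mu,\hat{\mra}),\qquad \Sigma(u,x,\mu,y,z):=\sigma(u,x,\mu,\hat{\mra}),
\]
together with $F$ and $G$ given by $\partial_x H(u,x,\mu,y,z,\hat{\mra})$ and $\partial_x g(u,x,\mu)$ completed by the mean-field adjoint terms $\int_I\tilde\E[\partial\dmu H(\cdots)(u,\cdot)]\d\tilde u$ and $\int_I\tilde\E[\partial\dmu g(\cdots)(u,\cdot)]\d\tilde u$ of \eqref{eq : FBSDE equations LQC}. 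A preliminary step upgrades Lemma \ref{Lemma : Optimizer information} to Lipschitz continuity of $\hat{\mra}$ in $(x,\mu,y,z)$, uniformly in $u$: this follows from the strict $\lambda$-convexity of $a\mapsto H$ (Remark \ref{rmk : Convexity of H}), the Lipschitz bounds on $\partial_a f$ in Assumption \ref{Assumption : Existence and Uniqueness}(ii), and the boundedness of $b_3,\sigma_3$, by the standard estimate for minimizers of uniformly convex functions with Lipschitz gradient over the convex set $A$. Combined with Assumption \ref{Assumption : Existence and Uniqueness}(i)--(iii), this makes $B,\Sigma,F,G$ Lipschitz in $(x,\mu,y,z)$ uniformly in $u$, the $\mu$- and flat-derivative dependence being controlled in $\bd$ and in the $L^2(I;\cdot)$ norms via Fubini.

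The heart of the proof is the \emph{monotonicity estimate}. Given two inputs $\Theta=(X,Y,Z)$ and $\Theta'=(X',Y',Z')$ in $\Sc$ with the same initial condition $\xi$, write $\Delta X^u=X'^u-X^u$, and similarly $\Delta Y^u,\Delta Z^u,\Delta B^u,\Delta\Sigma^u,\Delta F^u$, and set $\Delta a^u$ for the difference of the corresponding minimizers. Applying It\^o's formula to $\Delta X^u\cdot\Delta Y^u$, integrating over $u\in I$ and taking expectations (the local martingales vanish as in Lemma \ref{eq : Lemma int YV}), the cross mean-field terms recombine under the Fubini exchange of $u$ and $\tilde u$, exactly as in the proofs of Lemma \ref{eq : Lemma int YV} and Theorem \ref{Theorem : Sufficient condition}, yielding
\[
\int_I\E[\Delta X_T^u\cdot\Delta Y_T^u]\,\d u=\int_I\E\Big[\int_0^T\big(\Delta Y^u\cdot\Delta B^u+\Delta Z^u:\Delta\Sigma^u-\Delta X^u\cdot\Delta F^u\big)\,\d t\Big]\d u.
\]
The terminal convexity of $g$ in Assumption \ref{Assumption : Existence and Uniqueness}(iv), read as monotonicity of the terminal gradient map, gives $\int_I\E[\Delta X_T^u\cdot\Delta Y_T^u]\,\d u\ge 0$, while the $\lambda$-convexity of $H$ from Remark \ref{rmk : Convexity of H}, together with the first-order optimality $\partial_a H(u,\cdot,\hat{\mra})\cdot(a-\hat{\mra})\ge 0$, bounds the right-hand integrand above by $-\lambda|\Delta a^u|^2$. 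Hence
\[
\lambda\int_I\E\Big[\int_0^T|\Delta a^u|^2\,\d t\Big]\d u\le 0,
\]
whose quantitative (source-term) version is the a priori estimate that drives the continuation step.

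With this in hand I would run the continuation argument. For $\tau\in[0,1]$ I interpolate between \eqref{eq : FBSDE equations LQC} at $\tau=1$ and a decoupled system at $\tau=0$ (a forward SDE plus a backward equation with driver $-Y$), keeping arbitrary square-integrable source terms in the driver and terminal data. The base case $\tau=0$ is solvable in $\Sc$ by Theorem \ref{THMexistUniqX} for the forward part and by standard BSDE theory for the backward part, the functional representations $Y_t^u=\mry(u,t,W^u_{\cdot\wedge t},U^u)$, $Z_t^u=\mrz(u,t,W^u_{\cdot\wedge t},U^u)$ being obtained as in Theorem \ref{THMexistUniqX}. Using the monotonicity a priori estimate and standard SDE/BSDE stability bounds, one shows that solvability at $\tau_0$ implies solvability on $[\tau_0,\tau_0+\delta]$ for some $\delta>0$ independent of $\tau_0$, via a fixed-point contraction; iterating reaches $\tau=1$, and uniqueness is the case of vanishing source terms in the same estimate. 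Throughout, one verifies that the iterates remain in $\Sc$: the Borel functional representations and the measurability of $u\mapsto\P_{((X^u,Y^u,Z^u),(\bar X^u,\bar Y^u,\bar Z^u))}$ are preserved under the Picard iterations, as recorded in Remark \ref{rmk : Measurability of X,Y,Z}.

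I expect two principal obstacles. The first is establishing the monotonicity estimate in the non-exchangeable setting: one must justify the Fubini exchanges on the double integrals $\int_I\int_I$ generated by the flat-derivative adjoint terms and check that they recombine with the correct sign, for which Assumption \ref{Assumption : Existence and Uniqueness}(iii)--(iv) is precisely tailored. The second, more delicate than in the classical case, is ensuring that the $\Sc$-structure — the representation of $(X^u,Y^u,Z^u)$ as Borel functions of $(u,t,W^u_{\cdot\wedge t},U^u)$ and the measurability in $u$ of the joint laws — is preserved at every stage of the continuation and contraction procedure, so that the mean-field coupling terms in \eqref{eq : FBSDE equations LQC} remain well defined at each iterate.
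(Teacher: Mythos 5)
Your proposal is correct and follows essentially the same route as the paper: the continuation method driven by the monotonicity a priori estimate obtained from Itô's formula on $\Delta X^u\cdot\Delta Y^u$, the convexity of $g$, and the $\lambda$-convexity of $H$ (Assumption \ref{Assumption : Existence and Uniqueness}(iv) via Remark \ref{rmk : Convexity of H}), with the base case being the decoupled input-driven system and with the same care devoted to preserving the Borel functional representations in $(u,t,W^u_{\cdot\wedge t},U^u)$ at each step. The only cosmetic differences are your choice of driver $-Y$ at $\tau=0$ (the paper uses a pure source-term driver) and your preliminary Lipschitz upgrade of $\hat{\mra}$, which the paper bypasses by estimating $\int_I\E\int_0^T|\alpha^u-\bar\alpha^u|^2$ directly from the strong convexity.
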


\noindent The proof will follow form similar arguments to those used in the classic mean-field control case, which can be found in Chapter 6 of \cite{carmona_probabilistic_2018}. For the sake of completeness, we  provide the detailed proof. We first need to introduce some additional notations.

As previously done, we use the notation $(\Theta_t^u)_{u \in I, 0 \leq t \leq T}$ for the collection of processes $(X_t^u,\P_{X_t^.},Y_t^u,Z_t^u,\alpha_t^u)_{u \in I,0 \leq t \leq T}$ valued in $\R^d \times L^2 (\Pc_2(\R^d) ) \times \R^d \times \R^{d \times n} \times A$.  We denote by $\mathfrak{S}$ the space of collection of processes   $\Theta = (\Theta_t^u)_{u \in I , 0 \leq t \leq T}$ such that $(X,Y,Z) \in \Sc$ and with  
\begin{align}
    \lVert \Theta \rVert_{\mathfrak{S}} := \left(\int_{I} \E \Big[ \underset{ t \in [0,T]}{\text{ sup }} \big|X_t^u \big|^2 + \underset{t \in [0,T]}{\text{ sup }} \big|Y_t^u \big|^2 + \int_{0}^{T} |Z_t^u|^2 + |\alpha_t^u|^2 \d t \Big] \d u\right)^{\frac{1}{2}} < + \infty.
\end{align}
We also denote by $(\theta_t^u)_{u \in I, 0 \leq t \leq T}$ the restriction of the extended processes $(\Theta_t^u)_{u \in I, t \in [0,T]} \in \mathfrak{S}$ to $(X_t^u,\P_{X_t^.},\alpha_t^u)$.

We now define the set of inputs $\I$ as the set of four-tuples $\Ic$  with $\Ic := \left((\Ic_t^{b,u},\Ic_t^{\sigma,u},\Ic_t^{f,u})_{0 \leq t \leq T},\Ic_T^{g,u}\right)_{u \in I}$ with $(\Ic_t^{b,u},\Ic_t^{\sigma,u},\Ic_t^{f,u},\Ic_T^{g,u})$ valued respectively in $\R^d \times \R^{d \times n} \times \R^d \times \R^d$  satisfying the following properties 

\begin{enumerate}
    \item [$\bullet$] there exist measurable functions $\Ik^{b}$, $\Ik^{\sigma}$, $\Ik^{f}$ defined on $ I \times [0,T] \times \Cc^n([0,T]) \times (0,1)$ and $\Ik^{g}$ defined on $I \times \Cc^n([0,T]) \times (0,1)$  such that
     \begin{align}
        \Ic_t^{b,u} = \Ik^{b}(u,t,W^u_{. \wedge t}, U^u), \quad \Ic_t^{\sigma,u} =  \Ik^{\sigma}(u,t,W^u_{. \wedge t},U^u), \quad  \Ic_t^{f,u} = \Ik^{f}(u,t,W^u_{. \wedge t},U^u), \quad \Ic_T^{g,u} = \Ik^{g}(u,W^u_{. \wedge T},U^u),
    \end{align}
    \item [$\bullet$] the following norm is finite  
    \begin{align}
        \lVert \Ic \rVert_{\I} := \left(\int_I \E \Big[| \Ic_T^{g,u}|^2 + \int_{0}^{T} \big[| \Ic_t^{b,u}|^2 + | \Ic_t^{\sigma,u}|^2 + | \Ic_t^{f,u}|^2 \big] \d t  \Big] \d u\right)^{\frac{1}{2}} < + \infty.
    \end{align}
\end{enumerate}

    The aim of an input $\Ic$ is to be put in the dynamics  \eqref{eq : FBSDE equations LQC} with $\Ic^{b,u}$ and $\Ic^{\sigma,u}$ being plugged into the drift and the diffusion coefficients of the forward equation  while $\Ic^{f,u}$ and $\Ic^{g,u}$ into the the driver and the terminal condition of the backward equation.

\begin{Remark}
    Note that the space $\mathfrak{S}$ endowed with the norm $\lVert . \rVert_{\mathfrak{S}}$ is a Banach space which will allow us to construct an adequate contraction for existence and uniqueness in $\mathfrak{S}$ and therefore in $\Sc$.
\end{Remark}

\begin{Definition}\label{def : continuation BSDE}
    For any $\gamma \in [0,1]$ , $\xi = (\xi^u)_{u \in I}$ an admissible initial condition and an input $\Ic \in \mathbb{I}$, the collection of FBSDE 
    
\begin{equation}\label{eq : Continuation FBSDEs}
   \left\{\begin{aligned} \d X_t^u &= \left(\gamma b(u,\theta_t^u) + \Ic_t^{u,b} \right) \d t + \Big( \gamma \sigma(u,\theta_t^u) + \Ic_t^{u,\sigma}\Big) \d W_t^u \\
        \d Y_t^u &= -\bigg(\gamma \Big(\partial_x H(u,\Theta_t^u) + \int_{I} \mathbb{\tilde{E}}\Big[ \partial \frac{\delta }{\delta m}H(\tilde{u},\tilde{\Theta}_t^{\tilde{u}})(u,X_t^u)\Big] \d \tilde{u} \Big) + \Ic_t^{f,u}\bigg) \d t + Z_t^u \d W_t^u \end{aligned}
        \right.
\end{equation} 
with 
\begin{align}\label{eq : control form}
    \alpha_t^u  = \hat{\mra}(u,X_t^u, \P_{X_t^.},Y_t^u,Z_t^u)
\end{align} 
as optimality condition, $X_0^u = \xi^u$ as initial condition and 
\begin{align}
    Y_T^u = \gamma \left(\partial_x g(u,X_T^u,\P_{X_T^.}) + \int_{I} \mathbb{\tilde{E}}\left[ \partial \frac{\delta }{\delta m}g(\tilde{u},\tilde{X}_T^{\tilde{u}})(u,X_T^u)\right] \d \tilde{u} \right) + I_T^{g,u}\
\end{align}
as terminal condition is referred to as $\Ec(\gamma,\xi,\Ic)$.

\noindent When $(X,Y,Z) \in \Sc$  is a solution in the sense of Definition \ref{def : solution FBSDEs}, the full process $\Theta= (\Theta_t^u)_{u \in I, 0 \leq t \leq T}$ is referred to as the extended solution.
\end{Definition}

\begin{Remark}
    When $\gamma = 1$ and $\Ic \equiv 0$ , the equation \eqref{eq : Continuation FBSDEs} coïncides with the equation \eqref{eq : FBSDE equations LQC}.
\end{Remark}

\begin{Definition}\label{def : Test}
    For any $\gamma \in [0,1]$, we will say  that $(S_{\gamma})$ holds if  for any  $\xi = (\xi^u)_{u \in I}$ an admissible initial condition  and $\Ic \in \I$, $\Ec(\gamma,\xi,\Ic)$ admits a unique extended solution  in $\mathfrak{S}$.
\end{Definition}

\begin{Lemma}\label{eq : Control Difference Theta}
    Let $\gamma \in [0,1]$ such that $(S_{\gamma})$ holds. There exists a constant  $C$ that does not depend on $\gamma$ such that for any $\xi=(\xi^u)_u$, $\bar{\xi} = (\bar{\xi}^u)_{u \in I}$ admissible initial conditions and $\Ic,\bar{\Ic} \in \mathbb{I}$, the respective extended solutions $\Theta$ and $\bar{\Theta}$ of $\mathcal{E}(\gamma,\xi,\Ic)$ and $\mathcal{E}(\gamma,\bar{\xi},\bar{\Ic})$ satisfy 
    \begin{align}
        \lVert \Theta - \bar{\Theta} \rVert_{\mathfrak{S}} \leq  C \Big( \big(\int_{I} \mathbb{E} \big[ |\xi^u - \bar{\xi}^u |^2 \big] \d u\big)^{\frac{1}{2}} + \lVert \Ic - \bar{\Ic} \rVert_{\I}  \Big)\;.
    \end{align}
\end{Lemma}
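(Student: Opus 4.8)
The plan is to run the a priori estimate of the continuation method (as in Chapter 6 of \cite{carmona_probabilistic_2018}), adapted to the non-exchangeable setting by systematically integrating the duality pairing over the label $u\in I$ and transporting the measure-derivative terms by Fubini relabelings $u\leftrightarrow\tilde u$. Write $\Delta X=X-\bar X$, $\Delta Y=Y-\bar Y$, $\Delta Z=Z-\bar Z$, $\Delta\alpha=\alpha-\bar\alpha$ (with $\alpha^u_t=\hat{\mra}(u,X_t^u,\P_{X_t^\cdot},Y_t^u,Z_t^u)$ and similarly for $\bar\alpha$), $\Delta\xi=\xi-\bar\xi$ and $\Delta\Ic=\Ic-\bar\Ic$. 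First I would apply It\^o's formula to $t\mapsto\int_I\E[\Delta X_t^u\cdot\Delta Y_t^u]\,\d u$ on $[0,T]$. Using the dynamics of $\Ec(\gamma,\xi,\Ic)$ and $\Ec(\gamma,\bar\xi,\bar\Ic)$, this produces a duality identity expressing $\int_I\E[\Delta X_T^u\cdot\Delta Y_T^u]\,\d u-\int_I\E[\Delta\xi^u\cdot\Delta Y_0^u]\,\d u$ as the running integral of $\gamma\mathcal{B}_t^u$ plus the input cross terms $\Delta Y_t^u\cdot\Delta\Ic_t^{b,u}+\Delta Z_t^u:\Delta\Ic_t^{\sigma,u}-\Delta X_t^u\cdot\Delta\Ic_t^{f,u}$, where $\mathcal{B}$ gathers the differences of $b,\sigma$ and of $\partial_xH$, $\partial\frac{\delta}{\delta m}H$ between the two solutions.

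Next I would evaluate the two convexity inputs. For the terminal pairing, writing the convexity inequality for $g$ (Assumption \ref{Assumption : Existence and Uniqueness}(iv)) at the two solutions with the roles exchanged, integrating over $u$, and relabeling $u\leftrightarrow\tilde u$ in the measure term (legitimate because $(\tilde X,\dots)$ is an independent copy and $u\mapsto\P_{(X^u,\bar X^u)}$ is measurable by Remark \ref{rmk : Measurability of X,Y,Z}) yields $\int_I\E[\Delta X_T^u\cdot\Delta Y_T^u]\,\d u\ge\int_I\E[\Delta X_T^u\cdot\Delta\Ic_T^{g,u}]\,\d u$. For the running term the key point is that the \emph{affine} form of $b,\sigma$ in $(x,\mu,a)$ (Assumption \ref{Assumption : Existence and Uniqueness}(i)) makes the $Y,Z$-linear parts of $\partial_xH(\Theta)-\partial_xH(\bar\Theta)$ cancel exactly against $\Delta Y\cdot\Delta b+\Delta Z:\Delta\sigma$, and the $b_1,\sigma_1$ couplings cancel against the measure term after the same relabeling, leaving only the $f$-gradients; the $\lambda$-convexity of $f$ together with the optimality of $\alpha,\bar\alpha$ (which forces $\Delta\alpha\cdot(\partial_aH(\Theta)-\partial_aH(\bar\Theta))\le0$) then gives $\int_I\E[\mathcal{B}_t^u]\,\d u\le-2\lambda\int_I\E[|\Delta\alpha_t^u|^2]\,\d u$. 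Combining with the terminal bound produces the coercive estimate
\[
2\gamma\lambda\int_I\E\!\int_0^T|\Delta\alpha_t^u|^2\,\d t\,\d u\;\le\;\int_I\E[\Delta\xi^u\cdot\Delta Y_0^u]\,\d u-\int_I\E[\Delta X_T^u\cdot\Delta\Ic_T^{g,u}]\,\d u+\int_I\E\!\int_0^T\!\big[\Delta Y_t^u\cdot\Delta\Ic_t^{b,u}+\Delta Z_t^u:\Delta\Ic_t^{\sigma,u}-\Delta X_t^u\cdot\Delta\Ic_t^{f,u}\big]\,\d t\,\d u,
\]
whose right-hand side is bilinear in data and solution and hence bounded, by Cauchy--Schwarz and Young, by $C(\|\Delta\xi\|_{L^2}^2+\|\Delta\Ic\|_{\I}^2)+\delta\,(\|\Delta X\|^2+\|\Delta Y\|^2+\|\Delta Z\|^2)$.

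In parallel I would establish the standard forward and backward stability estimates, with constants independent of $\gamma$ since $\gamma\le1$ and every feedback of $\Delta\alpha$ carries a factor $\gamma$: by BDG and Gr\"onwall, the coupling being absorbed through the $L^2(I)$-norm, $\|\Delta X\|^2\le C(\|\Delta\xi\|^2+\gamma^2\|\Delta\alpha\|^2+\|\Delta\Ic^b\|^2+\|\Delta\Ic^\sigma\|^2)$ and $\|\Delta Y\|^2+\|\Delta Z\|^2\le C(\gamma^2\|\Delta X\|^2+\gamma^2\|\Delta\alpha\|^2+\|\Delta\Ic^f\|^2+\|\Delta\Ic^g\|^2)$, using the Lipschitz bounds of Assumption \ref{Assumption : Existence and Uniqueness}(ii)--(iii). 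Writing $a=\|\Delta\alpha\|^2$, $b=\|\Delta X\|^2+\|\Delta Y\|^2+\|\Delta Z\|^2$, $D=\|\Delta\xi\|_{L^2}^2+\|\Delta\Ic\|_{\I}^2$, the coercive estimate reads $\gamma\lambda a\le C_1D+\delta b$ and the stability estimates combine to $b\le C_2D+C_3\gamma^2a$; substituting and choosing $\delta$ small yields first $\gamma a\le C_5D$, then $b\le C_2D+C_3\gamma(\gamma a)\le C_6D$ \emph{uniformly} in $\gamma$ (here $\gamma\le1$ is used crucially). Finally, the $\lambda$-strong convexity of $H$ in $a$ (Remark \ref{rmk : Convexity of H}) together with the Lipschitz continuity of $\partial_af$ forces $\hat{\mra}$ to be Lipschitz in $(x,\mu,y,z)$, whence $a\le C_4b\le C_4C_6D$, again uniformly in $\gamma$; summing the bounds on $a$ and $b$ gives $\|\Theta-\bar\Theta\|_{\mathfrak{S}}^2\le C\,D$ and the claim follows.

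The main obstacle is the running-term computation: reducing $\int_I\E[\mathcal{B}]\,\d u$ to $-2\lambda\|\Delta\alpha\|^2$ requires the exact algebraic cancellation between the adjoint drift and the It\^o cross-variation, which is available only because $b,\sigma$ are affine and because the $\partial\frac{\delta}{\delta m}$-couplings can be transported by the $u\leftrightarrow\tilde u$ relabeling, and one must keep scrupulous track of the factors of $\gamma$ through these cancellations so that the final constant is genuinely $\gamma$-independent. A secondary but necessary point is the Lipschitz property of the minimizer $\hat{\mra}$, which is not among the stated hypotheses but follows from the strong convexity and is what upgrades the weak bound $\gamma a\le C_5 D$ to a uniform control of the control component in $\|\cdot\|_{\mathfrak{S}}$.
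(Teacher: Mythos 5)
Your proposal follows essentially the same route as the paper: the Itô duality identity for $\int_I\E[\Delta X_T^u\cdot\Delta Y_T^u]\,\d u$, the convexity of $g$ and the $\lambda$-convexity of $H$ applied with the roles of the two solutions exchanged and summed to produce the coercive bound $2\gamma\lambda\|\Delta\alpha\|^2\le\cdots$, then Young's inequality and the standard forward/backward stability estimates (with $\gamma\le1$ ensuring $\gamma$-independent constants), and finally absorption by choosing the Young parameter small. The one place where you are more careful than the paper is the control component of the $\|\cdot\|_{\mathfrak{S}}$-norm: the coercive estimate only controls $\gamma\|\Delta\alpha\|^2$, which degenerates as $\gamma\to0$, and your use of the Lipschitz continuity of the minimizer $\hat{\mra}$ (a consequence of the $\lambda$-strong convexity and the Lipschitz bounds on $\partial_a f$) to bound $\|\Delta\alpha\|^2$ directly by the $(X,Y,Z)$-differences is the right way to close that gap, which the paper's final step leaves implicit.
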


\begin{proof}
   The proof relies on similar arguments to those used in the derivation of the Pontryagin maximum principle. 
   Using Itô's formula, we have 
    \begin{align}
        \mathbb{E}[(\bar{X}_T^u - X_T^u) \cdot Y_T^u] &= \mathbb{E} [(\bar{\xi}^u - \xi^u) \cdot  Y_0^u] \notag \\
        &\hspace{0.4 cm}- \left(\mathbb{E} \Big[\int_{0}^{T} (\bar{X}_t^u - X_t^u) \cdot  \mathcal{I}_t^{f,u} + (\mathcal{I}_t^{b,u} - \bar{\mathcal{I}}_t^{{b},u}) \cdot Y_t^u + (\mathcal{I}_t^{\sigma,u} - \bar{\mathcal{I}}_t^{\sigma,u}) :Z_t^u \d t \Big] \right) \notag \\
        &\hspace{0.4 cm}- \gamma \left( \mathbb{E} \Big[\int_{0}^{T}    \partial_x H(u,t,\Theta_t^u) \cdot (\bar{X}_t^u - X_t^u) + \int_{I} \mathbb{\tilde{E}}\left[ \partial \frac{\delta }{\delta m}H(\tilde{u},t,\tilde{\Theta}_t^{\tilde{u}})(u,X_t^u)\right] \d \tilde{u} \cdot (\bar{X}_t^u - X_t^u) \d t \Big] \right) \notag \\
        &\hspace{0.4 cm}+\gamma \left( \mathbb{E} \Big[\int_{0}^{T}  \left[b(u,t,\bar{\theta}_t^u) - b(u,t,\theta_t^u)\right] \cdot Y_t^u + \left[\sigma(t,u,\bar{\theta}_t^u) - \sigma(t,u,\theta_t^u)\right] :Z_t^u \d t \Big]\right) \notag \\
        &=T_0^u - T_1^u - \gamma T_2^u ,
    \end{align}
where we set 
\begin{align}
    T_0^u &:= \mathbb{E}[(\bar{\xi}^u - \xi^u) \cdot  Y_0^u], \notag \\
    T_1^u &:= \mathbb{E} \Big[\int_{0}^{T} (\bar{X}_t^u - X_t^u) \cdot  \Ic_t^{f,u} + (\Ic_t^{b,u} - \bar{\Ic}_t^{{b},u}) \cdot Y_t^u + (\Ic_t^{\sigma,u} - \bar\Ic_t^{{\sigma},u}) :Z_t^u \d t \Big], \notag \\
    T_2^u &:= \left( \mathbb{E} \Big[\int_{0}^{T}    \partial_x H(u,t,\Theta_t^u) \cdot (\bar{X}_t^u - X_t^u) + \int_{I} \mathbb{\tilde{E}}\left[ \partial \frac{\delta }{\delta m}H(\tilde{u},t,\tilde{\Theta}_t^{\tilde{u}})(u,X_t^u)\right] \d \tilde{u} \cdot (\bar{X}_t^u - X_t^u) \d t \Big] \right)  \notag \\
        &\hspace{0.6 cm}- \left( \mathbb{E} \Big[\int_{0}^{T}  \left[b(u,t,\bar{\theta}_t^u) - b(u,t,\theta_t^u)\right] \cdot Y_t^u + \left[\sigma(t,u,\bar{\theta}_t^u) - \sigma(t,u,\theta_t^u)\right] : Z_t^u] \d t ]\right).
\end{align}
Now, from the definition of $Y_T^u$ in Definition \ref{def : continuation BSDE} and following the convex hypothesis for $g$, we have with similar arguments as done in the proof of Theorem \ref{Theorem : Sufficient condition}  
\begin{align}\label{eq : convex hypothesis g}
    \mathbb{E}[(\bar{X}_T^u - X_T^u) \cdot Y_T^u] &= \gamma \mathbb{E}\left[\left(\partial_x g(u,X_T^u,\P_{X_T^.}) + \int_{I} \mathbb{\tilde{E}}\left[ \partial \frac{\delta }{\delta m}g(\tilde{u},\tilde{X}_T^{\tilde{u}})(u,X_T^u)\right] \d \tilde{u} \right) \cdot (\bar{X}_T^u - X_T^u)\right] \notag \\
    &\hspace{0.4 cm}+ \E[(\bar{X}_T^u - X_T^u) \cdot \Ic_T^{u,g}]\notag \\
    &\leq \gamma \mathbb{E}[g(u,\bar{X}_T^u,\P_{\bar{X}_T^.}) - g(u,X_T^u,\P_{X_T^.})] + \E[(\bar{X}_T^u - X_T^u) \cdot \Ic_T^{u,g}].
\end{align}
Using the convexity property of the Hamiltonian we get by a similar way to the proof of Theorem \ref{Theorem : Sufficient condition}
\begin{align}\label{eq : Inequality J}
    \gamma J(\bar{\alpha}) - \gamma J(\alpha) \geq  \int_{I} \Big[\gamma \lambda \mathbb{E} \big[\int_{0}^{T} | \alpha_t^u - \bar{\alpha}_t^u|^2 \d t\big] + T_0^u -T_2^u + \E \big[(X_T^u - \bar{X}_T^u) \cdot \Ic_T^{u,g} \big] \Big]  \d u\;.
\end{align}
Exchanging  $\alpha$ and $\bar{\alpha}$, denoting by $\bar{T}_0^u$ $\bar{T}_1^u$ and $\bar{T}_2^u$ the corresponding terms in the decomposition of  $\mathbb{E}[( X_T^u - \bar{X}_T^u) \cdot \bar Y_T^u]$ 
and summing both inequalities \eqref{eq : Inequality J}, we have 
\begin{align}
    \int_{I} \Big[2 \gamma \lambda \mathbb{E} \big[\int_{0}^{T} | \alpha_t^u - \bar{\alpha}_t^u|^2 \d t \big] + T_0^u + \bar{T}_0^u -(T_2^u + \bar{T}_2^u) - \mathbb{E}\big[(\mathcal{I}_T^{u,g} - \mathcal{I}_T^{u,\bar{g}}) \cdot (X_T^u - \bar{X}_T^u) \big] \Big] \d u \leq 0.
\end{align}
We notice that we have
\begin{align}
    T_0^u + \bar{T}_0^u = - \mathbb{E}[(\xi^u - \bar{\xi}^u) \cdot (Y_0^u - \bar{Y}_0^u)]\;.
\end{align}
Similarly for the sum $T_2^u + \bar{T}_2^u$, we have 
\begin{align}
    T_2^u + \bar{T}_2^u  =\mathbb{E}\left[\int_{0}^{T} -  (\Ic_t^{f,u}-\bar\Ic_t^{{f},u}) \cdot  \left(X_t^u - \bar{X}_t^u \right) + (\Ic_t^{b,u} - \bar\Ic_t^{{b},u}) \cdot  \left(Y_t^u-\bar{Y}_t^u\right) + (\Ic_t^{\sigma,u} - \bar\Ic_t^{{\sigma},u}) : \left(Z_t^u - \bar{Z}_t^u \right) \d t \right]\;.
\end{align}
Applying Young's inequality, there exists a constant $C$ that does not depend on $\gamma$ such that 
\begin{equation}\label{eq : Estimate 1 Proof Lemma FBSDE}
    \int_{I} \gamma  \mathbb{E} \Big[\int_{0}^{T} \big|\alpha_t^u - \bar{\alpha}_t^u \big|^2 \d t \Big] \d u \leq \epsilon \lVert \Theta - \bar{\Theta} \rVert^2_{\mathfrak{S}} + \frac{C}{\epsilon} \left( \int_{I} \mathbb{E} \big[|\xi^u - \bar{\xi}^u|^2 \big] \d u + \lVert \Ic - \bar{\Ic} \rVert^2_{\I} \right),
\end{equation}
for any $\epsilon > 0$. From standard estimates of FBSDE,  there exists a constant $C$, that does not depend of $\gamma$ such that 
\begin{equation}\label{eq : Estimate 2 Proof Lemma FBSDE}
    \int_{I} \mathbb{E} \Big[\underset{0 \leq t \leq T}{\text{ sup }} \big|Y_t^u-\bar{Y}_t^u \big|^2 + \int_{0}^{T} \big|Z_t^u - Z_t^u \big|^2 \d t \Big] \d u \leq C \gamma \int_{I} \mathbb{E}[\underset{0 \leq t \leq T}{\text{ sup}} \Big[ \big|X_t^u - \bar{X}_t^u \big|^2 + \int_{0}^{T} \big|\alpha_t^u - \bar{\alpha}_t^u \big|^2 \d t \Big] \d u + C \lVert \mathcal{I} - \bar{\mathcal{I}} \rVert^2,
\end{equation}
and 
\begin{equation}\label{eq : Estimate 3 Proof Lemma FBSDE}
    \int_{I} \mathbb{E}[\underset{0 \leq t \leq T}{\text{ sup}} |X_t^u - \bar{X}_t^u|^2] \d u \leq \int_{I} \mathbb{E}[|\xi^u - \bar{\xi}^u|^2] \d u  + C \gamma \int_{I} \mathbb{E}[\int_{0}^{T} |\alpha_t^u - \bar{\alpha}_t^u|^2] \d t \d u + C \lVert \mathcal{I} - \bar{\mathcal{I}} \rVert^2.
\end{equation}
Combining equations \eqref{eq : Estimate 1 Proof Lemma FBSDE}, \eqref{eq : Estimate 2 Proof Lemma FBSDE} and \eqref{eq : Estimate 3 Proof Lemma FBSDE},we deduce as $\gamma \leq 1$
\begin{align}
    \int_{I} \mathbb{E}\left[\underset{0 \leq t \leq T}{\text{ sup}} |X_t^u - \bar{X}_t^u|^2 + \underset{0 \leq t \leq T}{\text{ sup}} |Y_t^u - \bar{Y}_t^u|^2 + \int_{0}^{T} |Z_t^u-\bar{Z}_t^u|^2 \d t\right] \d u \notag \\
    \leq C \gamma \int_{I} \mathbb{E}[\int_{0}^{T} |\alpha_t^u - \bar{\alpha}_t^u|^2 \d t] \d u + C \left(\int_{I} \mathbb{E}[|\xi^u - \bar{\xi}^u|^2 ] \d u + \lVert \mathcal{I} - \mathcal{\bar{I}} \rVert^2 \right)  \notag \\
    \leq C \epsilon \lVert \Theta - \bar{\Theta} \rVert^2_{\mathfrak{S}} + \frac{C}{\epsilon} \Big(\mathbb{E}[ \int_{I} |\xi^u - \bar{\xi}^u|^2] \d u + \lVert \mathcal{I} - \bar{\mathcal{I}} \rVert^2 \Big).
\end{align}
Now,  using again \eqref{eq : Estimate 1 Proof Lemma FBSDE}, we get 

\begin{align}
    \lVert \Theta - \bar{\Theta} \rVert^2_{\mathfrak{S}} \leq C \epsilon  \lVert \Theta - \bar{\Theta} \rVert^2_{\mathfrak{S}} +  \frac{C}{\epsilon} \Big(\mathbb{E}[ \int_{I} |\xi^u - \bar{\xi}^u|^2] \d u + \lVert \mathcal{I} - \bar{\mathcal{I}} \rVert^2 \Big).
\end{align}
Choosing $\epsilon$ small enough gives the  result.
\end{proof}

\begin{Lemma}\label{Lemma : Induction S_gamma}
    There exists $\delta_0 > 0$ such that if  $(S_{\gamma})$ is verified for some $\gamma \in [0,1)$, then $(S_{\gamma + \eta})$ is also verified for any $\eta \in (0,\delta_0]$ satisfying $\gamma + \eta \leq 1$.
\end{Lemma}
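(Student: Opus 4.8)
The plan is to establish $(S_{\gamma+\eta})$ by the Peng--Wu continuation (fixed-point) argument of Chapter 6 of \cite{carmona_probabilistic_2018-1}, using as its engine the $\gamma$-independent a priori estimate of Lemma \ref{eq : Control Difference Theta}. Fix an admissible initial condition $\xi=(\xi^u)_u$ and an input $\Ic\in\I$; I must produce a unique extended solution of $\Ec(\gamma+\eta,\xi,\Ic)$ in $\mathfrak{S}$. The idea is to absorb the extra increment $\eta$ of the coupling coefficients into the input, frozen along a guess, and then solve the resulting problem by the solvability hypothesis $(S_\gamma)$.

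Concretely, I would define a map $\Phi:\mathfrak{S}\to\mathfrak{S}$ as follows. Given a guess $\Theta'=(X'^u,\P_{X'^\cdot},Y'^u,Z'^u,\alpha'^u)\in\mathfrak{S}$, write $\theta'^u_t=(X'^u_t,\P_{X'^\cdot_t},\alpha'^u_t)$ and $\Theta'^u_t=(X'^u_t,\P_{X'^\cdot_t},Y'^u_t,Z'^u_t,\alpha'^u_t)$, and set
\begin{align*}
\Ic^{b,u,\Theta'}_t&=\eta\, b(u,\theta'^u_t)+\Ic^{b,u}_t,\qquad \Ic^{\sigma,u,\Theta'}_t=\eta\,\sigma(u,\theta'^u_t)+\Ic^{\sigma,u}_t,\\
\Ic^{f,u,\Theta'}_t&=\eta\Big(\partial_x H(u,\Theta'^u_t)+\int_I\tilde{\E}\big[\partial \tfrac{\delta}{\delta m}H(\tilde u,\tilde\Theta'^{\tilde u}_t)(u,X'^u_t)\big]\d\tilde u\Big)+\Ic^{f,u}_t,\\
\Ic^{g,u,\Theta'}_T&=\eta\Big(\partial_x g(u,X'^u_T,\P_{X'^\cdot_T})+\int_I\tilde{\E}\big[\partial \tfrac{\delta}{\delta m}g(\tilde u,\tilde X'^{\tilde u}_T)(u,X'^u_T)\big]\d\tilde u\Big)+\Ic^{g,u}_T.
\end{align*}
I then let $\Phi(\Theta')$ be the unique extended solution of $\Ec(\gamma,\xi,\Ic^{\Theta'})$, which exists in $\mathfrak{S}$ by $(S_\gamma)$. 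One checks $\Ic^{\Theta'}\in\I$: its functional representation follows from the representations of $X',Y',Z',\alpha'$ and the measurability of $b,\sigma,\partial_xH,\partial\tfrac{\delta}{\delta m}H$ (the copy-expectation integrates to a deterministic function of $(u,t,x)$ evaluated at $x=X'^u_t$), while finiteness of $\lVert\Ic^{\Theta'}\rVert_{\I}$ follows from the at most linear growth of these coefficients together with $\Theta'\in\mathfrak{S}$. The crucial point is that any fixed point $\Theta^*=\Phi(\Theta^*)$ solves $\Ec(\gamma+\eta,\xi,\Ic)$: at $\Theta'=\Theta^*$ the frozen $\eta$-terms, including the evaluation point $(u,X'^u)$ of the mean-field derivatives, coincide with the $\gamma$-terms evaluated along the solution, so $\gamma(\cdot)+\eta(\cdot)=(\gamma+\eta)(\cdot)$ in the forward coefficients and in the backward driver and terminal condition, while the optimality relation $\alpha^{*,u}_t=\hat{\mra}(u,X^{*,u}_t,\P_{X^{*,\cdot}_t},Y^{*,u}_t,Z^{*,u}_t)$ is inherited from the definition of $\Ec(\gamma,\cdot)$.

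It remains to make $\Phi$ a contraction for $\eta$ small, uniformly in $\gamma$. For two guesses $\Theta',\bar\Theta'$ sharing the same $\xi$, Lemma \ref{eq : Control Difference Theta} gives a $\gamma$-independent constant $C$ with $\lVert\Phi(\Theta')-\Phi(\bar\Theta')\rVert_{\mathfrak{S}}\le C\,\lVert\Ic^{\Theta'}-\Ic^{\bar\Theta'}\rVert_{\I}$. Next I bound the input difference: since $b,\sigma$ are linear (Assumption \ref{Assumption : Existence and Uniqueness}(i)) hence globally Lipschitz in $(x,\mu,a)$, since $\partial_x H=b_2(u)^\top y+\sigma_2(u)^\top z+\partial_x f$ is Lipschitz by boundedness of $b_2,\sigma_2$ and Assumption \ref{Assumption : Existence and Uniqueness}(ii), and since the mean-field derivative terms of $H$ and $g$ are Lipschitz by Assumption \ref{Assumption : Existence and Uniqueness}(iii), one gets a $\gamma$- and $\eta$-independent constant $C'$ with
\begin{align*}
\lVert\Ic^{\Theta'}-\Ic^{\bar\Theta'}\rVert_{\I}\le \eta\, C'\,\lVert\Theta'-\bar\Theta'\rVert_{\mathfrak{S}}.
\end{align*}
Here the control enters only as the coordinate $\alpha'$ of $\Theta'$, so $|\alpha'^u_t-\bar\alpha'^u_t|$ is controlled directly by $\lVert\Theta'-\bar\Theta'\rVert_{\mathfrak{S}}$ (no Lipschitz property of $\hat{\mra}$ is needed), and $\bd(\P_{X'^\cdot},\P_{\bar X'^\cdot})^2\le\int_I\E[|X'^u-\bar X'^u|^2]\,\d u$ via the trivial coupling, measurability of $u\mapsto\P_{(X'^u,\bar X'^u)}$ being granted by Remark \ref{rmk : Measurability of X,Y,Z}. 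Combining, $\Phi$ is $CC'\eta$-Lipschitz on $\mathfrak{S}$.

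Choosing $\delta_0:=1/(2CC')$, which depends only on the $\gamma$-independent constants $C,C'$, we obtain that for every $\eta\in(0,\delta_0]$ with $\gamma+\eta\le1$ the map $\Phi$ is a contraction on the Banach space $\mathfrak{S}$, hence admits a unique fixed point $\Theta^*$; by the observation above it is an extended solution of $\Ec(\gamma+\eta,\xi,\Ic)$, and conversely any extended solution is a fixed point, so it is the unique one. Since $\xi$ and $\Ic$ were arbitrary, $(S_{\gamma+\eta})$ holds. I expect the main obstacle to be the input estimate $\lVert\Ic^{\Theta'}-\Ic^{\bar\Theta'}\rVert_{\I}\le\eta C'\lVert\Theta'-\bar\Theta'\rVert_{\mathfrak{S}}$ with $C'$ uniform in $\gamma$ — and in particular the bookkeeping that the frozen evaluation point $(u,X'^u)$ in the mean-field derivative terms is consistent, at the fixed point, with the evaluation $(u,X^u)$ appearing in $\Ec(\gamma,\cdot)$ — together with the verification $\Ic^{\Theta'}\in\I$; the uniformity of $\delta_0$ is then immediate from the $\gamma$-independence of the constant in Lemma \ref{eq : Control Difference Theta}.
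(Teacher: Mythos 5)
Your proposal is correct and follows essentially the same route as the paper: freeze the $\eta$-increment of the coefficients along a guess $\Theta'$ to build an input $\Ic^{\Theta'}\in\I$, solve $\Ec(\gamma,\xi,\Ic^{\Theta'})$ via $(S_\gamma)$, and conclude by the contraction mapping theorem using the $\gamma$-independent stability constant of Lemma \ref{eq : Control Difference Theta}. Your write-up is in fact slightly more explicit than the paper's on the input-difference estimate $\lVert\Ic^{\Theta'}-\Ic^{\bar\Theta'}\rVert_{\I}\le\eta C'\lVert\Theta'-\bar\Theta'\rVert_{\mathfrak{S}}$, which the paper asserts directly.
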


\begin{proof}
    Let $\gamma$ such that $(S_{\gamma})$ holds and $\xi=(\xi^u)_u$ an admissible initial condition and $\Ic \in \I$. Then we can define the mapping $\Phi$ from $\mathfrak{S}$ to itself for which fixed points coincide with the solution of $\Ec(\gamma + \eta, \xi, \Ic)$. Let $\Phi$ be defined as follows. Let $\Theta \in \mathfrak{S}$. We denote by $\bar{\Theta}$ the extended solution of the  collection of FBSDE $\Ec(\gamma,\xi,\bar{\Ic})$ with :
    \begin{enumerate}
        \item [$\bullet$] $\bar{\Ic}_t^{u,b} = \eta b(u,\theta_t^u) + \Ic _t^{u,b}$,
        \item [$\bullet$] $\bar{\Ic}_t^{u,\sigma} = \eta \sigma(u,\theta_t^u) + \Ic _t^{u,\sigma}$,
        \item [$\bullet$] $\bar{\Ic}_t^{u,f} = \eta \left(\partial_x H(u,\Theta_t^u) + \int_{I} \mathbb{\tilde{E}}\left[ \partial \frac{\delta }{\delta m}H(\tilde{u},\tilde{\Theta}_t^{\tilde{u}})(u,X_t^u)\right] d\tilde{u} \right) + \Ic _t^{u,f}$,
        \item [$\bullet$] $\bar{\Ic}_T^{u,g} = \eta \left(\partial_x g(u,X_T^u,\P_{X_T^.}) + \int_{I} \mathbb{\tilde{E}}\left[ \partial  \frac{\delta }{\delta m}g(\tilde{u},\tilde{X}_T^{\tilde{u}},\P_{X_T^.})(u,X_T^u)\right] \d \tilde{u} \right) + \Ic _T^{u,g}$.
    \end{enumerate}
    Therefore, we need  to verify that as defined $\bar{\Ic}$ is also in $\I$. As $\Theta \in \mathfrak{S}$, we have the representation of $X_t^u, Y_t^u, Z_t^u$ and $\alpha_t^u$ by measurable functions of the form $.(u,t,W_{. \wedge t}, U^u)$ and similarly for the inputs $\Ic^{u,b},\Ic^{u,\sigma},\Ic^{u,f}$ and $\Ic_T^{u,g}$, which allows us to represent $\bar{\Ic}^{u,b}, \bar{\Ic}^{u,\sigma}, \bar{\Ic}^{u,f}$ and $\bar{\Ic}^{u,g}$ by suitable measurable functions of the form $.(u,t,W_{. \wedge t}, U^u)$. Moreover, from the hypothesis on $b,\sigma$, $f$ and $g$, the integrability assumption is verified and we have $\bar{\Ic} \in \I$. Now,  by assumption  the solution of $\Ec(\gamma,\xi,\bar{\Ic})$ is uniquely defined and belongs to $\mathfrak{S}$ so the mapping $\Phi$ is well defined. Moreover if $\Theta$ is a fixed point of $\Phi$, we clearly have that $\Theta$ is an extended solution of $\Ec(\gamma+ \eta,\xi,\Ic)$. Thus, it suffices to show that $\Phi$ is a contraction for $\eta$  small enough. From Lemma \ref{eq : Control Difference Theta},there exists a constant $C$ that does not depend on $\gamma$ such that 
    \begin{align}
        \lVert \bar{\Theta}^1 - \bar{\Theta}^2 \rVert   \leq C  \lVert \bar{\mathcal{I}}^1 - \bar{\mathcal{I}}^2 \rVert \leq C \eta  \lVert \Theta^1 - \Theta^2 \rVert,
    \end{align}
    which give the result by choosing $\eta$ small enough.
 \end{proof}
Now, we need to  show that $(S_{0})$ holds true which will end the proof of Theorem \ref{theorem : existence unicity FBSDEs} by iterating the Lemma \ref{Lemma : Induction S_gamma} as $\eta$ is independent of $\gamma$.

Let $\xi= (\xi^u)_u$ an admissible initial condition and $\Ic \in \I$.  We have to deal with the following collection of FBSDE. 
\begin{equation}\label{eq : FBSDEs S0}
   \left\{\begin{aligned} \d X_t^u &=  \Ic_t^{u,b}  \d t+ \Ic_t^{u,\sigma}  \d W_t^u,\\
        \d Y_t^u &=- \Ic_t^{f,u} \d t + Z_t^u \d W_t^u, 
        \end{aligned}
        \right.
\end{equation} 
with $X_0^u = \xi^u$ as initial condition and $Y_T^u = \Ic_T^{g,u}$ as terminal condition.
To end the proof, we have to show  the following 

\begin{enumerate}
    \item [$\bullet$] There exists $(\mrx,\mry,\mrz)$ measurable functions defined on $I \times [0,T] \times C^n_{[0,T]} \times (0,1) \to \R^d,\R^d,\R^{d \times n}$ such that $X_t^u =\mrx(u,t,W^u_{. \wedge t},U^u)$, $Y_t^u = \mry(u,t,W^u_{. \wedge t},U^u)$ and $Z_t^u = \mrz(u,t,W^u_{. \wedge t},U^u)$.
    \item [$\bullet$] Each process $X^u$ and $Y^u$ are continous and $Z^u$ is squared integrable.
    \item [$\bullet$] The norm $ \left(\int_{I} \mathbb{E}\bigg[\underset{t \in [0,T]}{\text{sup}} |X_t^u|^2 + \underset{t \in [0,T]}{\text{sup}} |Y_t^u|^2 + \int_{0}^{T} |Z_t^u|^2 \d t + \int_{0}^{T} |\hat{\alpha}_t^u|^2 \d t \bigg] \d u\right)^{\frac{1}{2}}  $ is finite.
\end{enumerate}

\begin{Remark}
    Note that the representation of each process $X^u,Y^u,Z^u$ will directly imply that $\alpha_t^u$ can be represented as a suitable function $\mra$ defined on $I \times [0,T] \times \Cc^n_{[0,T]} \times (0,1)$ because of $\alpha_t^u$ definition in \eqref{eq : control form}
\end{Remark}
As the FBSDEs are decoupled, we can solve it $u$ by $u$ and from classic theory, by defining the following processes 
\begin{align}
    X_t^u &= \xi^u + \int_{0}^{t} \Ic_s^{u,b} \d s + \int_{0}^{t} \Ic_s^{u,\sigma} \d W_s^u, \notag \\
    Y_t^u &= \E[ \Ic_{T}^{g,u} + \int_{t}^{T} \Ic_s^{f,u} \d s | \Fc_t^u],
\end{align}
and $Z_t^u$ being the martingale decomposition of the process $Y_t^u + \int_{0}^{t} \Ic_s^{f,u} \d s$ for $t\in[0,T]$. By classic theory and from standard estimates and using the fact that $\Ic \in \I$ and assuming for now that $u \mapsto \P_{(X^u,Y^u,Z^u)}$ is measurable,  we have the mesurability of $u \mapsto \P_{(X^u,Y^u,Z^u,\hat{\alpha}^u)}$ due to $\hat{\alpha}$ definition, the finiteness of the norm  $\int_{I} \mathbb{E}\Big[\underset{t \in [0,T]}{\text{sup}} |X_t^u|^2 + \underset{t \in [0,T]}{\text{sup}} |Y_t^u|^2 + \int_{0}^{T} |Z_t^u|^2 \d t + \int_{0}^{T} |\alpha_t^u|^2 \d t \Big] \d u < + \infty$ and $X^u,Y^u$ continuous and $Z^u$ squared integrable. For the integrability condition on $\alpha$, it follows directly from Lemma \ref{Lemma : Optimizer information}, the Lipschitz property of $\partial_{a}f$ and the local boundedness of $\partial_{a} f$.

Now, we need to show that there exists measurable functions $\mrx$, $\mry$ and $\mrz$ such that we have a suitable representation of $X_t^u$,$Y_t^u$ and $Z_t^u$ which will also provide the measurability of the mapping $u \mapsto \P_{(X^u,Y^u,Z^u)}$.

\begin{proof}
\mbox{}\\

\textbf{Required property for $X$ :}

By assumption, there exists $\xi, \Ik^{b}$ and $\Ik^{\sigma}$ such that we have 
\begin{align}\label{eq : Representation X}
    X_t^u = \xi(u,U^u) + \int_{0}^{t} \Ik^{b}(u,s,W^u_{. \wedge s},U^u) \d s  + \int_{0}^{t} \Ik^{\sigma}(u,s,W^u_{. \wedge s},U^u) \d W^u_s\;,\quad t\in[0,T]\;.
\end{align}
Now, the result follows as the process $(\int_{0}^{t} \Ik^{\sigma}(u,s,W^u_{. \wedge s},U^u) \d W^u_s)_{0 \leq t \leq T}$ is $\F^u$-adapted s.t the stochastic integral $\int_{0}^{t} \Ik^{\sigma}(u,s,W^u_{. \wedge s},U^u) \d W^u_s$ is hence measurable with respect to $\Fc_t^u$. Moreover, from the limit approximation of the stochastic integral and due to the Borel measurability in $u$ of $\Ik^{\sigma}$, it  can therefore be rewritten as $\tilde \Ik^\sigma(u,t,W^u_{. \wedge t}, U^u)$  for a measurable  map $\tilde \Ik^\sigma$ defined on $I \times [0,T] \times \Cc^n_{[0,T]} \times (0,1) $. Then, the function $\mrx$ defined   as 
\begin{align}
    \mrx(u,t,w,z) := \xi(u,z) + \int_{0}^{t} \Ik^{b}(u,s,w_{. \wedge s},z) \d s + \tilde \Ik^\sigma(u,t,w_{. \wedge t},z),
\end{align}
satisfies the required representation form for $X$. 

\vspace{0.2 cm}

\noindent \textbf{Required property for $Y$ and $Z$ :}

 We now prove the required property for $Y$ and $Z$. To this end, we introduce the collection of processes $\tilde{Y} = (\tilde{Y}_t^u)_{u \in I, 0 \leq t \leq T}$ defined by 

\begin{align}
    \tilde{Y}_t^u :=  \E \Big[  \Ic_T^{g,u} + \int_{0}^{T} \Ic_s^{f,u} \d s |\Fc_t^u \Big],
\end{align}
for $t\in[0,T]$ and $u\in I$. $\tilde Y^u$ is a continuous integrable martingale with respect to $\F^u$ and we can rewrite it as
\begin{align}
    \tilde{Y}_t^u &= \E\Big[ \Ic_T^{g,u}+ \int_{t}^{T} \Ic_s^{f,u} \d s  | \Fc_t^u\Big] + \int_{0}^{t}\Ic_s^{f,u} \d s \notag \\ 
    &= \E[\Ik^g_T(u,W^u_{.\wedge T},U^u) | \Fc_t^u] + \E \Big[ \int_{t}^{T} \Ik^{f}(u,s,W^u_{. \wedge s},U^u) \d s | \Fc_t^u \Big] + \int_{0}^{t} \Ik^{f}(u,s,W^u_{. \wedge s},U^u) \d s, 
\end{align}
Since the process $(W^u_{(s \wedge T) \vee t} - W^u_t)_{0 \leq s \leq T}$ is independant from $\Fc_t^u$, we can write following the proof of Theorem 2.1 in \cite{de2024mean} and defining a measurable function $\tilde{\Ik}^g_T$ defined on $I \times \Cc^n_{[0,t]} \times \Cc^n_{[t,T]} \times (0,1) \times \R^d$ as 

\begin{align}
    \tilde{\Ik}^g_T(u,\tilde{w},\hat{w},z) := \Ik^g_T(u,\tilde{w} \oplus \hat{w} ,z),
\end{align}
we have
\begin{align}
   \E[\Ik^g_T(u,W^u,U^u) | \Fc_t^u]&= \E[\tilde{\Ik}_T^g(u,W^u_{. \wedge t}, (W^u_{(s \wedge T) \vee t} - W^u_t)_{s \in [0,T]},U^u) | \Fc_t^u ] \notag \\
    &= \int_{\Cc^d_{[0,T]}} \tilde{\Ik}_T^g(u,W^u_{. \wedge t}, \omega_{(. \wedge T) \vee t} - \omega_t,U^u) \d\W_T(\omega),
\end{align}
Therefore, the function defined as $\mry(u,t,w,z) := \int_{\Cc^n_{[0,T]}} \tilde{\Ik}_T^g(u,w_{. \wedge t},w',z) \d \W_T(w')$ gives the expected result form for $\E[\Ik^g_T(u,W^u,U^u) | \Fc_t^u]$. Applying the same argument to $ \E \Big[ \int_{t}^{T} \Ik^{f}(u,s,W^u_{. \wedge s},U^u) \d s | \Fc_t^u \Big]$, we end up showing the required form for $Y$ since

\begin{align}\label{eq : Representation Y}
    Y_t^u = \tilde{Y}_t^u - \int_{0}^{t} \Ik^{f}(u,s,W^u_{. \wedge s},U^u) \d s \;,\quad t\in[0,T]\;.
\end{align}

We now prove that the process $(Z_t^u)_{t \geq 0}$ satisfies the required representation property. 
Let $h_n = \frac{1}{n}$. We then have  
\begin{align}
    \frac{1}{h_n} \int_{t- h_n}^t Z_s^u \d s = \frac{1}{h_n} \langle Y_t^u, W_t^u \rangle_n,
\end{align}
where $\langle .,. \rangle_n$ denotes the covariation over the interval $[t - \frac{1}{n},t]$. From Theorem 7.20 in \cite{rogers_diffusions_2000}, we have $ \frac{1}{h_n} \int_{t- h_n}^t Z_s^u \d s \to Z_t^u$ $\P-a.s$ which ends the proof due to the definition of the covariation as the limit of suitable functions $\phi^n$ which have the required form due to the representation of $Y$ we just proved. Therefore, the proof is now complete.
\end{proof}

\section{Application : Linear-Quadratic graphon mean field control}

This section is devoted to an application of the Theorem \ref{theorem : existence unicity FBSDEs} in the LQ case with an application in finance through  systemic risk for heterogeneous banks.

We consider the same LQ-NEMF model as introduced in \cite{de2025linear}, where the state process is given by a collection $X = (X^u)_u$ indexed by $u \in I = [0,1]$, and  controlled through  $\alpha = (\alpha^u)_{u }$ taking values in $A = \mathbb{R}^m$. For simplicity, we assume that the diffusion term is uncontrolled. The dynamics of the system are then given by

\begin{align}
    \d X_t^u &= \Big[\beta^u  + A^u X_t^u  + \int_{I} G_{A}(u,v) \E[X_t^v] \d v  + B^u \alpha_t^u \Big] \d t 
         + \gamma^u  \d W_t^u , \quad t \in [0,T], \notag \\
    X_0^u &= \xi^u, u \in I,
\end{align}
 where $\beta \in L^{\infty}(I ; \R^d)$, $\gamma \in L^{\infty}(I ; \R^{d \times n})$, $A \in L^{\infty}(I,\R^{d \times d}), B \in L^{\infty}(I; \R^{d \times m}), G_A \in L^{\infty}(I \times I ; \R^{d \times d})$, $W^u$ is an $\R^n$-valued standard brownian motion and $\xi = (\xi^u)_u$ is  an admissible initial condition and 
the associated cost functional is given by 

\begin{align}\label{eq : Cost Functional LQ form 1}
    J(\alpha) &:= \int_{I} \E \Big[ \int_{0}^{T} \big[ Q^u \big( X_t^u  - \int_{I} \tilde{G}_{Q}(u,v) \E[X_t^v] \d v \big) \cdot  \big( X_t^u  - \int_{I} \tilde{G}_{Q}(u,v) \E[X_t^v] \d v \big)    \ \notag \\
    &\quad  \quad+  \alpha_t^u \cdot  R^u \alpha_t^u + 2  \alpha_t^u \cdot \Gamma^u X_t^u   + 2 \alpha_t^u \cdot \int_{I} G_I(u,v) \E[X_t^v] \d v \big] \d t \notag \\
    &\quad \quad +  P^u\big(X_T^u - \int_{I} \tilde{G}_{P}(u,v) \E[X_T^v] \d v \big) \cdot  \big(X_T^u - \int_{I} \tilde{G}_{P}(u,v) \E[X_T^v] \d v \big)  \Big] \d u 
\end{align}
where $Q,P \in L^{\infty}(I ; \S^d_+)$, $\Gamma \in L^{\infty}(I; \R^{m \times d})$, $R \in L^{\infty}(I;\S^m_{> +}), \tilde{G}_Q,\tilde{G}_P \in L^{\infty}(I \times I, \R^{d \times d})$ and $G_I \in L^{\infty}(I \times I , \R^{m \times d})$. as done in \cite{de2025linear}, we assume  without  loss of generality that  
\begin{align}
    \tilde{G}_Q(u,v) = \tilde{G}_Q(v,u)^{\top}, \quad \tilde{G}_P(u,v) = \tilde{G}_P(v,u)^{\top}.
\end{align}
for anu $u,v\in I$. In this setting, we easily check that Assumptions \ref{assumptionbasic} and \ref{Assumption : Existence and Uniqueness} are satisfied. This ensures existence and uniqueness for the Pontryagin collection of FBSDE in the space $\Sc$ according to Theorem \ref{theorem : existence unicity FBSDEs}.

From \eqref{eq : Cost Functional LQ form 1} and the definition of $H$ given in \eqref{eq : Hamiltonien}, we have 
\begin{align}
    H(u,x,\mu,y,z,a) &= \Big(\beta^u + A^u x + \int_{I} G_{A}(u,v) \bar{\mu}^v \d v + B^u a \Big) \cdot y + \gamma^u:z \notag \\
    &\quad+  Q^u \big(x - \int_{I} \tilde{G}_{Q}(u,v )  \bar{\mu}^v dv \big) \cdot \big(x - \int_{I} \tilde{G}_{Q}(u,v) \bar{\mu}^v \d v \big) \notag \\
    &\quad + a \cdot R^u a + 2 a \cdot \Gamma^ux + 2a \cdot  \int_{I} G_I(u,v) \bar{\mu}^v \d v, \notag 
\end{align}
for $(u,x,\mu,y,z,a) \in I \times \mathbb{R}^{d} \times L^2 \big(\mathcal{P}^{2}(\mathbb{R}^{d})\big) \times \mathbb{R}^{d} \times \mathbb{R}^{d \times n} \times \mathbb{R}^{m}$. Since $R \in L^{\infty}(I;\S^m_{> +})$, the Hamiltonian  $H$ is  strictly convex in $a$. Therefore, it admits a unique minimizer $\hat{\mra}(u,x,\mu,y,z)$ given by  
\begin{align}
    \hat{\mra}(u,x,\mu,y,z) = - \frac{1}{2}(R^u)^{-1} \left[(B^u)^{\top}y  + 2 \Gamma^u x + 2 \int_{I} G_{I}(u,v) \bar{\mu}^v \d v \right].
\end{align}
Theorefore, FBSDE \eqref{eq : FBSDE equations LQC} reads 
\begin{align}\label{eq : FBSDE LQ remplacé}
     \left\{\begin{aligned}\d X_t^u &=\Big(\beta^u + \big[A^u  - B^u (R^u)^{-1} \Gamma^u \big]X_t^u - \frac{1}{2} B^u (R^u)^{-1} (B^u)^{\top} Y_t^u 
    \notag \\
    &\quad + \int_{I} \Big[G_{A}(u,v)  - B^u(R^u)^{-1} G_I(u,v) \Big]\mathbb{E}[X_t^v] \d v \Big) \d t +\gamma^u  \d W_t^u, \notag \\
    \d Y_t^u &=  - \Big[C_Y^u Y_t^u + 2 C_X^u   X_t^u  + 2\int_{I}  \Phi_{X}(u,v)\E[X_t^v] \d v + \int_{I} \Phi_{Y}(u,v)  \mathbb{E}[Y_t^v] \d v \Big] \d t  \notag\\
    &\quad + Z_t^u \d W_t^u,
    \end{aligned}
    \right.
\end{align}
with initial  and terminal conditions given by 

\begin{align}
    X_0^u = \xi^u,  \text{ and } Y_T^u = 2 \big(P^u X_T^u  +  \int_{I} G_{P}(u,v) \E[X_T^v] \d v \big).
\end{align}
The coefficients $C_X,\;C_Y,\;\Phi_X,\;\Phi_Y\;G_Q$, and $G_P,$ are given by
\begin{equation}
\left\{
\begin{aligned}
    C_Y^u &= (A^u)^{\top} - (\Gamma^u)^{\top} (R^u)^{-1} (B^u)^{\top}, 
    \\
    C_X^u &= Q^u - (\Gamma^u)^{\top} (R^u)^{-1} \Gamma^u, \\
    \Phi_{X}(u,v) &= - (\Gamma^u)^{\top} (R^u)^{-1} G_I(u,v) 
    + \Big[ G_Q(u,v) - G_I(v,u)^{\top} (R^v)^{-1} \Gamma^v \Big] \\
    &\quad - \int_I G_I(w,u)^{\top} (R^w)^{-1} G_I(w,v) \, \mathrm{d}w, 
    \label{eq:Phi_definition} \\
    \Phi_{Y}(u,v) &= G_A(v,u)^{\top}  - G_I(v,u)^{\top} (R^v)^{-1} (B^v)^{\top}, \\
    G_{Q}(u,v) &= \int_{I} Q^w \tilde{G}_Q(w,u)\tilde{G}_Q(w,v) \, \mathrm{d}w 
    - (Q^u + Q^v)\tilde{G}_Q(u,v), \\
    G_P(u,v) &= \int_{I} P^w \tilde{G}_P(w,u)\tilde{G}_P(w,v) \, \mathrm{d}w 
    - (P^u + P^v) \tilde{G}_P(u,v)\;,
\end{aligned}
\right.
\end{equation}
for $u,v\in I$. 

For the LQ-NEMF, they show in \cite{de2025linear} that the value function defined on $[0,T] \times L^2 (\Pc^2(\R^d))$ takes the following form 
    \begin{align}
        v(t,\mu) = \int_I \int_{\R^d} x \cdot K_t^ux \mu^u(\d x) \d u  + \int_I \int_{I} \bar{\mu}^u \cdot \bar{K}_t(u,v) \bar{\mu}^v \d v \d u + 2 \int_I \Lambda_t^u \cdot  \bar{\mu}^u \d u  + \int_I R_t^u \d u,
    \end{align}
with $R \in \Cc^1\big([0,T] ; L^2(I;\R)\big)$. Based on the heuristic connection between the value function of the stochastic optimization problem and $(Y^u,Z^u)_{u \in I}$ as mentionned in Appendix \ref{Appendix : Master Equation FBSDE system}, 
we are now looking for an  ansatz for $Y_t^u$ in the following form 

\begin{equation}\label{eq : Ansatz Y}
    Y_t^u = 2  \big( K_t^u   X_t^u + \int_{I}  \bar{K}_t(u,v) \E[X_t^v] \d v  + \Lambda_t^u \big),
\end{equation}
where $K \in C^1 \big([0,T]; L^{\infty}(I;\S^d_+)\big)$, $\bar{K} \in C^1 \big([0,T],L^2(I \times I; \R^{d \times d})\big)$ and $\Lambda \in C^1 \big([0,T]; L^{2}(I; \R^d)\big)$ are to be determined through Riccati equations.
 Plugging the  ansatz \eqref{eq : Ansatz Y} in the dynamics of $X^u$ lead to a linear SDE that can be explicitly solved. Then, identifying the $X_t^u$ term with the dynamics of $ Y_t^u$ leads to the following Ricatti equation for $K_t^u$  
\begin{align}\label{eq : Standard Ricatti}
 \dot{K}_t^u + \Phi^u(K_t^u) -  M^u(K_t^u)^{\top} (R^u)^{-1} M^u(K_t^u) &= 0, ~t\in[0,T]\;,\notag \\
   K_T^u &= P^u,
\end{align}
where we set
\begin{align}\label{eq : Simplification calculs 1}
    \Phi^u(\kappa) &:= (A^u)^{\top} \kappa  + \kappa A^u + Q^u,\notag \\
    M^u(\kappa)&:=  (B^u)^{\top} \kappa + \Gamma^u  .
\end{align}
for all $\kappa \in \S^d_+$.
\begin{Remark}
    This Riccati is similar to the standard Riccati obtained in \cite{de2025linear} (without the diffusion coefficient that we omit for simplicity) for which we can show existence and uniqueness $u$ by $u$ as done in \cite{de2025linear}.
\end{Remark}

Now, after some tedious but straightforward computations, we get the following non-standard  Ricatti equation for $\bar{K}_t(u,v)$ :
\begin{align}\label{eq : Non Standard Ricatti K}
    \dot{\bar{K_t}}+ \Psi(u,v)\big(K_t,\bar{K_t}) - M^u(K_t^u)^{\top} (R^u)^{-1}V^1(u,v)(K_t,\bar{K}_t) &\notag \\ - V^2(v,u)^{\top}(K_t,\bar{K}_t)(R^v)^{-1}M^v(K_t^v) 
    - \int_I V^2(w,u)^{\top} (R^w)^{-1}V^1(w,v) \d w  & = 0,~t\in[0,T]\;,\\  
    \bar{K}_T(u,v) & = G_H(u,v).
\end{align}
where 
$K_t$ is solution of the Riccati equation \eqref{eq : Standard Ricatti}, 
$M^u$ as defined in \eqref{eq : Simplification calculs 1} and 
\begin{align}\label{eq : Simplification Calculs 2}
    \Psi(u,v)(K_t,\bar{k})&:= K_t^u G_A(u,v) + G_A(v,u)^{\top} K_t^v + (A^u)^{\top} \bar{k}(u,v) + \bar{k}(u,v) A^v + \int_I \bar{k}(u,w) G_A(w,v)  \d w  \notag \\
    &\quad+ \int_I G_A(w,u)^{\top} \bar{k}(w,v) \d w + G_Q(u,v), \notag \\
    V^1(u,v)&:=(B^u)^{\top}\bar{k}(u,v)  + G_I(u,v), \notag \\
    V^2(u,v)&:= (B^u)^{\top} \bar{k}(v,u)^{\top} + G_I(u,v),
\end{align}
 for $\bar{k} \in L^2( I \times I ; \R^{d \times d})$ and $u,v\in I$.

Finally, we get the following non-standard Ricatti equation for $\Lambda_t^u$
\begin{align}\label{eq : Ricatti Lambda}
    \dot{\Lambda}_t^u  + K_t^u \beta^u  + \int_I \bar{K}_t(u,v) \beta^v \d v- M^u(K_t^u)^{\top} (R^u)^{-1}(B^u)^{\top} \Lambda_t^u& \notag \\
    + \int_I G_A(v,u)^{\top} \Lambda_t^v \d v  + \int_I V^2(v,u)(\bar{K}_t)^{\top} (R^v)^{-1}(B^v)^{\top} \Lambda_t^v \d v & = 0,~t\in[0,T], \notag \\
    \Lambda_T^u& = 0,
\end{align}
where we used the definition of $M^u$ and  $V^2$ introduced respectively in  \eqref{eq : Simplification calculs 1} and \eqref{eq : Simplification Calculs 2}.

\begin{Remark}
    We again obtained the similar Riccati equation for $\bar{K}_t(u,v)$ and $\Lambda_t^u$ as in \cite{de2025linear}. Moreover, we can notice that we have a triangular system of Riccati equations in the sense that we need to solve $K_t^u$ to solve $\bar{K}_t(u,v)$ and then solve $\Lambda_t^u$. Moreover, it has been shown in \cite{de2025linear} the existence and uniqueness for the triplet $(K,\bar{K},\Lambda)$ relying to some nonlinear functional analysis.
\end{Remark}

\begin{Remark}\label{eq : Special Case Lambda coeff}
    Note that from the existence and uniqueness of a solution, we have in the special case of $\beta^u = 0$ for all $u \in I$ that the Ricatti equation for $\Lambda$ \eqref{eq : Ricatti Lambda} gives $\Lambda_t^u = 0$ for all  $t \in [0,T]$ and  $u \in I$.
\end{Remark}

\begin{Remark}
    Note that the choice of our ansatz gives the following form for  $Z_t^u =  2 K_t^u \gamma^u$ for all $t \in [0,T]$ and $u \in I$.
\end{Remark}

Now that we have the existence and uniqueness of the triplet $(K,\bar{K},\Lambda)$ and due to the measurability of the coefficients $K^u$, $\bar{K}(u,v)$ and $\Lambda^u$, we just need to check that the following collection of SDE

\begin{align}
\d X_t^u &=\bigg(\beta^u - B^u (R^u)^{-1} (B^u)^{\top} \Lambda_t^u + \Big[A^u - B^u (R^u)^{-1} \big((B^u)^{\top} K_t^u + \Gamma^u \big)\Big] X_t^u \notag \\
        &\quad +\int_{I} \Big[G_{A}(u,v) - B^u(R^u)^{-1} \big((B^u)^{\top} \bar{K}_t(u,v) + G_I(u,v)\big)\Big] \mathbb{E}[X_t^v] \d v \bigg) \d t \notag \\
    &\quad + \gamma^u  \d W_t^u,
\end{align}
is uniquely solvable for almost every $u \in I$ with the process $X^u$ continuous and such that there exists a function $\mrx : I  \times [0,T] \times \Cc^n_{[0,T]} \times (0,1)$ with $X_t^u = \mrx(u,t,W^u_{. \wedge t},U^u)$
with the integrability condition $\int_I \E \Big[ \underset{t \in [0,T]}{\sup} |X_t^u|^2 \Big] du < + \infty$ to ensure that the collection of control $\alpha$ defined by 

\begin{align}\label{eq : Optimal control LQ}
    \alpha_t^u = - (R^u)^{-1} \bigg[ \Big((B^u)^{\top}K_t^u + \Gamma^u \Big) X_t^u +  \int_I \Big((B^u)^{\top} \bar{K}_t(u,v )+G_I(u,v)\Big)\E[X_t^v] \d v + (B^u)^{\top} \Lambda_t^u \bigg]
\end{align}
is optimal.
However, the proof of existence and uniqueness  is the same as the proof of Theorem (2.1) in \cite{de2025linear} with a straightforward extension to the case of continuous time-dependent coefficients as discussed in their Remark (2.1). The main difference compared to our current framework lies in the form of the initial condition $\xi = (\xi^u)_{u \in I}$. However, using a simple induction argument  and relying on the measurability requirements of the model coefficients in the label $u$, we obtain  the desired form for $X_t^u$ as an appropriate Borel measurable function $\mrx$ of $(u, t, W^u_{. \wedge t}, U^u)$ similarly to the proof of Theorem \ref{THMexistUniqX}.

\begin{Remark}
    Note that we recover the exact same form of optimal control \eqref{eq : Optimal control LQ} as in  \cite{de2025linear} in their Theorem (5.1).
\end{Remark}

\appendix

\renewcommand{\thesection}{\Alph{section}}

\section{An heuristic link between the Master equation and the FBSDE system}\label{Appendix : Master Equation FBSDE system}

The ansatz form \eqref{eq : Ansatz Y} in the LQ case can be justified through the heuristic link between the adjoint equations $(Y,Z)=(Y^u,Z^u)_{u \in I}$ and the value function of the stochastic optimization problem related to \eqref{eq : Fonctionnelle Coût}. Indeed, as introduced in \cite{de2025linear} (Corollary 3.1), the value function can be written as 

\begin{align}
    v(t,\mu) = \underset{\alpha \in \mathcal{A}}{\text{ inf }} \Big\lbrace \int_{I} \E \Big[ \int_{t}^{\theta} f(u,X_s^{t,\xi,\alpha,u},\P_{X_s^{t,\xi,\alpha,}},\alpha_s^u) \d s\Big]  + v \big(\theta,\P_{X_{\theta}^{t,\xi,\alpha,.}} \big) \Big\rbrace,
\end{align}
for any $\theta \in [t,T]$ and any  $\xi$ admissible initial condition satisfying $\P_{\xi^u} = \mu^u$ for almost every $u \in I$.
As done in \cite{carmona_probabilistic_2018-1} and under the existence of an optimal control, we can expect to have the following master equation

\begin{align}
    Y_t^u =\Uc(u,t,X_t^u,\P_{X_t^.} \big),
\end{align}
for $(u,t,x,\mu) \in I \times [0,T] \times \R^d \times L^2\big(\Pc_2(\R^d)\big)$ and where the mapping $\Uc$ is defined as 

\begin{align}\label{eq : Heuristic Derivation}
    \Uc(u,t,x,\mu) := \partial \frac{\delta}{\delta m}v(t,\mu)(u,x).
\end{align}
In the LQ-NEMF model, as shown in \cite{de2025linear}, the value function has the form 

\begin{align}\label{eq : LQ value function form}
    v(t,\mu) &= \int_{I} \int_{\R^d} x \cdot K_t^u x \mu^u(dx) \d u +\int_{I} \int_{I} \bar{\mu}^u \cdot K_t(u,v)\bar{\mu}^v  \d v \d u  \notag \\
    &\hspace{0.4 cm}+  2 \int_{I} \Lambda_t^u \cdot \bar{\mu}^u \d u  + \int_I R_t^u \d u,
\end{align}
where $K$, $\bar{K}$, $\Lambda$ and $R$ are solution to a triangular Riccati system. Taking the gradient of the flat derivative for \eqref{eq : LQ value function form} justifies the  choice of ansatz \eqref{eq : Ansatz Y} for $Y$.

\bibliographystyle{plain}

\end{document}